\pgfplotsset{compat=newest}
\definecolor{mycolor1}{rgb}{0.46600,0.67400,0.18800}%
\definecolor{mycolor2}{rgb}{0.30100,0.74500,0.93300}%
\DeclarePairedDelimiter\floor{\lfloor}{\rfloor}
\newcommand{\R}{\mathbb{R}}
\newcommand{\Z}{\mathbb{Z}}
\newcommand{\N}{\mathbb{N}}
\newcommand{\eps}{\varepsilon}
\newcommand{\fhi}{\varphi}
\newcommand{\weak}{\rightharpoonup}
\newcommand{\vertiii}[1]{{\left\vert\kern-0.25ex\left\vert\kern-0.25ex\left\vert #1
    \right\vert\kern-0.25ex\right\vert\kern-0.25ex\right\vert}}
\def\calO{\mathcal{O}}
\def\calT{\mathcal{T}}
\def\XXint#1#2#3{{\setbox0=\hbox{$#1{#2#3}{\int}$}
     \vcenter{\hbox{$#2#3$}}\kern-.5\wd0}}
\numberwithin{equation}{section}
\crefname{equation}{}{}
\crefname{assumption}{Assumption}{Assumptions}
\crefname{rmrk}{Remark}{Remarks}
\newtheorem{dfntn}{Definition}[section] 
\newtheorem{rmrk}{Remark}[section]    
\newtheorem{thrm}{Theorem}[section]     
\newtheorem{lmm}{Lemma}[section]
\newtheorem{cor}{Corollary}[section]
\begin{document}

\title[Homogenization of nondivergence-form equations with Cordes coefficients]{Homogenization of nondivergence-form elliptic equations with discontinuous coefficients and finite element approximation of the homogenized problem}

\author[T. Sprekeler]{Timo Sprekeler}
\address{National University of Singapore, Department of Mathematics, 10 Lower Kent Ridge Road, Singapore 119076, Singapore.}
\email{timo.sprekeler@nus.edu.sg}

\subjclass[2010]{35B27, 35J15, 65N12, 65N30}
\keywords{Homogenization, nondivergence-form elliptic PDE, Cordes condition, finite element methods}
\date{\today}

\begin{abstract}
We study the homogenization of the equation $-A(\frac{\cdot}{\eps}):D^2 u_{\eps} = f$ posed in a bounded convex domain $\Omega\subset \R^n$ subject to a Dirichlet boundary condition and the numerical approximation of the corresponding homogenized problem, where the measurable, uniformly elliptic, periodic and symmetric diffusion matrix $A$ is merely assumed to be essentially bounded and (if $n>2$) to satisfy the Cordes condition. In the first part, we show existence and uniqueness of an invariant measure by reducing to a Lax--Milgram-type problem, we obtain $L^2$-bounds for periodic problems in double-divergence-form, we prove homogenization under minimal regularity assumptions, and we generalize known corrector bounds and results on optimal convergence rates from the classical case of H\"{o}lder continuous coefficients to the present case. In the second part, we suggest and rigorously analyze an approximation scheme for the effective coefficient matrix and the solution to the homogenized problem based on a finite element method for the approximation of the invariant measure, and we demonstrate the performance of the scheme through numerical experiments.  
\end{abstract}

\maketitle

\section{Introduction}

In this paper, we discuss the homogenization of the linear elliptic nondivergence-form problem
\begin{align}\label{intro 1}
\begin{split}
-A\left(\frac{\cdot}{\eps}\right):D^2 u_{\eps} &= f\quad\text{in }\Omega,\\ u_{\eps} &= g\quad\text{on }\partial\Omega,
\end{split}
\end{align}
where $\eps > 0$ is a small parameter, $\Omega\subset \R^n$ is a bounded convex domain, $f\in L^2(\Omega)$, $g\in H^2(\Omega)$, and $A\in L^{\infty}(\R^n;\R^{n\times n}_{\mathrm{sym}})$ is $\Z^n$-periodic, uniformly elliptic, and (if $n>2$) satisfies the Cordes condition \eqref{Cordes} which dates back to \cite{Cor56}. In this setting, it is known that there exists a unique solution $u_{\eps}\in H^2(\Omega)$ to \eqref{intro 1}; see \cite{SS13,Tal65}. We study homogenization of the problem \eqref{intro 1} as well as the numerical approximation of the corresponding homogenized problem.

The theory of periodic homogenization for \eqref{intro 1} is classical and well-understood if $A$ is H\"{o}lder continuous and $f,g$ and $\partial\Omega$ are sufficiently regular. In this case, it is known that as $\eps\searrow 0$ we have that $(u_{\eps})_{\eps > 0}$ converges in a suitable sense to the solution of the homogenized problem, i.e., the linear elliptic constant-coefficient problem
\begin{align}\label{intro u}
\begin{split}
-\bar{A}:D^2 u &= f\quad\text{in }\Omega,\\ u &= g\quad\text{on }\partial\Omega,
\end{split}
\end{align}
where the effective coefficient matrix $\bar{A} := \int_Y rA$ is obtained by integrating $A$ against an invariant measure $r$, defined as the solution to the periodic problem
\begin{align}\label{intro r}
-D^2:(rA) = 0\quad\text{in }Y,\qquad r \text{ is $Y$-periodic},\qquad \int_Y r = 1,
\end{align}
where $Y:=(0,1)^n$; see e.g., \cite{AL89,BLP11,ES08,JKO94}. Further, optimal convergence rates and corrector bounds in suitable norms are available in the literature; see e.g., \cite{GST22,GTY20,KL16,ST21}. For recent developments in stochastic homogenization of nondivergence-form problems, we refer to \cite{AFL22,AL17,AS14,GT22,GT23} and the references therein. It seems that the case of measurable diffusion matrices that are merely essentially bounded has not been studied yet and our first goal of this paper is to generalize various qualitative and quantitative homogenization results to this setting.

Concerning the development of numerical methods to approximate the solution to \eqref{intro 1}, it is well-known that, for small positive $\eps$, standard finite element methods require a very fine mesh that resolves the oscillations of the coefficients to deliver accurate approximations. Therefore, multiscale finite element methods for nondivergence-form problems have been developed in recent years; see \cite{CSS20,FGP23} for linear problems and \cite{GSS21,KS22} for Hamilton--Jacobi--Bellman (HJB) and Isaacs problems. For some results regarding finite difference schemes we refer to \cite{CM09,FO18,FO09}.

In particular, \cite{FGP23} suggests a multiscale finite element scheme based on the methodology of localized orthogonal decomposition (LOD) for linear nondivergence-form equations with essentially bounded measurable coefficients satisfying the Cordes condition. In \cite{CSS20}, a multiscale finite element scheme for \eqref{intro 1} with $A\in W^{1,q}_{\mathrm{per}}(Y;\R^{n\times n}_{\mathrm{sym}})$, $q>n$, and satisfying the Cordes condition has been suggested which is based on an approximation of the effective coefficient matrix via a finite element approximation of the invariant measure, relying on rewriting \eqref{intro r} in divergence-form. Since the latter is not possible in our setting, the second goal of this paper is to provide and analyze a finite element method for the approximation of the invariant measure in our case of merely essentially bounded measurable coefficients that satisfy the Cordes condition. 

The structure of this paper is as follows.

In Section \ref{Sec: Framework}, we provide the framework, covering the main assumptions, well-posedness of \eqref{intro 1}, and a uniform $H^2$-bound for the solution to \eqref{intro 1}. In Section \ref{Sec: Hom}, we study homogenization of \eqref{intro 1}. We begin by introducing a bilinear form, a coercivity property, and useful estimates that are used throughout the paper in Section \ref{Sec: Prelim}. In Section \ref{Sec: Per problems}, we analyze linear periodic problems in double-divergence-form and nondivergence-form. In particular, we find that any diffusion matrix $A$ that satisfies the assumptions stated below \eqref{intro 1} has a unique invariant measure $r\in L^2_{\mathrm{per}}(Y)$ and that 
\begin{align}\label{r form}
r = C \frac{\mathrm{tr}(A)}{\lvert A\rvert^2} (1-\Delta \psi),
\end{align}
where $C$ is a positive constant and $\psi$ is the unique solution to a Lax--Milgram-type problem in the subspace of $H^2_{\mathrm{per}}(Y)$ consisting of functions with mean zero (see Theorem \ref{Thm: invmeas}). In Section \ref{Sec: conv result}, we show that $(u_{\eps})_{\eps>0}$ converges weakly in $H^2(\Omega)$ to the unique solution $u\in H^2(\Omega)$ to the homogenized problem \eqref{intro u}, using the transformation argument from \cite{AL89}. Thereafter, in Sections \ref{Sec: corrector est} and \ref{Sec: Type}, we obtain $H^2$ corrector estimates and generalize some results on type-$\eps^2$ diffusion matrices (see Definition \ref{Def: type}) obtained in \cite{GST22} for H\"{o}lder continuous diffusion matrices to our setting.

In Section \ref{Sec: Num methods}, we study the numerical approximation of the homogenized problem via a novel finite element method for the approximation of the invariant measure based on \eqref{r form}. We perform a rigorous error analysis and demonstrate the theoretical results in numerical experiments provided in Section \ref{Sec: Exp}. In Section \ref{Sec: Pfs}, we collect the proofs of the results contained in this work.

\section{Framework}\label{Sec: Framework}

Let $\Omega\subset \R^n$ be a bounded convex domain. Let $f\in L^2(\Omega)$ and $g\in H^2(\Omega)$. For a small parameter $\eps>0$, we consider the problem 
\begin{align}\label{ueps problem}
\begin{split}
L_{\eps}u_{\eps}:=-A^{\eps}:D^2 u_{\eps} &= f\quad\text{in }\Omega,\\ u_{\eps} &= g\quad\text{on }\partial\Omega,
\end{split}
\end{align}
where $A^{\eps}:=A\left(\frac{\cdot}{\eps}\right)$ and $A\in \mathcal{M}(\lambda,\Lambda)$ for some constants $\lambda,\Lambda>0$. Here, we define 
\begin{align*}
\mathcal{M}(\lambda,\Lambda):=\left\{A\in L^{\infty}(\R^n;\R^{n\times n}_{\mathrm{sym}})\,\left\lvert A\text{ is $Y$-periodic and }\forall\xi\in \R^n\backslash\{0\}:\lambda \leq \frac{A\xi\cdot\xi}{\lvert \xi\rvert^2}\leq\Lambda\text{ a.e. in }\R^n\right.\right\},
\end{align*}
where $Y:= (0,1)^n$. We further assume that $A$ satisfies the Cordes condition (dating back to \cite{Cor56}), that is,
\begin{align}\label{Cordes}
\exists\,\delta\in (0,1]:\qquad\frac{\lvert A\rvert^2}{(\mathrm{tr}(A))^2} \leq \frac{1}{n-1+\delta}\quad\text{a.e. in }\R^n,
\end{align}
where $\lvert A\rvert:=\sqrt{A:A}$. Note that this is no restriction for dimensions $n\leq 2$:
\begin{rmrk}\label{Rk: Cordes in 1D,2D}
Let $A\in \mathcal{M}(\lambda,\Lambda)$ for some $\lambda,\Lambda > 0$. If $n\in\{1,2\}$, the Cordes condition \eqref{Cordes} holds:
\begin{itemize}
\item[(i)] If $n = 1$, we have that \eqref{Cordes} holds with $\delta = 1$.
\item[(ii)] If $n = 2$, we have that \eqref{Cordes} holds with $\delta = \frac{\lambda}{\Lambda}$. Indeed, this can be seen by writing $\lvert A\rvert^2$ as sum of the squared eigenvalues of $A$ and $\mathrm{tr}(A)$ as sum of the eigenvalues of $A$, and using that $\frac{s^2 + t^2}{(s+t)^2}\leq(1+\frac{s}{t})^{-1}$ for any $0<s\leq t$.
\end{itemize}
\end{rmrk}

The Cordes condition \eqref{Cordes} guarantees the existence of a function $\gamma\in L^{\infty}_{\mathrm{per}}(Y)$ such that $\gamma > 0$ and $\lvert \gamma A -I_n\rvert \leq \sqrt{1-\delta}$ almost everywhere, where $I_n$ denotes the identity matrix in $\R^{n\times n}$.

\begin{rmrk}\label{Rk: Cordes holds}
Let $A\in \mathcal{M}(\lambda,\Lambda)$ for some $\lambda,\Lambda > 0$ and suppose that \eqref{Cordes} holds. Then, the function
\begin{align*}
\gamma:=\frac{\mathrm{tr}(A)}{\lvert A \rvert^2}\in L^{\infty}(\R^n)
\end{align*}
is $Y$-periodic, satisfies $\frac{\lambda}{\Lambda^2}\leq \gamma\leq \frac{\Lambda}{\lambda^2}$ almost everywhere, and there holds $\lvert \tilde{A} - I_n\rvert^2 = n-\frac{(\mathrm{tr}(A))^2}{\lvert A \rvert^2} \leq 1-\delta$ almost everywhere, where $\tilde{A}:=\gamma A$. Note that $\tilde{A}\in \mathcal{M}(\frac{\lambda^2}{\Lambda^2},\frac{\Lambda^2}{\lambda^2})$.
\end{rmrk} 
It is then known (see \cite{SS13,Tal65}) that \eqref{ueps problem} has a unique solution in $H^2(\Omega)$. Further, we can obtain a uniform bound on the $H^2$-norm of the solution.

\begin{thrm}[well-posedness and uniform $H^2$-bound]\label{Thm: Well-posedness of uepsproblem}
Let $\Omega\subset\R^n$ be a bounded convex domain. Let $A\in \mathcal{M}(\lambda,\Lambda)$,  $f\in L^2(\Omega)$, $g\in H^2(\Omega)$, and suppose that \eqref{Cordes} holds. Then, for any $\eps > 0$ there exists a unique solution $u_{\eps}\in H^2(\Omega)$ to \eqref{ueps problem}, and we have the bound
\begin{align}\label{uniform H2 bd}
\|u_{\eps}\|_{H^2(\Omega)}\leq C (\|f\|_{L^2(\Omega)}+\|g\|_{H^2(\Omega)})
\end{align}
for some constant $C = C(\mathrm{diam}(\Omega),\lambda,\Lambda,n,\delta) > 0$.
\end{thrm}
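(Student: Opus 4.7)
The plan is to adapt the Smears--S\"uli strategy \cite{SS13}, which combines the Cordes renormalization from \cref{Rk: Cordes holds} with the Miranda--Talenti inequality valid on convex domains. First, I would shift the boundary data: setting $v_\eps := u_\eps - g$, the problem \eqref{ueps problem} becomes
\[
-A^\eps : D^2 v_\eps = F_\eps := f + A^\eps : D^2 g \quad\text{in }\Omega, \qquad v_\eps = 0 \quad\text{on }\partial\Omega,
\]
with $\|F_\eps\|_{L^2(\Omega)} \leq \|f\|_{L^2(\Omega)} + \sqrt{n}\,\Lambda\,\|g\|_{H^2(\Omega)}$ uniformly in $\eps$. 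Next, writing $\gamma^\eps := \gamma(\cdot/\eps)$ with $\gamma = \mathrm{tr}(A)/|A|^2$ and $\tilde A^\eps := \gamma^\eps A^\eps$, I multiply through by $\gamma^\eps$ and reformulate the equation as
\[
-\Delta v_\eps = \gamma^\eps F_\eps + (I_n - \tilde A^\eps) : D^2 v_\eps,
\]
where, crucially, $\lvert I_n - \tilde A^\eps\rvert \leq \sqrt{1-\delta}$ almost everywhere in $\R^n$ by \cref{Rk: Cordes holds}.

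I would then introduce the map $T_\eps : H^2(\Omega) \cap H^1_0(\Omega) \to H^2(\Omega) \cap H^1_0(\Omega)$ which sends $w$ to the unique solution $v$ of $-\Delta v = \gamma^\eps F_\eps + (I_n - \tilde A^\eps) : D^2 w$ with zero trace; this is well-defined because the right-hand side lies in $L^2(\Omega)$ and $\Omega$ is a bounded convex (hence Lipschitz) domain, so by the Miranda--Talenti inequality $\|D^2 v\|_{L^2(\Omega)} \leq \|\Delta v\|_{L^2(\Omega)}$. Combining this with the pointwise Cordes bound yields, for any $w_1, w_2 \in H^2 \cap H^1_0$,
\[
\|D^2 (T_\eps w_1 - T_\eps w_2)\|_{L^2(\Omega)} \leq \|\Delta(T_\eps w_1 - T_\eps w_2)\|_{L^2(\Omega)} \leq \sqrt{1-\delta}\,\|D^2(w_1 - w_2)\|_{L^2(\Omega)},
\]
so $T_\eps$ is a strict contraction in the seminorm $\|D^2\cdot\|_{L^2(\Omega)}$, which is equivalent to the $H^2$-norm on $H^2 \cap H^1_0(\Omega)$ via Poincar\'e's inequality. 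Banach's fixed point theorem then supplies a unique fixed point $v_\eps$, and $u_\eps := v_\eps + g \in H^2(\Omega)$ is the unique solution to \eqref{ueps problem}.

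For the bound \eqref{uniform H2 bd}, taking $w_1 = v_\eps$, $w_2 = 0$ in the contraction estimate and using $\|\gamma^\eps\|_{L^\infty} \leq \Lambda/\lambda^2$ (\cref{Rk: Cordes holds}) yields
\[
\|D^2 v_\eps\|_{L^2(\Omega)} \leq \frac{1}{1-\sqrt{1-\delta}}\cdot \frac{\Lambda}{\lambda^2}\,\bigl(\|f\|_{L^2(\Omega)} + \sqrt{n}\,\Lambda\,\|g\|_{H^2(\Omega)}\bigr),
\]
and \eqref{uniform H2 bd} follows upon adding back $g$ and applying Poincar\'e's inequality. The only thing that needs genuine checking is the $\eps$-independence of all constants: the contraction factor $\sqrt{1-\delta}$, the renormalizer bound on $\gamma^\eps$, and the Miranda--Talenti constant each depend solely on $\lambda,\Lambda,n,\delta$ and $\mathrm{diam}(\Omega)$, since $\gamma^\eps$ is a rescaling of the periodic function $\gamma$ that preserves essential supremum. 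This is the mild technical point of the argument; otherwise, the proof mirrors the fixed-$\eps$ argument in \cite{SS13,Tal65}.
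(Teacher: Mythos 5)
Your argument is correct, and its skeleton (shift by $g$, renormalize by $\gamma^\eps$, exploit $\lvert I_n-\tilde A^\eps\rvert\le\sqrt{1-\delta}$ together with the Miranda--Talenti inequality on the convex domain) is exactly the one underlying the paper's proof. The only real difference is that the paper black-boxes the core well-posedness step by citing Theorem~3 of \cite{SS13} — which is proved there via a Lax--Milgram/coercivity argument for the bilinear form $(w,v)\mapsto(\gamma^\eps A^\eps\!:\!D^2w,\Delta v)_{L^2(\Omega)}$, the same coercivity mechanism as in Lemma~\ref{Lmm: b_mu} — whereas you re-derive it via a Banach fixed-point iteration for the map $T_\eps$. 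Both routes produce the identical constant $C_\delta=(1-\sqrt{1-\delta})^{-1}$ multiplied by the Poincar\'e/Miranda--Talenti constant $\tilde c_0(\mathrm{diam}(\Omega),n)$, so the quantitative content is the same; the contraction argument is marginally more self-contained, while the Lax--Milgram formulation is the one that generalizes directly to the Galerkin discretizations used later in the paper. Two small points to tidy up: deducing $\|D^2v_\eps\|_{L^2(\Omega)}\le(1-\sqrt{1-\delta})^{-1}\|D^2T_\eps 0\|_{L^2(\Omega)}$ from ``$w_1=v_\eps$, $w_2=0$'' requires an intermediate triangle inequality $\|D^2v_\eps\|\le\|D^2T_\eps0\|+\|D^2(T_\eps v_\eps-T_\eps0)\|$, which you should state; and the well-definedness of $T_\eps$ rests on $H^2$-regularity of the Dirichlet Laplacian on bounded \emph{convex} (not merely Lipschitz) domains, so you should invoke convexity rather than Lipschitz regularity at that point.
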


\section{Homogenization}\label{Sec: Hom}

\subsection{Preliminaries}\label{Sec: Prelim}

Before we start with the main discussion we briefly highlight an important observation which will be crucial for this work. We write $L^2_{\mathrm{per},0}(Y):=\{\fhi\in L^2_{\mathrm{per}}(Y):\int_Y \fhi = 0\}$ and $H^k_{\mathrm{per},0}(Y):=\{\fhi\in H^k_{\mathrm{per}}(Y):\int_Y \fhi = 0\}$ for $k\in \N$.

\begin{lmm}[the bilinear form $b_{\mu}$]\label{Lmm: b_mu}
Let $A\in \mathcal{M}(\lambda,\Lambda)$ for some $\lambda,\Lambda > 0$ and suppose that \eqref{Cordes} holds. Let $\gamma\in L^{\infty}_{\mathrm{per}}(Y)$ be defined as in Remark \ref{Rk: Cordes holds}. Then, for any $\mu\geq 0$, the bilinear form
\begin{align*}
b_{\mu}: H^2_{\mathrm{per}}(Y)\times H^2_{\mathrm{per}}(Y)\rightarrow \R,\qquad b_{\mu}(\fhi_1,\fhi_2):=(\mu \fhi_1-\gamma A:D^2 \fhi_1,\mu \fhi_2 -\Delta \fhi_2)_{L^2(Y)}
\end{align*}
satisfies the bound
\begin{align*}
\mu^2\|\fhi\|_{L^2(Y)}^2 + 2\mu \|\nabla \fhi\|_{L^2(Y)}^2 + \|D^2 \fhi\|_{L^2(Y)}^2 =   \|\mu \fhi - \Delta \fhi\|_{L^2(Y)}^2\leq C_{\delta}\, b_{\mu}(\fhi,\fhi)\qquad\forall \fhi\in H^2_{\mathrm{per}}(Y),
\end{align*} 
where $C_{\delta}:=(1-\sqrt{1-\delta})^{-1}>0$. In particular, if $\mu > 0$ we have that $b_{\mu}$ is coercive on $H^2_{\mathrm{per}}(Y)$, and if $\mu = 0$ we have that $b_{\mu}$ is coercive on $H^2_{\mathrm{per},0}(Y)$.
\end{lmm}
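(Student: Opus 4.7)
The plan is to establish the identity first and then derive the inequality by treating $b_\mu(\varphi,\varphi)$ as a small perturbation of the square of $\|\mu\varphi - \Delta\varphi\|_{L^2}$ coming from the Cordes bound on $\tilde A - I_n$.

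For the identity $\mu^2\|\varphi\|^2 + 2\mu\|\nabla\varphi\|^2 + \|D^2\varphi\|^2 = \|\mu\varphi - \Delta\varphi\|^2$, I would expand the right-hand side and integrate by parts on the flat torus. The cross term gives $-2\mu(\varphi,\Delta\varphi)_{L^2} = 2\mu\|\nabla\varphi\|^2$ by a single integration by parts, using periodicity to kill the boundary terms. The remaining key ingredient is the Miranda--Talenti-type identity $\|\Delta\varphi\|_{L^2(Y)}^2 = \|D^2\varphi\|_{L^2(Y)}^2$, which on the torus is an equality obtained by writing $\int_Y \partial_{ii}\varphi\,\partial_{jj}\varphi$ and integrating by parts twice (again boundary terms vanish by periodicity). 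Strictly speaking this identity is classical for $H^2_{\mathrm{per}}(Y)$ but can be justified by approximating $\varphi$ by smooth periodic functions.

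For the coercivity estimate, let $u := \mu\varphi - \Delta\varphi$. Writing $\tilde A = \gamma A$ as in Remark \ref{Rk: Cordes holds}, I decompose
\begin{equation*}
\mu\varphi - \tilde A : D^2\varphi = u - (\tilde A - I_n):D^2\varphi,
\end{equation*}
so that $b_\mu(\varphi,\varphi) = \|u\|_{L^2(Y)}^2 - \bigl((\tilde A - I_n):D^2\varphi,\,u\bigr)_{L^2(Y)}$. Applying Cauchy--Schwarz pointwise and then in $L^2$, together with the Cordes bound $|\tilde A - I_n|\leq\sqrt{1-\delta}$ a.e., and using the identity from the previous step to bound $\|D^2\varphi\|_{L^2(Y)}\leq \|u\|_{L^2(Y)}$, the cross term is controlled by $\sqrt{1-\delta}\,\|u\|_{L^2(Y)}^2$. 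Rearranging gives $\|u\|_{L^2(Y)}^2\leq (1-\sqrt{1-\delta})^{-1} b_\mu(\varphi,\varphi) = C_\delta b_\mu(\varphi,\varphi)$, as required.

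The coercivity statements then follow immediately: if $\mu>0$, the left-hand side of the identity is equivalent (with constants depending on $\mu$) to $\|\varphi\|_{H^2(Y)}^2$, so $b_\mu$ is coercive on $H^2_{\mathrm{per}}(Y)$; if $\mu=0$, we are left with $\|D^2\varphi\|_{L^2(Y)}^2$, which on $H^2_{\mathrm{per},0}(Y)$ is equivalent to $\|\varphi\|_{H^2(Y)}^2$ via a twofold Poincaré--Wirtinger inequality (first from $\varphi$ having mean zero, then applied componentwise to $\nabla\varphi$, whose mean over $Y$ vanishes by periodicity). The only genuine obstacle is making sure the Miranda--Talenti identity is available without extra boundary regularity; on the torus this is cheap and purely formal via periodicity, which is why the argument works with merely essentially bounded $A$ and gives a bilinear form that will drive the double-divergence analysis of the invariant measure in the subsequent sections.
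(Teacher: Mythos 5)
Your argument is correct and follows essentially the same route as the paper: the same decomposition $b_\mu(\fhi,\fhi)=\|\mu\fhi-\Delta\fhi\|_{L^2(Y)}^2-\left((\gamma A-I_n):D^2\fhi,\,\mu\fhi-\Delta\fhi\right)_{L^2(Y)}$, the Cordes bound $\lvert\gamma A-I_n\rvert\leq\sqrt{1-\delta}$ via Cauchy--Schwarz, and the periodic identity $\|D^2\fhi\|_{L^2(Y)}=\|\Delta\fhi\|_{L^2(Y)}$ together with the expansion of $\|\mu\fhi-\Delta\fhi\|_{L^2(Y)}^2$ to absorb the cross term. No gaps; the concluding coercivity remarks match the paper's use of the Poincar\'e inequality on $H^2_{\mathrm{per},0}(Y)$.
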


The following inequalities will be used frequently:

\begin{rmrk}\label{Rk: norm on H^2_per,0}
For any $v\in  H^1_{\mathrm{per},0}(Y)$ we have the Poincar\'{e} inequality $\|v\|_{L^2(Y)}\leq \frac{\sqrt{n}}{\pi}\|\nabla v\|_{L^2(Y)}$; see e.g., \cite{Beb03}. Further, for any $\fhi\in H^2_{\mathrm{per}}(Y)$ we have the identity $\|D^2 \fhi\|_{L^2(Y)} = \|\Delta\fhi\|_{L^2(Y)}$. In particular, there holds $\frac{\pi^2}{n}\|\fhi\|_{L^2(Y)}\leq \frac{\pi}{\sqrt{n}}\|\nabla \fhi\|_{L^2(Y)}\leq \|D^2 \fhi\|_{L^2(Y)} = \|\Delta\fhi\|_{L^2(Y)}$ for any $\fhi\in H^2_{\mathrm{per},0}(Y)$.
\end{rmrk}

Using these results, we will now study periodic problems in double-divergence-form and nondivergence-form.

\subsection{Periodic problems and invariant measures}\label{Sec: Per problems}

In this section, we discuss the periodic double-divergence-form problem
\begin{align}\label{q equation}
-D^2:(qA) = f\quad\text{in }Y,\qquad q\text{ is Y-periodic},
\end{align} 
and the periodic nondivergence-form problem
\begin{align}\label{v equation}
-A:D^2 v = f\quad\text{in }Y,\qquad v\text{ is Y-periodic},
\end{align}
where $f\in L^2_{\mathrm{per}}(Y)$, $A\in \mathcal{M}(\lambda,\Lambda)$ for some $\lambda,\Lambda > 0$, and we assume that the Cordes condition \eqref{Cordes} holds. We seek solutions  $q\in L^2_{\mathrm{per}}(Y)$ to \eqref{q equation}, i.e., $(q,-A:D^2 \fhi)_{L^2(Y)} = (f,\fhi)_{L^2(Y)}$ for any $\fhi\in H^2_{\mathrm{per}}(Y)$. First, we introduce the notion of an invariant measure; see also \cite{BLP11,BS17}.

\begin{dfntn}[invariant measure]
Let $A\in \mathcal{M}(\lambda,\Lambda)$ for some $\lambda,\Lambda > 0$. A function $r\in L^2_{\mathrm{per}}(Y)$ is called an invariant measure to $A$ if $\int_Y r = 1$ and $(r,-A:D^2 \fhi)_{L^2(Y)} = 0$ for all $\fhi\in H^2_{\mathrm{per}}(Y)$. 
\end{dfntn}

Our first result is the existence and uniqueness of an invariant measure $r\in L^2_{\mathrm{per}}(Y)$ to any $A\in \mathcal{M}(\lambda,\Lambda)$ satisfying the Cordes condition, and an $L^2$-bound.

\begin{thrm}[existence, uniqueness and properties of invariant measures]\label{Thm: invmeas}
Let $A\in \mathcal{M}(\lambda,\Lambda)$ for some $\lambda,\Lambda > 0$ and suppose that \eqref{Cordes} holds. Let $\gamma\in L^{\infty}_{\mathrm{per}}(Y)$ be defined as in Remark \ref{Rk: Cordes holds} and set $\tilde{A}:=\gamma A$. Let $b_0:H^2_{\mathrm{per}}(Y)\times H^2_{\mathrm{per}}(Y)\rightarrow \R$ and $C_{\delta} > 0$ be defined as in Lemma \ref{Lmm: b_mu}. Then, we have the following:
\begin{itemize}
\item[(i)] There exists a unique $\psi\in H^2_{\mathrm{per},0}(Y)$ such that $b_0(\fhi,\psi) = \int_Y \tilde{A}:D^2 \fhi$ for any $\fhi\in H^2_{\mathrm{per},0}(Y)$ and we have the bound $\|\Delta \psi\|_{L^2(Y)}\leq \sqrt{n}\frac{\Lambda}{\lambda}C_{\delta}$. 

\item[(ii)] The function $\tilde{r}:=1- \Delta \psi\in L^2_{\mathrm{per}}(Y)$ is the unique invariant measure to $\tilde{A}$, and there holds $\tilde{r}\geq 0$ almost everywhere.
\item[(iii)] The function $r := (\gamma,\tilde{r})_{L^2(Y)}^{-1} \gamma\tilde{r}\in L^2_{\mathrm{per}}(Y)$ is the unique invariant measure to $A$, and there holds $r\geq 0$ almost everywhere. Further, we have the bound
\begin{align}\label{r bound}
\|r\|_{L^2(Y)} \leq \frac{\Lambda^3}{\lambda^3}  \|\tilde{r}\|_{L^2(Y)}\leq \frac{\Lambda^3}{\lambda^3}\left(  \sqrt{n}\frac{\Lambda}{\lambda}C_{\delta} + 1 \right).
\end{align} 
\end{itemize}
\end{thrm}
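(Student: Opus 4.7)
For (i), the plan is a direct application of the Lax--Milgram theorem to the bilinear form $b_0$ on $H^2_{\mathrm{per},0}(Y)$, equipped with the norm $\fhi \mapsto \|D^2\fhi\|_{L^2(Y)}$, which is equivalent to $\|\cdot\|_{H^2(Y)}$ on this subspace by Remark \ref{Rk: norm on H^2_per,0}. Coercivity with constant $C_\delta^{-1}$ is immediate from Lemma \ref{Lmm: b_mu} (case $\mu=0$). Boundedness of $b_0$ and of the linear functional $\fhi \mapsto \int_Y \tilde A : D^2\fhi$ both reduce to the pointwise estimate $|\tilde A|^2 = (\mathrm{tr}(A))^2/|A|^2 \leq n\Lambda^2/\lambda^2$, which follows from the eigenvalue bounds $|A|^2 \geq n\lambda^2$ and $\mathrm{tr}(A) \leq n\Lambda$. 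The announced estimate on $\|\Delta\psi\|_{L^2(Y)}$ then comes from testing the variational equation against $\fhi = \psi$, applying Cauchy--Schwarz with the $L^\infty$-bound on $\tilde A$ together with $\|D^2\psi\|_{L^2(Y)} = \|\Delta\psi\|_{L^2(Y)}$, and invoking the coercivity bound $\|\Delta\psi\|_{L^2(Y)}^2 \leq C_\delta\, b_0(\psi,\psi)$.

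For (ii), the invariant-measure identity for $\tilde r := 1-\Delta\psi$ falls out from unpacking the definition of $b_0$: since $b_0(\fhi,\psi) = \int_Y (\tilde A : D^2\fhi)\,\Delta\psi$, the variational equation from (i) rearranges to $\int_Y (\tilde A : D^2\fhi)(1-\Delta\psi) = 0$ for every $\fhi \in H^2_{\mathrm{per},0}(Y)$, and this extends to all $\fhi \in H^2_{\mathrm{per}}(Y)$ because $D^2$ annihilates constants. The normalization $\int_Y\tilde r = 1$ is immediate from periodicity of $\nabla\psi$. For uniqueness, given any invariant measure $\tilde r_*\in L^2_{\mathrm{per}}(Y)$ with $\int_Y\tilde r_*=1$, I would define $\psi_* := (-\Delta)^{-1}(1-\tilde r_*)\in H^2_{\mathrm{per},0}(Y)$ (well defined since $1-\tilde r_*$ has mean zero) and check by direct substitution that $\psi_*$ solves the same Lax--Milgram equation as $\psi$; uniqueness in (i) then forces $\psi_* = \psi$ and hence $\tilde r_* = \tilde r$. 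Nonnegativity is the step I expect to be the main obstacle, as it is a sign property not captured by the energy framework; my approach would be an approximation argument, mollifying $\tilde A$ to smooth coefficients $\tilde A_\rho$ for which the pointwise bound $|\tilde A_\rho - I_n| \leq \sqrt{1-\delta}$ is preserved under convex averaging, so that the corresponding classical invariant measure $\tilde r_\rho$ is strictly positive by standard Harnack/strong-maximum-principle theory. The uniform $L^2$-bound from (i) (which is stable under such regularization) yields weak $L^2$-compactness, passage to the limit in the invariant-measure identity together with the uniqueness just proved identifies the weak limit with $\tilde r$, and weak $L^2$-convergence preserves nonnegativity.

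For (iii), the key observation is that multiplication by $\gamma$ sets up a bijection between invariant measures of $\tilde A = \gamma A$ and of $A$, thanks to the pointwise bounds $\lambda/\Lambda^2 \leq \gamma \leq \Lambda/\lambda^2$ from Remark \ref{Rk: Cordes holds}. Concretely, I would verify by direct substitution that $r := (\gamma,\tilde r)_{L^2(Y)}^{-1}\gamma\tilde r$ is well-defined (the normalization is bounded below by $\lambda/\Lambda^2$ since $\tilde r\geq 0$ integrates to one) and that
\[
(r, -A:D^2\fhi)_{L^2(Y)} = (\gamma,\tilde r)_{L^2(Y)}^{-1}\,(\tilde r, -\tilde A : D^2 \fhi)_{L^2(Y)} = 0 \qquad \forall\,\fhi\in H^2_{\mathrm{per}}(Y),
\]
while $\int_Y r = 1$ holds by construction. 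Uniqueness transfers through the same bijection from the uniqueness of $\tilde r$, nonnegativity is inherited from $\tilde r \geq 0$ and positivity of $\gamma$, and the claimed $L^2$-bound \eqref{r bound} follows by combining the pointwise estimates on $\gamma$ with the triangle inequality $\|\tilde r\|_{L^2(Y)} \leq 1 + \|\Delta\psi\|_{L^2(Y)}$ and the bound from (i).
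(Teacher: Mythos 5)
Your proposal is correct and follows essentially the same route as the paper: Lax--Milgram for (i), the ``solve $\Delta\xi = 1-\hat r$ and invoke uniqueness of $\psi$'' trick for uniqueness in (ii), mollification plus classical positivity for smooth coefficients plus weak $L^2$-compactness and uniqueness for $\tilde r\geq 0$, and the $\gamma$-multiplication bijection for (iii). The only (harmless) deviations are that you mollify $\tilde A$ directly where the paper mollifies $A$ and renormalizes the weak limit by $\gamma$ afterwards, and a sign slip in setting $\psi_* = (-\Delta)^{-1}(1-\tilde r_*)$, which gives $\tilde r_* = 1+\Delta\psi_*$ so that it is $-\psi_*$ (not $\psi_*$) that solves the variational problem of (i).
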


With Theorem \ref{Thm: invmeas} at hand, we can now state the main result on problems of the form \eqref{q equation}.

\begin{thrm}[analysis of the problem \eqref{q equation}]\label{Thm: Main thm double div}
Let $f\in L^2_{\mathrm{per}}(Y)$, $A\in \mathcal{M}(\lambda,\Lambda)$ for some $\lambda,\Lambda > 0$, and suppose that \eqref{Cordes} holds. Let $r\in L^2_{\mathrm{per}}(Y)$ denote the unique invariant measure to $A$, and let $C_{\delta} > 0$ be defined as in Lemma \ref{Lmm: b_mu}. Then, we have the following:
\begin{itemize}
\item[(i)] There exists a solution $q\in L^2_{\mathrm{per}}(Y)$ to the problem \eqref{q equation} if, and only if, $f\in L^2_{\mathrm{per},0}(Y)$.
\item[(ii)] If $f\in L^2_{\mathrm{per},0}(Y)$ and $q_1,q_2\in L^2_{\mathrm{per}}(Y)$ are solutions to \eqref{q equation}, then $q_1 - q_2 = cr$ with $c:=\int_Y (q_1-q_2)$. In particular, the problem \eqref{q equation} has a unique solution if $\int_Y q$ is prescribed. 
\item[(iii)] If $f\in L^2_{\mathrm{per},0}(Y)$, then the unique solution $q_0\in L^2_{\mathrm{per},0}(Y)$ to \eqref{q equation} subject to $\int_Y q = 0$, whose existence and uniqueness follows from (i)-(ii), satisfies the bound
\begin{align*}
\|q_0\|_{L^2(Y)}\leq \frac{n\Lambda}{\pi^2\lambda^2}C_{\delta} \left(1+\|r\|_{L^2(Y)}\right)\|f\|_{L^{2}(Y)}. 
\end{align*}
\end{itemize}
\end{thrm}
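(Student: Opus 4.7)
The plan is to rewrite the problem in terms of the rescaled matrix $\tilde A := \gamma A$ (with $\gamma = \mathrm{tr}(A)/\lvert A\rvert^2$ from Remark \ref{Rk: Cordes holds}) so as to exploit the coercivity of the bilinear form $b_0$ from Lemma \ref{Lmm: b_mu}. Since $\gamma$ is bounded away from zero and above, the substitution $\tilde q := q/\gamma$ is a bijection on $L^2_{\mathrm{per}}(Y)$, and using $qA = \tilde q\tilde A$ the tested identity defining a solution $q$ to \eqref{q equation} is equivalent to $(\tilde q, -\tilde A : D^2\fhi)_{L^2(Y)} = (f,\fhi)_{L^2(Y)}$ for all $\fhi \in H^2_{\mathrm{per}}(Y)$.

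For part (i), necessity of $f \in L^2_{\mathrm{per},0}(Y)$ follows by testing with $\fhi \equiv 1$. For sufficiency, I would apply the Lax--Milgram theorem to $b_0$ on $H^2_{\mathrm{per},0}(Y)$ (which carries $\|D^2\cdot\|_{L^2(Y)}$ as an equivalent norm by Remark \ref{Rk: norm on H^2_per,0}) with right-hand side $\fhi \mapsto (f,\fhi)_{L^2(Y)}$, boundedness of which follows from $\|\fhi\|_{L^2(Y)} \leq \tfrac{n}{\pi^2}\|D^2\fhi\|_{L^2(Y)}$. This produces a unique $\psi \in H^2_{\mathrm{per},0}(Y)$ with $b_0(\fhi,\psi) = (f,\fhi)_{L^2(Y)}$ for all $\fhi \in H^2_{\mathrm{per},0}(Y)$, and unfolding the definition of $b_0$ shows that $\tilde q := -\Delta \psi$ satisfies $(\tilde q, -\tilde A : D^2\fhi)_{L^2(Y)} = (f,\fhi)_{L^2(Y)}$ on $H^2_{\mathrm{per},0}(Y)$; the identity extends to $H^2_{\mathrm{per}}(Y)$ because both sides vanish on constants (using $f \in L^2_{\mathrm{per},0}(Y)$). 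Then $q := \gamma \tilde q \in L^2_{\mathrm{per}}(Y)$ is the sought solution to \eqref{q equation}.

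For part (ii), the difference $w := q_1 - q_2$ solves the homogeneous equation, and, setting $\tilde w := w/\gamma$, the same holds with $\tilde A$ in place of $A$. The key auxiliary claim is that any $w_\star \in L^2_{\mathrm{per},0}(Y)$ satisfying $(w_\star, -\tilde A : D^2\fhi)_{L^2(Y)} = 0$ for all $\fhi \in H^2_{\mathrm{per}}(Y)$ must vanish: writing $w_\star = -\Delta \chi$ for the unique $\chi \in H^2_{\mathrm{per},0}(Y)$ (the map $-\Delta$ is an isomorphism from $H^2_{\mathrm{per},0}(Y)$ onto $L^2_{\mathrm{per},0}(Y)$), the equation reads $b_0(\fhi,\chi) = 0$ on $H^2_{\mathrm{per},0}(Y)$, and coercivity with $\fhi = \chi$ forces $\chi = 0$. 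Applying this claim to $\tilde w - \tilde c\, \tilde r$, where $\tilde c := \int_Y \tilde w$ and $\tilde r$ is as in Theorem \ref{Thm: invmeas}(ii), yields $\tilde w = \tilde c\, \tilde r$, so $w = \tilde c\, \gamma \tilde r$ is a scalar multiple of $r$ by Theorem \ref{Thm: invmeas}(iii); integrating identifies the coefficient as $c = \int_Y w$.

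For part (iii), I would test the Lax--Milgram identity at $\fhi = \psi$: coercivity gives $\|\Delta \psi\|_{L^2(Y)}^2 \leq C_\delta\, b_0(\psi,\psi) = C_\delta (f,\psi)_{L^2(Y)}$, which combined with Cauchy--Schwarz and $\|\psi\|_{L^2(Y)} \leq \tfrac{n}{\pi^2}\|\Delta\psi\|_{L^2(Y)}$ delivers $\|\tilde q\|_{L^2(Y)} = \|\Delta\psi\|_{L^2(Y)} \leq \tfrac{n}{\pi^2}C_\delta \|f\|_{L^2(Y)}$. Writing $q_0 = q - (\int_Y q)\, r = \gamma \tilde q - (\gamma,\tilde q)_{L^2(Y)}\, r$ and using $\|\gamma\|_{L^\infty(Y)} \leq \Lambda/\lambda^2$ (Remark \ref{Rk: Cordes holds}) together with $\|\gamma\|_{L^2(Y)} \leq \|\gamma\|_{L^\infty(Y)}$ (since $\lvert Y\rvert = 1$) then yields the claimed bound by the triangle inequality. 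I expect the most delicate step to be the uniqueness assertion in (ii), since it requires both the reduction to $\tilde A$ and the direct Lax--Milgram argument to exclude nontrivial mean-zero solutions of the homogeneous equation.
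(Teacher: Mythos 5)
Your proposal is correct and follows essentially the same route as the paper: Lax--Milgram for $b_0$ on $H^2_{\mathrm{per},0}(Y)$ to construct $q=-\gamma\Delta\psi$ in (i), reduction modulo the invariant measure in (ii), and the coercivity/Poincar\'e chain $\|\Delta\psi\|_{L^2(Y)}\leq\frac{n}{\pi^2}C_\delta\|f\|_{L^2(Y)}$ together with $\|\gamma\|_{L^\infty}\leq\Lambda/\lambda^2$ in (iii). The only (harmless) deviation is in (ii), where you re-derive the triviality of mean-zero solutions of the homogeneous problem by a direct coercivity argument instead of simply invoking the uniqueness of the invariant measure already established in Theorem \ref{Thm: invmeas}; the underlying mechanism is identical.
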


Next, we turn to the analysis of the problem \eqref{v equation}.

\begin{thrm}[analysis of the problem \eqref{v equation}]\label{Thm: per nondiv}
Let $f\in L^2_{\mathrm{per}}(Y)$, $A\in \mathcal{M}(\lambda,\Lambda)$ for some $\lambda,\Lambda > 0$, and suppose that \eqref{Cordes} holds. Let $\gamma\in L^{\infty}_{\mathrm{per}}(Y)$ be defined as in Remark \ref{Rk: Cordes holds} and let $b_0:H^2_{\mathrm{per}}(Y)\times H^2_{\mathrm{per}}(Y)\rightarrow \R$ and $C_{\delta} > 0$ be defined as in Lemma \ref{Lmm: b_mu}. Let $r\in L^2_{\mathrm{per}}(Y)$ denote the unique invariant measure to $A$. Then, we have the following:
\begin{itemize}
\item[(i)] There exists a solution $v\in H^2_{\mathrm{per}}(Y)$ to the problem \eqref{v equation} if, and only if, $(f,r)_{L^2(Y)} = 0$. Moreover, if $(f,r)_{L^2(Y)} = 0$, we have that $v\in H^2_{\mathrm{per}}(Y)$ is a solution to \eqref{v equation} if, and only if, $b_0(v,\fhi) = -(\gamma f,\Delta \fhi)_{L^2(Y)}$ for any $\fhi \in H^2_{\mathrm{per}}(Y)$.
\item[(ii)] If $(f,r)_{L^2(Y)} = 0$ and $v_1,v_2\in H^2_{\mathrm{per}}(Y)$ are solutions to \eqref{v equation}, then $v_1 - v_2 = \mathrm{const.}$ almost everywhere. In particular, the problem \eqref{v equation} has a unique solution if $\int_Y v$ is prescribed. 
\item[(iii)] If $(f,r)_{L^2(Y)} = 0$, then the unique solution $v_0\in H^2_{\mathrm{per},0}(Y)$ to \eqref{v equation} subject to $\int_Y v_0 = 0$,
whose existence and uniqueness follows from (i)-(ii), satisfies the bound
\begin{align*}
\|\Delta v_0\|_{L^2(Y)} \leq \frac{\Lambda}{\lambda^2} C_{\delta}\|f\|_{L^2(Y)}.
\end{align*}
\end{itemize}
\end{thrm}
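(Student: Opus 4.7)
The plan is to build a candidate solution via Lax--Milgram on $H^2_{\mathrm{per},0}(Y)$, upgrade it to a strong $L^2$-solution under the compatibility condition $(f,r)_{L^2(Y)}=0$, and then harvest (ii) and (iii) from the same variational setup. First, I would apply Lax--Milgram to the bilinear form $b_0$ (continuity is direct and coercivity is Lemma \ref{Lmm: b_mu}) together with the linear functional $\fhi\mapsto -(\gamma f,\Delta\fhi)_{L^2(Y)}$, which is bounded on $H^2_{\mathrm{per},0}(Y)$ using $\|\gamma\|_{L^{\infty}(Y)}\leq \Lambda/\lambda^2$ from Remark \ref{Rk: Cordes holds} and $\|\Delta\fhi\|_{L^2(Y)}\leq \|D^2\fhi\|_{L^2(Y)}$ from Remark \ref{Rk: norm on H^2_per,0}. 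This produces a unique $v_0\in H^2_{\mathrm{per},0}(Y)$ with $b_0(v_0,\fhi)=-(\gamma f,\Delta\fhi)_{L^2(Y)}$ for all $\fhi\in H^2_{\mathrm{per},0}(Y)$, and the identity extends to every $\fhi\in H^2_{\mathrm{per}}(Y)$ since adding a constant to $\fhi$ changes neither $D^2\fhi$ nor $\Delta\fhi$.

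The step I expect to be the main obstacle is identifying $v_0$ with a genuine solution of $-A:D^2 v_0 = f$. The variational identity reads $(\gamma A:D^2 v_0 + \gamma f,\Delta\fhi)_{L^2(Y)}=0$ for all $\fhi\in H^2_{\mathrm{per},0}(Y)$. Since every element of $L^2_{\mathrm{per},0}(Y)$ arises as $\Delta\fhi$ for some $\fhi\in H^2_{\mathrm{per},0}(Y)$ (solve the periodic Poisson problem with mean-zero datum and invoke periodic $H^2$-regularity), I deduce that $\gamma(A:D^2 v_0+f)$ is a constant $c$ almost everywhere. Dividing by $\gamma>0$ and pairing with $r$, the invariant-measure property annihilates the Hessian term and leaves $(r,f)_{L^2(Y)}=c\,(r,1/\gamma)_{L^2(Y)}$. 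Using $r=(\gamma,\tilde r)^{-1}_{L^2(Y)}\gamma\tilde r$ together with $\int_Y\tilde r=1$ (immediate from $\tilde r=1-\Delta\psi$), one finds $(r,1/\gamma)_{L^2(Y)}=(\gamma,\tilde r)^{-1}_{L^2(Y)}>0$, so $c=0$ iff $(f,r)_{L^2(Y)}=0$. This yields sufficiency in (i); necessity is immediate by testing $-A:D^2 v=f$ against $r$. The characterization statement in (i) is then a two-sided computation: if $v$ solves the PDE, multiplying by $-\gamma$ and pairing with $-\Delta\fhi$ gives $b_0(v,\fhi)=-(\gamma f,\Delta\fhi)_{L^2(Y)}$, and conversely the $c=0$ argument recovers $-A:D^2 v=f$.

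For (ii), given two solutions $v_1,v_2$, the difference $w:=v_1-v_2$ satisfies $-A:D^2 w=0$, and hence $b_0(w,w)=(-\gamma A:D^2 w,-\Delta w)_{L^2(Y)}=0$. Because $b_0(\fhi_1,\fhi_2)$ depends on $\fhi_1$ only through $D^2\fhi_1$ and on $\fhi_2$ only through $\Delta\fhi_2$, one has $b_0(w,w)=b_0(w-\bar w,w-\bar w)$ with $\bar w:=\int_Y w$, and coercivity of $b_0$ on $H^2_{\mathrm{per},0}(Y)$ forces $w=\bar w$ almost everywhere. Finally, for (iii), testing the variational identity with $\fhi=v_0$ gives $b_0(v_0,v_0)=-(\gamma f,\Delta v_0)_{L^2(Y)}$; Lemma \ref{Lmm: b_mu} then yields $\|\Delta v_0\|_{L^2(Y)}^2=\|D^2 v_0\|_{L^2(Y)}^2\leq C_{\delta}\,b_0(v_0,v_0)$, and Cauchy--Schwarz together with $\|\gamma\|_{L^{\infty}(Y)}\leq \Lambda/\lambda^2$ and cancellation of one factor of $\|\Delta v_0\|_{L^2(Y)}$ produces the claimed estimate.
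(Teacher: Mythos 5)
Your proposal is correct and follows essentially the same route as the paper: Lax--Milgram for $b_0$ on $H^2_{\mathrm{per},0}(Y)$, upgrading the variational solution to a strong solution via surjectivity of $\Delta:H^2_{\mathrm{per},0}(Y)\to L^2_{\mathrm{per},0}(Y)$, and the coercivity estimate for (iii). The only cosmetic difference is in the upgrade step: the paper extends the test-function identity from $L^2_{\mathrm{per},0}(Y)$ to all of $L^2_{\mathrm{per}}(Y)$ by decomposing $\phi = \phi_1 + c\tilde r$, whereas you deduce that $\gamma(A:D^2v_0+f)$ is constant and identify the constant by pairing with $r$ -- the two arguments are dual formulations of the same use of the invariant measure.
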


We conclude this section by noting that the results regarding existence and uniqueness of solutions to \eqref{q equation} and \eqref{v equation} can also be obtained via the Fredholm alternative, observing that $K:L^2_{\mathrm{per}}(Y)\rightarrow L^2_{\mathrm{per}}(Y)$, $f\mapsto (-\gamma A:D^2+\mu\, \mathrm{id})^{-1}f$ is a compact linear operator for $\mu>0$ due to Lemma \ref{Lmm: b_mu}. Our approach however is a constructive one and forms the basis for a simple construction of finite element methods for the numerical approximation of \eqref{q equation} and \eqref{v equation}.

\subsection{The convergence result}\label{Sec: conv result}

Let $\Omega\subset \R^n$ be a bounded convex domain, $A\in \mathcal{M}(\lambda,\Lambda)$ for some $\lambda,\Lambda > 0$, $f\in L^2(\Omega)$, $g\in H^2(\Omega)$, and suppose that \eqref{Cordes} holds. We consider the problem 
\begin{align}\label{ueps problem again}
\begin{split}
L_{\eps}u_{\eps}:=-A^{\eps}:D^2 u_{\eps} &= f\quad\text{in }\Omega,\\ u_{\eps} &= g\quad\text{on }\partial\Omega,
\end{split}
\end{align}  
where $\eps > 0$ is a small positive parameter. We recall from Theorem \ref{Thm: Well-posedness of uepsproblem} that there exists a unique solution $u_{\eps}\in H^2(\Omega)$ to \eqref{ueps problem again} and that we have the uniform $H^2$-bound \eqref{uniform H2 bd}. Thus, there exists a function $u\in H^2(\Omega)$ with $u - g\in  H^1_0(\Omega)$ such that, upon passing to a subsequence,
\begin{align}\label{limit for subs}
u_{\eps}\weak u\quad\text{weakly in }H^2(\Omega)\text{ as }\eps\searrow 0,\qquad
u_{\eps}\rightarrow u\quad\text{strongly in }H^1(\Omega)\text{ as }\eps\searrow 0.
\end{align}
We denote the invariant measure to $A$ given by Theorem \ref{Thm: invmeas} by $r\in L^2_{\mathrm{per}}(Y)$ and multiply the equation $L_{\eps}u_{\eps} = f$, which holds a.e. in $\Omega$, by $r^{\eps}:=r(\frac{\cdot}{\eps})$ to obtain
\begin{align}\label{weighted with r}
-[rA]^{\eps}:D^2 u_{\eps} = r^{\eps} f \quad \text{a.e. in }\Omega,
\end{align}
where $[rA]^{\eps}:= r^{\eps}A^{\eps}$. Next, we apply the transformation argument from \cite{AL89} (see also \cite{JZ23}) to obtain a divergence-form equation for $u_{\eps}$. 

Noting that $rA\in L^2_{\mathrm{per}}(Y;\R^{n\times n}_{\mathrm{sym}})$, we introduce $\phi_j\in H^1_{\mathrm{per},0}(Y)$, $1\leq j\leq n$, to be the unique solution in $H^1_{\mathrm{per},0}(Y)$ to the problem
\begin{align}\label{phi_j prob}
-\Delta \phi_j = \nabla\cdot (rA e_j)\quad\text{in }Y,\qquad \phi_j\text{ is $Y$-periodic},\qquad \int_Y \phi_j = 0,
\end{align}
where $e_j\in \R^n$ denotes the $j$-th column of $I_n$. We set $\phi:=(\phi_1,\dots,\phi_n)\in H^1_{\mathrm{per},0}(Y;\R^n)$.
\begin{rmrk}\label{Rk: Prop of phi:j}
We make the following observations:
\begin{itemize}
\item[(i)] For any $L\in \N$ and $1\leq j\leq n$, the function $\phi_{j,L}:=\phi_j$ is the unique solution in $H^1_{\mathrm{per},0}(Y_L)$ to 
\begin{align}\label{phi_j,L}
-\Delta \phi_{j,L} = \nabla\cdot (rA e_j)\quad\text{in }Y_L,\qquad \phi_{j,L}\text{ is $Y_L$-periodic},\qquad \int_{Y_L} \phi_{j,L} = 0,
\end{align}
where $Y_L:=(0,L)^n$. Indeed, note $\phi_{j,L} = \phi_{j,L}(\cdot + k)$ for any $k\in \Z^n$ by uniqueness of solutions to \eqref{phi_j,L}. Hence, $\phi_{j,L}\in H^1_{\mathrm{per},0}(Y)$ solves \eqref{phi_j prob} and thus, $\phi_{j,L} = \phi_j$.
\item[(ii)] There holds $\nabla\cdot \phi = 0$ almost everywhere. Indeed, this follows from $\nabla\cdot \phi\in L^2_{\mathrm{per},0}(Y)$ and 
\begin{align*}
(\nabla\cdot \phi,\Delta\fhi)_{L^2(Y)}  = \sum_{j=1}^n (\nabla \phi_j,\nabla (\partial_j\fhi))_{L^2(Y)} = -\sum_{j=1}^n (rAe_j,\nabla (\partial_j\fhi))_{L^2(Y)}  = 0
\end{align*}
for any $\fhi\in H^2_{\mathrm{per}}(Y)$, where we have used in the last step that $(r,-A:D^2 \fhi)_{L^2(Y)} = 0$.
\end{itemize}
\end{rmrk}

Next, we define the skew-symmetric matrix-valued map $B = (b_{ij})_{1\leq i,j\leq n}\in L^2_{\mathrm{per},0}(Y;\R^{n\times n})$ by $b_{ij}:=\partial_i \phi_j - \partial_j \phi_i$ for $1\leq i,j\leq n$ and we set
\begin{align*}
M:=rA+B \in L^2_{\mathrm{per}}(Y;\R^{n\times n}).
\end{align*} 
We observe that for any $L\in \N$ and $1\leq j\leq n$, writing $Y_L:=(0,L)^n$, we have that
\begin{align*}
-(Me_j,\nabla \fhi)_{L^2(Y_L)} =(\nabla \phi_j - Be_j, \nabla \fhi)_{L^2(Y_L)} =  (\partial_j\phi,\nabla \fhi)_{L^2(Y_L)} = (\nabla\cdot \phi, \partial_j \fhi)_{L^2(Y_L)} = 0
\end{align*}
for any $\fhi\in H^1_{\mathrm{per}}(Y_L)$, where we have used Remark \ref{Rk: Prop of phi:j}(i) in the first equality and Remark \ref{Rk: Prop of phi:j}(ii) in the last equality. Hence, we have for any bounded domain $\omega\subset \R^n$ that 
\begin{align}\label{div M = 0 weakly}
(Me_j,\nabla w)_{L^2(\omega)} = 0\qquad \forall 1\leq j\leq n\quad \forall w\in H^1_0(\omega).
\end{align}
Indeed, let $L\in \N$ such that $\omega\subset k_L+Y_L$ with $k_L:=-\frac{L}{2}(1,\dots,1)\in \R^n$, extend $w\in H^1_0(\Omega)$ to a function $\tilde{w}\in H^1_0(k_L+Y_L)$ by setting $\tilde{w} = 0$ a.e. in $(k_L+Y_L)\backslash \omega$, and define $\fhi\in H^1_{\mathrm{per}}(Y_L)$ to be the $Y_L$-periodic extension of $\tilde{w}$ to see that $(Me_j,\nabla w)_{L^2(\omega)} = (Me_j,\nabla \tilde{w})_{L^2(k_L+Y_L)} = (Me_j,\nabla \fhi)_{L^2(Y_L)} = 0$.

Writing $M^{\eps}:=M(\frac{\cdot}{\eps})$, we then obtain that
\begin{align}\label{divform eqn}
(M^{\eps}\nabla u_{\eps},\nabla v)_{L^2(\Omega)} = (-M^{\eps}:D^2 u_{\eps}, v)_{L^2(\Omega)} = (r^{\eps}f,v)_{L^2(\Omega)}\qquad \forall v\in C_c^{\infty}(\Omega),
\end{align}  
where the first equality follows from \eqref{div M = 0 weakly} and the second equality follows from \eqref{weighted with r} and skew-symmetry of $B$. Finally, noting that due to the fact that $r\in L^2_{\mathrm{per}}(Y)$, $M\in L^2_{\mathrm{per}}(Y;\R^{n\times n})$ and $\int_Y B = 0$ we have 
\begin{align}\label{lim of M}
M^{\eps} &\weak \int_Y M = \int_Y rA\;\text{ weakly in }L^2(\Omega;\R^{n\times n}),\;\;
r^{\eps} \weak \int_Y r = 1\;\text{ weakly in }L^2(\Omega)\;\text{ as }\eps\searrow 0,
\end{align}
we can use \eqref{limit for subs} to pass to the limit $\eps\searrow 0$ in \eqref{divform eqn} to obtain the following convergence result:

\begin{thrm}[convergence result]\label{Thm: Conv}
Let $\Omega\subset \R^n$ be a bounded convex domain, $A\in \mathcal{M}(\lambda,\Lambda)$ for some $\lambda,\Lambda > 0$, $f\in L^2(\Omega)$, $g\in H^2(\Omega)$, and suppose that \eqref{Cordes} holds. Denoting the invariant measure to $A$ by $r\in L^2_{\mathrm{per}}(Y)$, we introduce $\bar{A}:= \int_Y rA\in \R^{n\times n}_{\mathrm{sym}}$. Then, there holds $\lambda \lvert \xi\rvert^2\leq \bar{A}\xi\cdot \xi\leq \Lambda \lvert \xi\rvert^2$ for all $\xi\in \R^n$, and the sequence of solutions $(u_{\eps})_{\eps > 0}\subset H^2(\Omega)$ to \eqref{ueps problem again} converges weakly in $H^2(\Omega)$ to the unique solution $u\in H^2(\Omega)$ to the homogenized problem
\begin{align}\label{u problem}
\begin{split}
\bar{L}u:=-\bar{A}:D^2 u &= f\quad\text{in }\Omega,\\ u &= g\quad\text{on }\partial\Omega.
\end{split}
\end{align} 
We call the symmetric positive definite matrix $\bar{A}$ the effective coefficient matrix.
\end{thrm}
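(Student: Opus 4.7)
The plan has three parts: verify the ellipticity of $\bar{A}$ (which also yields uniqueness for \eqref{u problem}), pass to the limit in the divergence-form identity \eqref{divform eqn}, and then promote subsequential weak convergence to convergence of the entire family.

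First, since $r\geq 0$ almost everywhere by Theorem \ref{Thm: invmeas}(iii) and $\int_Y r = 1$, for every $\xi\in\R^n$ one has
\[
\bar{A}\xi\cdot\xi \;=\; \int_Y r\,(A\xi\cdot\xi),
\]
and the pointwise bound $\lambda|\xi|^2\leq A\xi\cdot\xi\leq \Lambda|\xi|^2$ immediately gives the stated ellipticity. In particular, $\bar{A}$ is symmetric and uniformly positive definite, so if $u_1,u_2\in H^2(\Omega)$ both satisfy \eqref{u problem} then $w:=u_1-u_2\in H^2(\Omega)\cap H^1_0(\Omega)$ satisfies $-\bar{A}:D^2 w = 0$; integrating by parts against $w$ and using $\bar{A}\nabla w\cdot\nabla w\geq\lambda|\nabla w|^2$ forces $w\equiv 0$, so the homogenized problem admits at most one $H^2$-solution.

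Next, I would pass to the limit $\eps\searrow 0$ in \eqref{divform eqn},
\[
(M^{\eps}\nabla u_{\eps},\nabla v)_{L^2(\Omega)} = (r^{\eps} f,v)_{L^2(\Omega)}\qquad \forall v\in C_c^{\infty}(\Omega),
\]
by splitting the left-hand side as
\[
(M^{\eps}(\nabla u_{\eps}-\nabla u),\nabla v)_{L^2(\Omega)} + (M^{\eps}\nabla u,\nabla v)_{L^2(\Omega)}.
\]
The first piece is bounded by $\|M^{\eps}\|_{L^2(\Omega)}\|\nabla u_{\eps}-\nabla u\|_{L^2(\Omega)}\|\nabla v\|_{L^{\infty}(\Omega)}$ and vanishes thanks to the uniform $L^2$-boundedness of $M^{\eps}$ (inherited from $M\in L^2_{\mathrm{per}}(Y;\R^{n\times n})$) together with the strong $H^1$-convergence in \eqref{limit for subs}. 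For the second piece, the tensor $\nabla u\otimes\nabla v$ lies in $L^2(\Omega;\R^{n\times n})$ because $\nabla v\in L^{\infty}$, so the weak convergence $M^{\eps}\weak \bar{A}$ in $L^2$ from \eqref{lim of M} yields convergence to $(\bar{A}\nabla u,\nabla v)_{L^2(\Omega)}$. On the right-hand side, $fv\in L^2(\Omega)$ paired with $r^{\eps}\weak 1$ in $L^2(\Omega)$ yields the limit $(f,v)_{L^2(\Omega)}$.

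Hence any subsequential weak-$H^2$ limit $u$ of $(u_{\eps})$ satisfies $(\bar{A}\nabla u,\nabla v)_{L^2(\Omega)} = (f,v)_{L^2(\Omega)}$ for all $v\in C_c^{\infty}(\Omega)$; since $\bar{A}$ is constant and $u\in H^2(\Omega)$, integrating by parts converts this into the pointwise identity $-\bar{A}:D^2 u = f$ a.e.\ in $\Omega$. The boundary condition $u-g\in H^1_0(\Omega)$ is inherited from $u_{\eps}-g\in H^1_0(\Omega)$ via the strong $H^1$-convergence in \eqref{limit for subs}. Uniqueness from the first step then implies that $u$ does not depend on the extracted subsequence, so the full family $(u_{\eps})_{\eps>0}$ converges weakly in $H^2(\Omega)$ to $u$ via a standard subsequence-of-subsequence argument. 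The only mildly delicate point is that $M^{\eps}\nabla u_{\eps}$ is a product of two sequences each converging only weakly in $L^2$; the strong $H^1$-convergence of $u_{\eps}$, together with the $L^{\infty}$-regularity of $\nabla v$ on smooth test functions, is precisely what allows the passage to the limit without invoking div--curl or compensated-compactness machinery.
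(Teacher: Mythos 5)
Your proposal is correct and follows exactly the paper's route: ellipticity of $\bar{A}$ from $r\geq 0$ and $\int_Y r=1$, passage to the limit in the divergence-form identity \eqref{divform eqn} using \eqref{limit for subs} and \eqref{lim of M}, and a uniqueness-plus-subsequence argument to upgrade to convergence of the whole family. The only difference is one of detail: the paper compresses the limit passage into a single sentence, whereas you spell out the splitting of $(M^{\eps}\nabla u_{\eps},\nabla v)_{L^2(\Omega)}$ that makes the product of the weakly convergent $M^{\eps}$ and the strongly $H^1$-convergent $u_{\eps}$ tractable against smooth test functions.
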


\subsection{Corrector estimates}\label{Sec: corrector est}

It is by now standard how to obtain corrector estimates to any order in the classical $A\in C^{0,\alpha}$ setting, introducing suitable interior and boundary correctors and assuming that $u$ is sufficiently regular; see e.g. \cite{KL16,ST21}. For $A\in L^{\infty}$, using the uniform $H^2$-bound from Theorem \ref{Thm: Well-posedness of uepsproblem} and using Theorem \ref{Thm: per nondiv}, one obtains the following results by arguing along the lines of \cite{ST21}. To simplify the statement of the results, we introduce the following notation: 

\begin{dfntn}[the notation $T(A,a)$]\label{Def: T(A,a)}
Let $a\in L^2_{\mathrm{per}}(Y)$, $A\in \mathcal{M}(\lambda,\Lambda)$ for some $\lambda,\Lambda > 0$, and suppose that $A$ satisfies \eqref{Cordes}. Let $r\in L^2_{\mathrm{per}}(Y)$ denote the invariant measure to $A$. Then, we define $T(A,a):=w$ to be the unique element $w\in H^2_{\mathrm{per},0}(Y)$ satisfying $-A:D^2 w = a-(a,r)_{L^2(Y)}$.
\end{dfntn}

\begin{thrm}[an $\calO(\eps)$ corrector estimate in the $H^2$-norm]\label{Thm: corrI}
Suppose that we are in the situation of Theorem \ref{Thm: Conv} and assume that $u\in H^4(\Omega)$. Set $v_{ij}:=T(A,a_{ij})\in H^2_{\mathrm{per},0}(Y)$ for $1\leq i,j\leq n$ and set $V := (v_{ij})_{1\leq i,j\leq n}$. Writing $\fhi^{\eps}:=\fhi(\frac{\cdot}{\eps})$ for $\fhi\in\{V,\partial_i V\}$, we define $\eta_{\eps}:=V^{\eps}:D^2 u$ and 
\begin{align*}
p_{ij,\eps}:= 2 [\partial_i V]^{\eps} :D^2(\partial_{j}u)  + \eps V^{\eps}:D^2(\partial^2_{ij}u),\qquad P_{\eps}:= (p_{ij,\eps})_{1\leq i,j\leq n}.
\end{align*}
Then, writing $C_{\delta}:=(1-\sqrt{1-\delta})^{-1}>0$ and assuming that
\begin{align}\label{ass corr1}
\eta_{\eps}\in H^2(\Omega)\quad\text{and}\quad P_{\eps}\in L^2(\Omega)\text{ with } \|P_{\eps}\|_{L^2(\Omega)} = \calO(1),
\end{align}
there exists a constant $c_0 = c_0(\mathrm{diam}(\Omega),n)>0$ such that
\begin{align}\label{basic corrector bound}
\left\|u_{\eps}-u-\eps^2 (\eta_{\eps} + \theta_{\eps})\right\|_{H^2(\Omega)} \leq \frac{\Lambda}{\lambda} C_{\delta}c_0 \|P_{\eps}\|_{L^2(\Omega)} \,\eps = \calO(\eps),
\end{align} 
where $\theta_{\eps}\in H^2(\Omega)$ denotes the unique solution to $L_{\eps} \theta_{\eps} = 0$ in $\Omega$, $\theta_{\eps} = -\eta_{\eps}$ on $\partial\Omega$.
\end{thrm}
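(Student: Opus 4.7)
The plan is to show that the residual
\[
w_\eps := u_\eps - u - \eps^2(\eta_\eps + \theta_\eps) \in H^2(\Omega) \cap H^1_0(\Omega)
\]
solves a Dirichlet problem of the same form as \eqref{ueps problem again} with zero boundary data and an $L^2$-source of order $\eps$, and then to invoke Theorem \ref{Thm: Well-posedness of uepsproblem}. The boundary values cancel by construction, since $u_\eps = u = g$ and $\theta_\eps = -\eta_\eps$ on $\partial\Omega$. The assumption \eqref{ass corr1} is precisely what I need: the condition $\eta_\eps \in H^2(\Omega)$ ensures that the Dirichlet datum $-\eta_\eps$ of $\theta_\eps$ admits a $H^2(\Omega)$-extension, so Theorem \ref{Thm: Well-posedness of uepsproblem} furnishes $\theta_\eps \in H^2(\Omega)$, and the condition $P_\eps \in L^2(\Omega)$ ensures that the source term of the equation I will derive for $w_\eps$ is in $L^2(\Omega)$.

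The heart of the argument is the expansion of $L_\eps(u + \eps^2 \eta_\eps)$. Writing $\eta_\eps(x) = V(x/\eps) : D^2 u(x)$ and differentiating via the chain and product rules, $D^2 \eta_\eps$ splits into three contributions of formal orders $\eps^{-2}$, $\eps^{-1}$ and $\eps^{0}$. Multiplied by $-\eps^2 A^\eps$, the $\eps^{-2}$ contribution becomes
\[
-\sum_{i,j}\bigl(A^\eps : (D^2 v_{ij})^\eps\bigr)\,\partial^2_{ij} u = (A^\eps - \bar{A}):D^2 u,
\]
where I used the cell identity $-A:D^2 v_{ij} = a_{ij} - \bar{A}_{ij}$ from Definition \ref{Def: T(A,a)}. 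Added to $-A^\eps: D^2 u$ and combined with the homogenized equation $-\bar{A}:D^2 u = f$, the oscillatory coefficient $A^\eps$ cancels and only $f$ survives. The remaining $\eps^{-1}$ and $\eps^{0}$ contributions, after relabelling summation indices and exploiting the symmetry of $A^\eps$ (so that the two mixed terms $\sum_{k,l} A^\eps_{kl}(\partial_k v_{ij})^\eps \partial^3_{lij} u$ and $\sum_{k,l} A^\eps_{kl}(\partial_l v_{ij})^\eps \partial^3_{kij} u$ coincide), assemble precisely into $-\eps A^\eps:P_\eps$. Together with $L_\eps \theta_\eps = 0$ and $L_\eps u_\eps = f$, this yields
\[
L_\eps w_\eps = \eps\, A^\eps : P_\eps \qquad \text{a.e.\ in }\Omega.
\]

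Finally, I would apply Theorem \ref{Thm: Well-posedness of uepsproblem} (or equivalently its underlying Miranda--Talenti--Cordes mechanism) to $w_\eps$, which solves a zero-Dirichlet problem with source $\eps A^\eps:P_\eps \in L^2(\Omega)$. Multiplying the equation by $\gamma^\eps = \mathrm{tr}(A^\eps)/|A^\eps|^2$ from Remark \ref{Rk: Cordes holds} to reduce to $-\tilde{A}^\eps:D^2 w_\eps = \eps\, \tilde{A}^\eps : P_\eps$, combining the Miranda--Talenti identity $\|\Delta w_\eps\|_{L^2(\Omega)} = \|D^2 w_\eps\|_{L^2(\Omega)}$ on convex $\Omega$ with the Cordes contraction $|\tilde{A}^\eps - I_n| \leq \sqrt{1-\delta}$ gives $\|D^2 w_\eps\|_{L^2}\leq C_\delta\|\gamma^\eps\|_\infty\|\eps A^\eps:P_\eps\|_{L^2}$, and Poincaré inequalities on $H^1_0(\Omega)$ upgrade this to the full $H^2$-norm with a constant of the form $c_0\frac{\Lambda}{\lambda}C_\delta$ after inserting the pointwise bounds from Remark \ref{Rk: Cordes holds}. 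The main obstacle is the combinatorial bookkeeping in the chain-rule expansion: one must carefully verify that the $\eps^{-1}$ and $\eps^{0}$ contributions of $-\eps^2 A^\eps:D^2 \eta_\eps$, after index relabelling and symmetrisation of $A^\eps$, match exactly the two summands $2[\partial_i V]^\eps:D^2(\partial_j u)$ and $\eps V^\eps:D^2(\partial^2_{ij} u)$ of $p_{ij,\eps}$, with no remainder terms. Once this algebraic identity is in hand, the conclusion is an immediate consequence of results already proved.
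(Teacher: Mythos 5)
Your proposal is correct and follows essentially the same route as the paper: the paper's proof likewise asserts (without spelling out the chain-rule bookkeeping you describe) that $r_{\eps}:=u_{\eps}-u-\eps^2(\eta_{\eps}+\theta_{\eps})\in H^2(\Omega)\cap H^1_0(\Omega)$ satisfies $L_{\eps}r_{\eps}=\eps A^{\eps}:P_{\eps}$, and then applies the Cordes/Miranda--Talenti a priori bound from the proof of Theorem \ref{Thm: Well-posedness of uepsproblem}. The only detail worth noting is that to obtain the stated constant $\frac{\Lambda}{\lambda}C_{\delta}c_0$ one should use the sharper pointwise bound $\gamma\lvert A\rvert=\frac{\mathrm{tr}(A)}{\lvert A\rvert}\leq\sqrt{n}\frac{\Lambda}{\lambda}$ rather than the crude product of the separate bounds on $\gamma$ and $\lvert A\rvert$ from Remark \ref{Rk: Cordes holds}.
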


\begin{rmrk}\label{Rk: const c0}
The proof of Theorem \ref{Thm: corrI} shows that \eqref{basic corrector bound} holds with
\begin{align*}
c_0:=\sqrt{n}\,\sup_{v\in H^2(\Omega)\cap H^1_0(\Omega)\backslash\{0\}} \frac{\|v\|_{H^2(\Omega)}}{\|\Delta v\|_{L^2(\Omega)}}.
\end{align*}
\end{rmrk}

\begin{rmrk}\label{Rk: CorrI in 123d}
If $n\leq 3$, in view of the Sobolev embeddings, $u\in H^4(\Omega)$ is sufficient to guarantee that the assumption \eqref{ass corr1} in Theorem \ref{Thm: corrI} is met.
\end{rmrk}

\begin{cor}[$L^{\infty}$-rate in dimension $n\leq 3$]\label{Cor: Linfty rate}
Suppose that we are in the situation of Theorem \ref{Thm: Conv} and $n\leq 3$. Then, if $u\in H^4(\Omega)$, we have that $\|u_{\eps}-u\|_{L^{\infty}(\Omega)} = \calO(\eps)$.
\end{cor}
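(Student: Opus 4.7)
My plan is to upgrade the $H^2$ corrector bound of Theorem \ref{Thm: corrI} to an $L^{\infty}$ estimate via Sobolev embedding, and then to absorb the two corrector terms $\eps^2\eta_{\eps}$ and $\eps^2\theta_{\eps}$ into the $\calO(\eps)$ residual. For $n\leq 3$ and $u\in H^4(\Omega)$, Remark \ref{Rk: CorrI in 123d} ensures that hypothesis \eqref{ass corr1} of Theorem \ref{Thm: corrI} is fulfilled, so \eqref{basic corrector bound} yields $\|u_{\eps}-u-\eps^2(\eta_{\eps}+\theta_{\eps})\|_{H^2(\Omega)}=\calO(\eps)$. Invoking the Sobolev embedding $H^2(\Omega)\hookrightarrow L^{\infty}(\Omega)$, valid for $n\leq 3$, immediately transfers this to
\begin{equation*}
\|u_{\eps}-u-\eps^2(\eta_{\eps}+\theta_{\eps})\|_{L^{\infty}(\Omega)}=\calO(\eps).
\end{equation*}

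The remaining step is to check that each of $\eps^2\eta_{\eps}$ and $\eps^2\theta_{\eps}$ is $\calO(\eps^2)$ in $L^{\infty}(\Omega)$. For $\eta_{\eps}=V^{\eps}:D^2 u$, the entries of $V$ lie in $H^2_{\mathrm{per},0}(Y)\hookrightarrow L^{\infty}(Y)$ for $n\leq 3$, so $\|V^{\eps}\|_{L^{\infty}(\Omega)}=\|V\|_{L^{\infty}(Y)}$ uniformly in $\eps$; combined with the embedding $H^4(\Omega)\hookrightarrow W^{2,\infty}(\Omega)$ (again for $n\leq 3$), this yields $\|\eta_{\eps}\|_{L^{\infty}(\Omega)}\leq C$. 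For the boundary corrector $\theta_{\eps}$, I would exploit that it solves the homogeneous problem $L_{\eps}\theta_{\eps}=0$ a.e.~in $\Omega$ with continuous boundary data $-\eta_{\eps}|_{\partial\Omega}$ whose $L^{\infty}$-norm is controlled by the previous step, and invoke the weak maximum principle for strong $H^2$ solutions to uniformly elliptic nondivergence-form equations on convex domains under the Cordes condition to conclude $\|\theta_{\eps}\|_{L^{\infty}(\Omega)}\leq\|\eta_{\eps}\|_{L^{\infty}(\partial\Omega)}\leq C$. The triangle inequality then delivers the desired bound $\|u_{\eps}-u\|_{L^{\infty}(\Omega)}=\calO(\eps)$.

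The main obstacle is justifying the $L^{\infty}$-bound on $\theta_{\eps}$ through a maximum principle. For $n\leq 2$ this follows immediately from the classical Alexandrov--Bakelman--Pucci principle because $H^2(\Omega)\subset W^{2,n}(\Omega)$, but for $n=3$ the strong-solution ABP principle is not directly available since $H^2(\Omega)\not\hookrightarrow W^{2,3}(\Omega)$. A natural remedy is to approximate $A$ by smooth $Y$-periodic matrices $A_k$ satisfying \eqref{Cordes} with the same $\delta$, apply ABP at the smooth level (where solutions are classical), and pass to the limit using the uniform $H^2$-bound of Theorem \ref{Thm: Well-posedness of uepsproblem}; alternatively, one can exploit the closeness $|\tilde{A}-I_n|\leq\sqrt{1-\delta}$ from Remark \ref{Rk: Cordes holds} to reduce to an estimate for the Laplacian plus a small perturbation.
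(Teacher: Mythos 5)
Your argument is essentially identical to the paper's: the corrector estimate of Theorem \ref{Thm: corrI} together with the Sobolev embedding $H^2(\Omega)\hookrightarrow L^{\infty}(\Omega)$ for $n\leq 3$, plus uniform $L^{\infty}$-bounds on $\eta_{\eps}$ (via $V\in H^2_{\mathrm{per}}\hookrightarrow L^{\infty}$ and $D^2u\in H^2\hookrightarrow L^{\infty}$) and on $\theta_{\eps}$ via the maximum principle, which is exactly how the paper proceeds. The only difference is that the paper handles the maximum-principle step for merely measurable Cordes coefficients by citing \cite{Saf10}, whereas you correctly flag it as the delicate point for $n=3$ and sketch an approximation argument to justify it.
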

Note that the result of Corollary \ref{Cor: Linfty rate} follows directly from Theorem \ref{Thm: corrI}, Remark \ref{Rk: CorrI in 123d}, the Sobolev embedding $H^2(\Omega)\hookrightarrow L^{\infty}(\Omega)$, and the fact that by the maximum principle (see e.g., \cite{Saf10}) we have the bound $\|\theta_{\eps}\|_{L^{\infty}(\Omega)}\leq \|V^{\eps}:D^2 u\|_{L^{\infty}(\Omega)}\leq \|V\|_{L^{\infty}(\R^n)}\|D^2 u\|_{L^{\infty}(\Omega)}$ in dimension $n\leq 3$.

Similarly, we can obtain an $\calO(\eps^2)$ corrector estimate in the $H^2(\Omega)$-norm.

\begin{thrm}[an $\calO(\eps^2)$ corrector estimate in the $H^2$-norm]\label{Thm: CorrII}
Suppose that we are in the situation of Theorem \ref{Thm: corrI} and assume that $u\in H^5(\Omega)$. Set $\chi_{jkl}:=T(A,Ae_j\cdot \nabla v_{kl})\in H^2_{\mathrm{per},0}(Y)$ for $1\leq j,k,l\leq n$ and set $X := (\chi_{jkl})_{1\leq j,k,l\leq n}$. Writing $\fhi^{\eps}:=\fhi(\frac{\cdot}{\eps})$ for $\fhi\in\{V,X,\partial_i X\}$, we define $\tilde{\eta}_{\eps}:=X^{\eps}:D^3 u$ and 
\begin{align*}
q_{ij,\eps}:= V^{\eps}:D^2(\partial^2_{ij} u) + 4 [\partial_i X]^{\eps} :D^3(\partial_{j}u)  + 2\eps X^{\eps}:D^3(\partial^2_{ij}u),\qquad Q_{\eps}:= (q_{ij,\eps})_{1\leq i,j\leq n}.
\end{align*}
Then, writing $C_{\delta}:=(1-\sqrt{1-\delta})^{-1}$ and assuming that
\begin{align}\label{ass corr2}
\tilde{\eta}_{\eps}\in H^2(\Omega)\quad\text{and}\quad Q_{\eps}\in L^2(\Omega)\text{ with } \|Q_{\eps}\|_{L^2(\Omega)} = \calO(1),
\end{align}
we have the bound
\begin{align}\label{corrector bound 2}
\|u_{\eps}-u+2\eps z_{\eps}-\eps^2 (\eta_{\eps}+\theta_{\eps})-2\eps^3(\tilde{\eta}_{\eps}+\tilde{\theta}_{\eps})\|_{H^2(\Omega)} \leq \frac{\Lambda}{\lambda} C_{\delta}c_0 \|Q_{\eps}\|_{L^2(\Omega)}\,\eps^2 = \calO(\eps^2),
\end{align} 
where $c_0>0$ is as in Remark \ref{Rk: const c0} and $\tilde{\theta}_{\eps},z_{\eps}\in H^2(\Omega)$ denote the unique solutions to $L_{\eps} \tilde{\theta}_{\eps} = 0$ in $\Omega$, $\tilde{\theta}_{\eps} = -\tilde{\eta}_{\eps}$ on $\partial\Omega$, and $L_{\eps}z_{\eps}= -\sum_{j,k,l=1}^n (Ae_j\cdot \nabla v_{kl},r)_{L^2(Y)}\partial^3_{jkl} u$ in $\Omega$, $z_{\eps} = 0$ on $\partial\Omega$, respectively.
\end{thrm}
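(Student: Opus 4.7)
The strategy is the one used for Theorem~\ref{Thm: corrI}, extended one order further. I would set
\begin{align*}
\tilde u_\eps := u - 2\eps z_\eps + \eps^2(\eta_\eps + \theta_\eps) + 2\eps^3(\tilde\eta_\eps + \tilde\theta_\eps),
\end{align*}
note that $\tilde u_\eps = g$ on $\partial\Omega$ (since $\theta_\eps = -\eta_\eps$ and $\tilde\theta_\eps = -\tilde\eta_\eps$ on $\partial\Omega$ by construction, and $z_\eps$ has zero trace), and then prove that $L_\eps \tilde u_\eps = f - \eps^2 A^\eps:Q_\eps$ in $\Omega$. Granted this, the difference $w_\eps := u_\eps - \tilde u_\eps$ lies in $H^2(\Omega)\cap H^1_0(\Omega)$ and satisfies $L_\eps w_\eps = \eps^2 A^\eps:Q_\eps$, so the same Cordes/Miranda--Talenti argument used in the proof of Theorem~\ref{Thm: corrI} (which produces the constant $\frac{\Lambda}{\lambda} C_\delta c_0$) yields the bound \eqref{corrector bound 2}.

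The heart of the proof is the interior cancellation. The computation underlying Theorem~\ref{Thm: corrI} already gives
\begin{align*}
L_\eps\bigl(u + \eps^2(\eta_\eps + \theta_\eps)\bigr) = f - 2\eps\sum_{j,k,l}[Ae_j\cdot\nabla v_{kl}]^\eps\, \partial^3_{jkl}u - \eps^2 A^\eps:[V^\eps:D^4 u],
\end{align*}
where the $\eps^0$ residual is killed by the corrector equation $-A:D^2 v_{kl} = a_{kl} - \bar a_{kl}$ and by $L_\eps\theta_\eps = 0$. Expanding $L_\eps(2\eps^3\tilde\eta_\eps) = -2\eps^3 A^\eps:D^2(X^\eps:D^3 u)$ with the chain rule and using $-A:D^2\chi_{jkl} = Ae_j\cdot\nabla v_{kl} - (Ae_j\cdot\nabla v_{kl},r)_{L^2(Y)}$ produces
\begin{align*}
L_\eps(2\eps^3\tilde\eta_\eps) &= 2\eps\sum_{j,k,l}\bigl([Ae_j\cdot\nabla v_{kl}]^\eps - (Ae_j\cdot\nabla v_{kl},r)_{L^2(Y)}\bigr)\partial^3_{jkl}u\\
&\quad - 4\eps^2\sum_{j,k,l,n}[Ae_n\cdot\nabla \chi_{jkl}]^\eps\, \partial^4_{njkl}u - 2\eps^3 A^\eps:[X^\eps:D^5 u],
\end{align*}
and $L_\eps(-2\eps z_\eps) = 2\eps \sum_{j,k,l}(Ae_j\cdot\nabla v_{kl},r)_{L^2(Y)}\partial^3_{jkl}u$ by definition of $z_\eps$. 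Summing these expressions and using $L_\eps\tilde\theta_\eps = 0$, the $\calO(\eps)$ terms cancel exactly: the oscillatory part $[Ae_j\cdot\nabla v_{kl}]^\eps$ is removed by the $\chi$-corrector and its periodic mean is removed by $z_\eps$. What remains is precisely $-\eps^2 A^\eps:Q_\eps$, the factor $4$ in $q_{ij,\eps}$ arising from the symmetry $a_{ij} = a_{ji}$ which doubles the cross term coming from pairing $\partial_i$ in $\partial^2_{mn}$ with $\nabla X$.

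The main obstacle is purely algebraic book-keeping: tracking the five-index tensor contractions produced by applying $D^2$ to $X^\eps:D^3 u$, together with the factors of $\eps^{-1}$ from $\nabla[\fhi(\cdot/\eps)] = \eps^{-1}(\nabla\fhi)^\eps$, and recognising the final $\calO(\eps^2)$ residual as exactly the three components of $A^\eps:Q_\eps$ prescribed by the statement. No new analytic tool beyond those already used for Theorem~\ref{Thm: corrI} is required: the Cordes-based $H^2$-stability for $L_\eps$ (giving the constant $\frac{\Lambda}{\lambda}C_\delta c_0$) closes the estimate, and assumption \eqref{ass corr2} is precisely what ensures $\tilde\eta_\eps, \tilde\theta_\eps \in H^2(\Omega)$ via Theorem~\ref{Thm: Well-posedness of uepsproblem} and that $A^\eps:Q_\eps \in L^2(\Omega)$ with norm $\calO(1)$.
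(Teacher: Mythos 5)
Your proposal is correct and follows essentially the same route as the paper: you form the same corrected difference $w_\eps = u_\eps - u + 2\eps z_\eps - \eps^2(\eta_\eps+\theta_\eps) - 2\eps^3(\tilde\eta_\eps+\tilde\theta_\eps)$, verify $L_\eps w_\eps = \eps^2 A^\eps:Q_\eps$ by combining the residual $\eps A^\eps:P_\eps$ from Theorem \ref{Thm: corrI} with the corrector equation for $\chi_{jkl}$ and the definition of $z_\eps$ so that the $\calO(\eps)$ terms cancel, and close with the Cordes-based $H^2$ stability estimate giving the constant $\frac{\Lambda}{\lambda}C_\delta c_0$. The only cosmetic quibble is that the factor $4$ comes from the product-rule cross term \emph{and} the prefactor $2\eps^3$ on $\tilde\eta_\eps$, not from symmetry alone, but your displayed expansion is correct.
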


\begin{rmrk}\label{Rk: CorrII in 123d}
If $n\leq 3$, in view of the Sobolev embeddings, $u\in H^5(\Omega)$ is sufficient to guarantee that the assumption \eqref{ass corr2} in Theorem \ref{Thm: CorrII} is met.
\end{rmrk}

\begin{cor}[$L^{\infty}$-bound in dimension $n\leq 3$]\label{Cor: Linfty rate II}
Suppose that we are in the situation of Theorem \ref{Thm: Conv} and $n\leq 3$. Then, if $u\in H^5(\Omega)$, we have that 
\begin{align}\label{Linfty bd 2}
\|u_{\eps}-u+2\eps z_{\eps}\|_{L^{\infty}(\Omega)} = \calO(\eps^2),
\end{align}
where $z_{\eps}\in H^2(\Omega)$ is defined as in Theorem \ref{Thm: CorrII}.
\end{cor}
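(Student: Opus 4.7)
The plan is to deduce this $L^\infty$-bound from the $H^2$-bound in Theorem \ref{Thm: CorrII} by (i) invoking Remark \ref{Rk: CorrII in 123d} to justify its hypotheses, (ii) applying the Sobolev embedding $H^2(\Omega)\hookrightarrow L^\infty(\Omega)$ (valid for $n\leq 3$), and (iii) absorbing the auxiliary corrector terms $\eta_\eps+\theta_\eps$ and $\tilde\eta_\eps+\tilde\theta_\eps$ into the $\calO(\eps^2)$ remainder via $L^\infty$-bounds.

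\textbf{Step 1.} Since $u\in H^5(\Omega)$ and $n\leq 3$, Remark \ref{Rk: CorrII in 123d} ensures that assumption \eqref{ass corr2} holds, so Theorem \ref{Thm: CorrII} applies and yields
\begin{align*}
\|u_{\eps}-u+2\eps z_{\eps}-\eps^2 (\eta_{\eps}+\theta_{\eps})-2\eps^3(\tilde{\eta}_{\eps}+\tilde{\theta}_{\eps})\|_{H^2(\Omega)} = \calO(\eps^2).
\end{align*}
The Sobolev embedding $H^2(\Omega)\hookrightarrow L^\infty(\Omega)$ (for $n\leq 3$) then upgrades this to the same $\calO(\eps^2)$ bound in the $L^\infty$-norm. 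By the triangle inequality, it then suffices to show
\begin{align*}
\eps^2\|\eta_\eps+\theta_\eps\|_{L^\infty(\Omega)}+2\eps^3\|\tilde\eta_\eps+\tilde\theta_\eps\|_{L^\infty(\Omega)}=\calO(\eps^2).
\end{align*}

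\textbf{Step 2.} I will bound each entry of $V$ and $X$ in $L^\infty$. Each component $v_{ij}=T(A,a_{ij})$ and $\chi_{jkl}=T(A,Ae_j\cdot\nabla v_{kl})$ lies in $H^2_{\mathrm{per},0}(Y)$, and in dimension $n\leq 3$ the embedding $H^2_{\mathrm{per}}(Y)\hookrightarrow L^\infty(Y)$ yields $\|V\|_{L^\infty(\R^n)}+\|X\|_{L^\infty(\R^n)}<\infty$. Since $u\in H^5(\Omega)$ and $n\leq 3$, we likewise have $D^2u,D^3u\in L^\infty(\Omega)$. Hence
\begin{align*}
\|\eta_\eps\|_{L^\infty(\Omega)}\leq \|V\|_{L^\infty(\R^n)}\|D^2 u\|_{L^\infty(\Omega)},\qquad \|\tilde\eta_\eps\|_{L^\infty(\Omega)}\leq \|X\|_{L^\infty(\R^n)}\|D^3 u\|_{L^\infty(\Omega)},
\end{align*}
so both are $\calO(1)$ uniformly in $\eps$.

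\textbf{Step 3.} For the boundary correctors, recall that $\theta_\eps$ and $\tilde\theta_\eps$ solve $L_\eps w = 0$ in $\Omega$ with boundary data $-\eta_\eps$ and $-\tilde\eta_\eps$ respectively. Applying the maximum principle for nondivergence-form equations of this type (as in \cite{Saf10}, exactly as used in the argument below Corollary \ref{Cor: Linfty rate}) gives
\begin{align*}
\|\theta_\eps\|_{L^\infty(\Omega)}\leq \|\eta_\eps\|_{L^\infty(\partial\Omega)}\leq \|\eta_\eps\|_{L^\infty(\Omega)}=\calO(1),
\end{align*}
and analogously $\|\tilde\theta_\eps\|_{L^\infty(\Omega)}=\calO(1)$. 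Combining these $\calO(1)$ bounds with the $\eps^2$ and $\eps^3$ prefactors shows that the auxiliary terms contribute at most $\calO(\eps^2)$ in $L^\infty$, which together with Step 1 gives \eqref{Linfty bd 2}.

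\textbf{Main obstacle.} The argument is essentially routine given Theorem \ref{Thm: CorrII}; the only real care is needed in verifying the $L^\infty$-bounds on the periodic correctors $V$ and $X$ and on $D^2u,D^3u$, which is precisely where the restriction $n\leq 3$ (via $H^2\hookrightarrow L^\infty$) enters, and in invoking the maximum principle for the boundary correctors in the nondivergence setting with merely $L^\infty$ coefficients.
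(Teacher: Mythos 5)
Your proof is correct and follows exactly the route the paper intends: apply Theorem \ref{Thm: CorrII} together with Remark \ref{Rk: CorrII in 123d}, use the Sobolev embedding $H^2(\Omega)\hookrightarrow L^{\infty}(\Omega)$ for $n\leq 3$, and control the corrector terms $\eta_{\eps},\tilde{\eta}_{\eps}$ via $H^2_{\mathrm{per}}(Y)\hookrightarrow L^{\infty}(Y)$ and the boundary correctors $\theta_{\eps},\tilde{\theta}_{\eps}$ via the maximum principle, exactly as the paper does for Corollary \ref{Cor: Linfty rate}. No gaps.
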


Similarly to Theorems \ref{Thm: corrI} and \ref{Thm: CorrII}, corrector estimates in the $H^2(\Omega)$-norm can be obtained to any order, assuming that $u$ is sufficiently regular and constructing suitable corrector functions.

\subsection{Type-$\eps^2$ diffusion matrices}\label{Sec: Type}

First, let us generalize the definition of type-$\eps^2$ diffusion matrices from \cite{GST22} given for $A\in C^{0,\alpha}$ to our present situation.

\begin{dfntn}[type-$\eps^2$ and type-$\eps$ diffusion matrices]\label{Def: type}
Let $A\in \mathcal{M}(\lambda,\Lambda)$ for some $\lambda,\Lambda > 0$. We set $c_j^{kl}(A):=(Ae_j\cdot \nabla v_{kl},r)_{L^2(Y)}$ for $1\leq j,k,l\leq n$, where $r\in L^2_{\mathrm{per}}(Y)$ denotes the invariant measure to $A$ and $v_{kl}:=T(A,a_{kl})\in H^2_{\mathrm{per},0}(Y)$. We call $A$ a type-$\eps^2$ diffusion matrix if $C_{jkl}(A):=c_j^{kl}(A) + c_k^{jl}(A) + c_l^{jk}(A) = 0$ for all $1\leq j,k,l\leq n$. Otherwise, we call $A$ a type-$\eps$ diffusion matrix.
\end{dfntn}

Note that the function $z_{\eps}$ in \eqref{corrector bound 2} and \eqref{Linfty bd 2} vanishes for any $u\in H^5(\Omega)$ if and only if the symmetric part of the third-order homogenized tensor $(c_j^{kl})_{1\leq j,k,l\leq n}$ vanishes, or equivalently, if and only if $A$ is type-$\eps^2$. In particular, in the situation of Corollary \ref{Cor: Linfty rate II}, if $A$ is type-$\eps^2$ we have that $\|u_{\eps}-u\|_{L^{\infty}(\Omega)} = \calO(\eps^2)$  whereas if $A$ is type-$\eps$ we only have that $\|u_{\eps}-u\|_{L^{\infty}(\Omega)} = \calO(\eps)$ and this rate $\calO(\eps)$ is optimal in general.

We can now extend the main results from \cite{GST22} obtained for $A\in C^{0,\alpha}$ to our present case.  The following results can then be proved by using arguments almost identical to \cite{GST22}.

\begin{thrm}[diffusion matrices $A = C+aM$ are type-$\eps^2$]\label{Thm: C+aM}
Let $C,M\in \R^{n\times n}_{\mathrm{sym}}$, let $a\in L^{\infty}_{\mathrm{per}}(Y)$, and suppose that $A := C+aM$ is such that $A\in \mathcal{M}(\lambda,\Lambda)$ for some $\lambda,\Lambda>0$ and \eqref{Cordes} holds. Set $w:=T(A,a)\in H^2_{\mathrm{per},0}(Y)$ and $v_{ij}:=T(A,a_{ij})\in H^2_{\mathrm{per},0}(Y)$ for $1\leq i,j\leq n$. Then, we have the following results:
\begin{itemize}
\item[(i)] The invariant measure to $A$ is given by $r = 1+M:D^2 w$.
\item[(ii)] We have that $V:=(v_{ij})_{1\leq i,j\leq n}$ is given by $V = wM$.
\item[(iii)] There holds $-C:D^2 w = ra-\bar{a}$ where $\bar{a}:=(a,r)_{L^2(Y)}$.
\item[(iv)] The third-order homogenized tensor vanishes, i.e., $c_j^{kl}(A) = 0$ for all $1\leq j,k,l\leq n$.
\end{itemize}
In particular, $A$ is a type-$\eps^2$ diffusion matrix.
\end{thrm}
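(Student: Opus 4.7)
The plan is to derive all four assertions from the explicit defining equation $-A:D^2 w = a-\bar{a}$ for $w = T(A,a)$, where $\bar{a}:=(a,r)_{L^2(Y)}$. I would proceed in the order (ii), then (i) and (iii) jointly, then (iv); each part is either a short uniqueness argument or a short integration-by-parts computation.

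For (ii), since $A = C+aM$ and $r$ is the invariant measure to $A$, one has $a_{ij}-(a_{ij},r)_{L^2(Y)} = (a-\bar{a})M_{ij}$. Therefore $-A:D^2(wM_{ij}) = M_{ij}(a-\bar{a})$ and $wM_{ij}\in H^2_{\mathrm{per},0}(Y)$, so by the uniqueness in Theorem~\ref{Thm: per nondiv}(ii) we obtain $v_{ij} = wM_{ij}$, i.e. $V = wM$.

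For (i) and (iii), set $\tilde{r}:=1+M:D^2 w \in L^2_{\mathrm{per}}(Y)$; clearly $\int_Y\tilde{r} = 1$ by periodicity of $w$. Inserting $A=C+aM$ into the equation for $w$ and solving for $a$ yields
\[
a\tilde{r} = \bar{a} - C:D^2 w,
\]
which is precisely (iii) once $\tilde{r}$ is identified with $r$. To perform this identification I would test $\tilde{r}$ against $-A:D^2\varphi$ for an arbitrary $\varphi\in H^2_{\mathrm{per}}(Y)$; using $\int_Y C:D^2\varphi = \int_Y M:D^2\varphi = 0$ by periodicity and substituting the identity above for $a\tilde{r}$, the computation collapses to
\[
(\tilde{r},-A:D^2\varphi)_{L^2(Y)} = (C:D^2 w,\,M:D^2\varphi)_{L^2(Y)} - (M:D^2 w,\,C:D^2\varphi)_{L^2(Y)}.
\]
The main obstacle is showing this difference vanishes. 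By density of $C^{\infty}_{\mathrm{per}}(Y)$ in $H^2_{\mathrm{per}}(Y)$ and double integration by parts, the scalar $\int_Y \partial_i\partial_j w\cdot\partial_k\partial_l\varphi$ is invariant under the pair swap $(i,j)\leftrightarrow(k,l)$; relabelling the summation indices against the constant coefficients $C_{ij}$ and $M_{kl}$ then equates the two inner products. Hence $\tilde{r}$ is an invariant measure to $A$, and the uniqueness in Theorem~\ref{Thm: invmeas}(iii) gives $r = \tilde{r}$, proving (i); assertion (iii) now follows from the identity $a\tilde r = \bar a - C:D^2 w$.

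For (iv), by (ii) we have $\nabla v_{kl} = M_{kl}\nabla w$, so
\[
c_j^{kl}(A) = M_{kl}\int_Y r\,(Ae_j\cdot\nabla w).
\]
Splitting $A = C+aM$ and invoking (i) in the form $r = 1+M:D^2 w$ together with (iii) in the form $ra = \bar{a} - C:D^2 w$, each of the integrals $\int_Y r\,\nabla w$ and $\int_Y ra\,\nabla w$ reduces to a linear combination of the trilinear form $T_{ijk}:=\int_Y \partial_i\partial_j w\,\partial_k w$. The key observation, and the cleanest step of the argument, is that $T_{ijk} \equiv 0$ for every $w\in H^2_{\mathrm{per}}(Y)$: equality of mixed partials gives $T_{ijk} = T_{jik}$, while one integration by parts in $x_i$ under periodic boundary conditions gives $T_{ijk} = -T_{ikj}$. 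Chaining these two symmetries,
\[
T_{ijk} = T_{jik} = -T_{jki} = -T_{kji} = T_{kij} = T_{ikj} = -T_{ijk},
\]
which forces $T_{ijk} = 0$. Hence $c_j^{kl}(A) = 0$ for all $j,k,l$, so $C_{jkl}(A)=0$ and $A$ is a type-$\eps^2$ diffusion matrix.
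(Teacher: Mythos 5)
Your proof is correct and follows essentially the same route as the argument the paper defers to (the proof is omitted in the paper with a reference to \cite{GST22}): (ii) by uniqueness in Theorem \ref{Thm: per nondiv}, (i) and (iii) by verifying directly that $1+M:D^2w$ annihilates $-A:D^2\varphi$ using the pair-swap symmetry of $\int_Y \partial_{ij}w\,\partial_{kl}\varphi$ under periodic integration by parts, and (iv) via the vanishing of the trilinear form $\int_Y \partial_{ij}w\,\partial_k w$. All steps, including the index-chasing identities and the density arguments needed at $H^2$-regularity, check out.
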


\begin{thrm}[characterization of type-$\eps^2$ diagonal diffusion matrices for $n=2$]\label{Thm: type-eps^2 diag}
Let $A\in \mathcal{M}(\lambda,\Lambda)$ for $n=2$ and some $\lambda,\Lambda > 0$, and suppose that $\frac{1}{a}A = I_2 + bM =:B$ for some $b\in L^{\infty}_{\mathrm{per}}(Y)$, where $a:=\frac{1}{2}\mathrm{tr}(A)$ and $M:=\mathrm{diag}(1,-1)$. Set $w_A:=T(A,a)\in H^2_{\mathrm{per},0}(Y)$, $w_B:=T(B,b)\in H^2_{\mathrm{per},0}(Y)$, and $v_{ij}:=T(A,a_{ij})\in H^2_{\mathrm{per},0}(Y)$ for $1\leq i,j\leq n$. Then, we have the following results:
\begin{itemize}
\item[(i)] The invariant measure $r_B$ to $B$ is given by $r_B = 1+M:D^2 w_B$, and the invariant measure $r_A$ to $A$ is given by $r_A = \frac{\bar{a}}{a}r_B$ where $\bar{a}:= (\frac{1}{a},r_B)_{L^2(Y)}^{-1} = (a,r_A)_{L^2(Y)}$.
\item[(ii)] We have that $V:=(v_{ij})_{1\leq i,j\leq n}$ is given by $V = w_A(I_2 + \bar{b}M)+w_B M$ where  $\bar{b}:=(b,r_B)_{L^2(Y)}$.
\item[(iii)] There holds $-\Delta w_B = r_B b - \bar{b}$.
\item[(iv)] There holds $c^{kl}_{3-s}(A) = 2(-1)^{s+1} \bar{a}(1+m_{kl}\bar{b})(\partial_{3-s} w_A,\partial^2_{ss} w_B)_{L^2(Y)}$ for any $s,k,l\in\{1,2\}$.
\end{itemize}
In particular, $A$ is type-$\eps^2$ if, and only if, $(\partial_1 w_A,\partial^2_{22} w_B)_{L^2(Y)} = (\partial_2 w_A,\partial^2_{11} w_B)_{L^2(Y)} = 0$.
\end{thrm}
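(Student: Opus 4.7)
The strategy is to leverage Theorem \ref{Thm: C+aM} as a black box for the auxiliary matrix $B := I_2 + bM$, and to exploit the factorization $A = aB$ throughout. For (i) and (iii), observe that $B$ fits the hypotheses of Theorem \ref{Thm: C+aM} with $C = I_2$ and scalar coefficient $b$ in place of $a$; this immediately gives $r_B = 1 + M:D^2 w_B$ and $-\Delta w_B = -I_2:D^2 w_B = r_B b - \bar{b}$, yielding (iii) and the first half of (i). For the relation $r_A = \frac{\bar{a}}{a} r_B$, I would rewrite the invariant-measure identity $(r_A, -A:D^2\fhi)_{L^2(Y)} = 0$, using $A = aB$, as $(a r_A, -B:D^2\fhi)_{L^2(Y)} = 0$; hence by the uniqueness of the invariant measure (Theorem \ref{Thm: invmeas}), the nonnegative function $a r_A$ is a scalar multiple of $r_B$, and normalizing against $\int_Y r_A = 1$ fixes the constant to be $\bar{a} = (\frac{1}{a}, r_B)_{L^2(Y)}^{-1}$. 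A direct computation then gives $(a, r_A)_{L^2(Y)} = \bar{a}(1, r_B)_{L^2(Y)} = \bar{a}$.

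For (ii), I would verify entry by entry that the proposed $V$ satisfies $-A:D^2 V_{ij} = a_{ij} - (a_{ij}, r_A)_{L^2(Y)}$ and conclude by the uniqueness statement in Definition \ref{Def: T(A,a)}. Since $A$ is diagonal, $a_{12} = a_{21} = 0$ and thus $v_{12} = v_{21} = 0$, consistent with the proposed formula. For the diagonal entries, combine $-A:D^2 w_A = a - \bar{a}$ (definition of $w_A$) with $-A:D^2 w_B = -aB:D^2 w_B = a(b - \bar{b})$ (which follows from (iii)), together with the identities $(a_{11}, r_A) = \bar{a}(1+\bar{b})$ and $(a_{22}, r_A) = \bar{a}(1-\bar{b})$ obtained from (i), to check that $w_A(1\pm\bar{b}) \pm w_B$ indeed produce the desired right-hand sides.

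The heart of the proof is (iv). The key step is to establish the two auxiliary identities
\begin{equation*}
\int_Y (1+b)\,r_B\,\partial_1\fhi \,=\, -2\,(\partial_1\fhi,\,\partial^2_{22} w_B)_{L^2(Y)},\qquad \int_Y (1-b)\,r_B\,\partial_2\fhi \,=\, 2\,(\partial_2\fhi,\,\partial^2_{11} w_B)_{L^2(Y)},
\end{equation*}
valid for every $\fhi\in H^2_{\mathrm{per}}(Y)$, which I would obtain by substituting $r_B = 1 + \partial^2_{11}w_B - \partial^2_{22}w_B$ and $r_B b = -\Delta w_B + \bar{b}$ and then discarding the resulting constant term by periodicity of $\fhi$. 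Since $A$ is diagonal, one has $c_j^{kl}(A) = \bar{a}\int_Y (1+m_{jj} b)\,r_B\,\partial_j v_{kl}$; applying the auxiliary identities to $\fhi = w_A$ and $\fhi = w_B$ in the expansion of $\partial_j v_{kl}$ supplied by (ii), and observing that $\int_Y \partial^2_{jj} w_B\,\partial_j w_B = 0$ kills the pure $w_B$ contributions, yields the stated formula for the diagonal entries $k = l$; the off-diagonal cases $k\ne l$ are trivial because $v_{kl} = 0$.

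Finally, for the ``in particular'' claim, one unpacks the type-$\eps^2$ condition $C_{jkl}(A) = 0$. By full symmetry of $C_{jkl}$ and the vanishing of $c_j^{kl}$ for $k\ne l$, the four orbits $(1,1,1),(2,2,2),(1,1,2),(1,2,2)$ reduce it to the four scalar conditions $c_1^{11} = c_2^{22} = c_2^{11} = c_1^{22} = 0$. Since $\lvert\bar{b}\rvert\leq \|b\|_{L^\infty(Y)} < 1$ by ellipticity of $B$, the factors $1\pm\bar{b}$ are strictly positive, so formula (iv) shows that these four conditions collapse to $(\partial_1 w_A,\partial^2_{22} w_B)_{L^2(Y)} = (\partial_2 w_A,\partial^2_{11} w_B)_{L^2(Y)} = 0$. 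The main obstacle is the bookkeeping in step (iv): several careful integrations by parts together with the PDEs for $w_A$ and $w_B$ have to be tracked, but no conceptual difficulty is expected.
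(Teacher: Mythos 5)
Your argument is correct and is essentially the one the paper has in mind: the paper omits this proof entirely (deferring to \cite{GST22}), and your route --- applying Theorem \ref{Thm: C+aM} to $B=I_2+bM$ (admissible since $B\in\mathcal{M}(\lambda/\Lambda,\Lambda/\lambda)$ and Cordes is automatic for $n=2$), transferring the invariant measure via $a r_A=(a,r_A)_{L^2(Y)}\,r_B$ and uniqueness, verifying (ii) through the uniqueness statement of Theorem \ref{Thm: per nondiv}, and reducing (iv) to the two integration-by-parts identities for $(1\pm b)r_B$ --- is exactly the intended adaptation. Two small corrections. First, the pure $w_B$ contribution in (iv) is the mixed-index term $(\partial_{3-s}w_B,\partial^2_{ss}w_B)_{L^2(Y)}$ with $3-s\neq s$, not $\int_Y\partial^2_{jj}w_B\,\partial_j w_B$ as you wrote; it still vanishes, since $(\partial_{3-s}w_B,\partial^2_{ss}w_B)_{L^2(Y)}=-\tfrac12\int_Y\partial_{3-s}\bigl[(\partial_s w_B)^2\bigr]=0$ by periodicity. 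Second, for $k\neq l$ your ``trivial'' case correctly gives $c_j^{kl}=0$ (because $v_{kl}=T(A,0)=0$), but this does not match the right-hand side of (iv) as printed, whose coefficient $1+m_{kl}\bar b$ equals $1$ when $m_{kl}=0$; your own expansion shows the coefficient should be $\delta_{kl}+m_{kl}\bar b$, consistent with $v_{kl}=(\delta_{kl}+m_{kl}\bar b)w_A+m_{kl}w_B$. This is a defect of the displayed formula rather than of your proof, and it does not affect the final characterization, which uses only the diagonal cases $k=l$, the vanishing $c_j^{kl}=0$ for $k\neq l$, and the strict positivity of $1\pm\bar b$ (which you correctly justify via $\lvert\bar b\rvert\leq\|b\|_{L^\infty}<1$ and $r_B\geq 0$, $\int_Y r_B=1$).
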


Note that any diagonal $A = \mathrm{diag}(a_{11},a_{22}) \in \mathcal{M}(\lambda,\Lambda)$ for $n=2$ can be written in the form specified in Theorem \ref{Thm: type-eps^2 diag} by setting $b:=\frac{a_{11}-a_{22}}{a_{11}+a_{22}}$. 

\begin{rmrk}
As a consequence of Theorem \ref{Thm: type-eps^2 diag} we have that any diagonal constant-trace diffusion matrix $A\in \mathcal{M}(\lambda,\Lambda)$ for $n=2$ and some $\lambda,\Lambda > 0$ is type-$\eps^2$.
\end{rmrk}

\section{Numerical Methods}\label{Sec: Num methods}

\subsection{Finite element approximation of the invariant measure}

Let $A\in \mathcal{M}(\lambda,\Lambda)$ for some $\lambda,\Lambda > 0$ and suppose that \eqref{Cordes} holds. In this section, we discuss the finite element approximation of the invariant measure $r\in L^2_{\mathrm{per}}(Y)$ to $A$, i.e., the unique solution $r\in L^2_{\mathrm{per}}(Y)$ to
\begin{align*}
-D^2:(rA) = 0\quad\text{in }Y,\qquad r\text{ is Y-periodic},\qquad \int_Y r = 1.
\end{align*} 
We refer to Section \ref{Sec: Per problems} for the existence, uniqueness, and $L^2$-bounds for the invariant measure. We can construct simple finite element methods (FEMs) for the numerical solution of this problem based on our observation from Theorem \ref{Thm: invmeas} that
\begin{align}\label{num r motivation}
r = c^{-1} \gamma \tilde{r},\quad\text{where}\quad \tilde{r} := 1-\Delta \psi,\quad c:=(\gamma,\tilde{r})_{L^2(Y)},
\end{align}
and $\psi\in H^2_{\mathrm{per},0}(Y)$ is the unique element in $H^2_{\mathrm{per},0}(Y)$ such that
\begin{align}\label{psi problem}
b_0(\fhi,\psi) = (\gamma,A:D^2 \fhi)_{L^2(Y)}\qquad \forall \fhi\in H^2_{\mathrm{per},0}(Y),
\end{align}
where $\gamma\in L^{\infty}_{\mathrm{per}}(Y)$ is defined as in Remark \ref{Rk: Cordes holds} and $b_0:H^2_{\mathrm{per}}(Y)\times H^2_{\mathrm{per}}(Y)\rightarrow \R$ is defined as in Lemma \ref{Lmm: b_mu}. Let us recall that $\tilde{r}\in L^2_{\mathrm{per}}(Y)$ is the unique invariant measure to $\tilde{A}:=\gamma A\in \mathcal{M}(\frac{\lambda^2}{\Lambda^2},\frac{\Lambda^2}{\lambda^2})$.

\subsubsection{Approximation of $\tilde{r}$ via $H^2_{\mathrm{per},0}(Y)$-conforming FEM for $\psi$}

Noting that $b_0$ defines a coercive (Lemma \ref{Lmm: b_mu}) bounded bilinear form on $H^2_{\mathrm{per},0}(Y)$ and $\fhi\mapsto (\gamma,A:D^2 \fhi)_{L^2(Y)}$ defines a bounded linear functional on $H^2_{\mathrm{per},0}(Y)$, we obtain the following result by the Lax--Milgram theorem and standard conforming finite element theory:

\begin{lmm}[approximation of $\tilde{r}$ via $H^2_{\mathrm{per},0}(Y)$-conforming FEM for $\psi$]\label{Lmm: H2 approx rtilde}
Let $A\in \mathcal{M}(\lambda,\Lambda)$ for some $\lambda,\Lambda > 0$ and suppose that \eqref{Cordes} holds. Let $\gamma\in L^{\infty}_{\mathrm{per}}(Y)$ be defined as in Remark \ref{Rk: Cordes holds} and let $\tilde{r}\in L^2_{\mathrm{per}}(Y)$ denote the invariant measure to $\tilde{A}:=\gamma A$. Let $b_0:H^2_{\mathrm{per}}(Y)\times H^2_{\mathrm{per}}(Y)\rightarrow \R$ and $C_{\delta}>0$ be defined as in Lemma \ref{Lmm: b_mu}, and let $\Psi_h\subset H^2_{\mathrm{per},0}(Y)$ be a closed linear subspace of $H^2_{\mathrm{per},0}(Y)$. Then, there exists a unique $\psi_h\in \Psi_h$ such that
\begin{align}\label{psi h prob}
b_0(\fhi_h,\psi_h)  = (\gamma,A:D^2 \fhi_h)_{L^2(Y)}\qquad \forall \fhi_h\in \Psi_h.
\end{align} 
Further, setting $\tilde{r}_h:=1-\Delta \psi_h \in L^2_{\mathrm{per}}(Y)$ we have that
\begin{align*}
\|\tilde{r}-\tilde{r}_h\|_{L^2(Y)} = \|\Delta (\psi - \psi_h)\|_{L^2(Y)} \leq \sqrt{n}\frac{\Lambda}{\lambda}C_{\delta} \inf_{\fhi_h\in \Psi_h}\|\Delta(\psi-\fhi_h)\|_{L^2(Y)},
\end{align*}
where $\psi\in H^2_{\mathrm{per},0}(Y)$ denotes the unique element in $H^2_{\mathrm{per},0}(Y)$ satisfying \eqref{psi problem}.
\end{lmm}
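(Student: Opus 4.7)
The plan is to treat this as a direct application of the Lax--Milgram lemma on the closed subspace $\Psi_h$, followed by a Céa-type quasi-optimality estimate. The underlying Hilbert space is $H^2_{\mathrm{per},0}(Y)$ equipped with the inner product $(\varphi_1,\varphi_2)\mapsto (\Delta\varphi_1,\Delta\varphi_2)_{L^2(Y)}$, which induces a norm equivalent to the full $H^2$-norm by Remark \ref{Rk: norm on H^2_per,0}; the subspace $\Psi_h$ inherits this Hilbert structure.

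First I would verify the three hypotheses of Lax--Milgram on $\Psi_h$. Coercivity of $b_0$, namely $b_0(\varphi,\varphi)\geq C_\delta^{-1}\|\Delta\varphi\|_{L^2(Y)}^2$ for every $\varphi\in H^2_{\mathrm{per},0}(Y)\supset \Psi_h$, is Lemma \ref{Lmm: b_mu} applied with $\mu=0$. Boundedness of $b_0$ follows by Cauchy--Schwarz: writing $b_0(\varphi_1,\varphi_2)=(\gamma A:D^2\varphi_1,\Delta\varphi_2)_{L^2(Y)}$, using $|A|\leq \sqrt{n}\,\Lambda$ together with $\gamma\leq 1/\lambda$ (which follows from $|A|^2\geq\lambda\,\mathrm{tr}(A)$), and the identity $\|D^2\varphi\|_{L^2(Y)}=\|\Delta\varphi\|_{L^2(Y)}$, yields $|b_0(\varphi_1,\varphi_2)|\leq \sqrt{n}\,\tfrac{\Lambda}{\lambda}\|\Delta\varphi_1\|_{L^2(Y)}\|\Delta\varphi_2\|_{L^2(Y)}$. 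The same estimate shows that $\varphi\mapsto (\gamma,A:D^2\varphi)_{L^2(Y)}=(\gamma I_n,A:D^2\varphi)_{L^2(Y)}$ is a bounded linear functional on $\Psi_h$. Lax--Milgram then produces the unique $\psi_h\in\Psi_h$ satisfying \eqref{psi h prob}.

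For the error bound, note that since $\Psi_h\subset H^2_{\mathrm{per},0}(Y)$, the continuous equation \eqref{psi problem} tested against $\varphi_h\in\Psi_h$ and the discrete equation \eqref{psi h prob} yield the Galerkin orthogonality $b_0(\varphi_h,\psi-\psi_h)=0$ for every $\varphi_h\in\Psi_h$. Because $b_0$ is not symmetric, I would apply the orthogonality on the correct slot: fix $\varphi_h\in\Psi_h$ arbitrary and write
\begin{align*}
C_\delta^{-1}\|\Delta(\psi-\psi_h)\|_{L^2(Y)}^2 &\leq b_0(\psi-\psi_h,\psi-\psi_h) = b_0(\psi-\psi_h,\psi)-b_0(\psi-\psi_h,\psi_h),
\end{align*}
then invoke Galerkin orthogonality with $\psi_h\in\Psi_h$ in the first slot (to drop a term) and with $\varphi_h\in\Psi_h$ to substitute $\psi_h$ by $\varphi_h$ in the first slot of the remaining term. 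After applying the boundedness estimate above and dividing by $\|\Delta(\psi-\psi_h)\|_{L^2(Y)}$, taking the infimum over $\varphi_h\in\Psi_h$ gives the stated bound. Finally, the equality $\|\tilde{r}-\tilde{r}_h\|_{L^2(Y)}=\|\Delta(\psi-\psi_h)\|_{L^2(Y)}$ is immediate from $\tilde{r}-\tilde{r}_h=\Delta(\psi_h-\psi)$.

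There is no real obstacle here; the only delicate point is the mild asymmetry of $b_0$, which forces the Céa argument to use Galerkin orthogonality in the first argument and to manipulate $b_0(\psi-\psi_h,\psi-\psi_h)$ by splitting in the second argument rather than the more familiar route available for symmetric forms.
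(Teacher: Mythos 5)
Your overall strategy is exactly the paper's: Lax--Milgram on the closed subspace $\Psi_h$ (coercivity from Lemma \ref{Lmm: b_mu} with $\mu=0$, boundedness with constant $\sqrt{n}\,\Lambda/\lambda$ via $\gamma\lvert A\rvert\leq\sqrt{n}\,\Lambda/\lambda$ and $\|D^2\fhi\|_{L^2(Y)}=\|\Delta\fhi\|_{L^2(Y)}$), followed by a C\'ea estimate using Galerkin orthogonality, and the identity $\tilde r-\tilde r_h=-\Delta(\psi-\psi_h)$. Your bound $\gamma\leq 1/\lambda$ is correct and yields the same constant as the paper's route.

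However, the C\'ea step as you have written it is carried out in the wrong slot and would not go through literally. You correctly state the Galerkin orthogonality in the form $b_0(\fhi_h,\psi-\psi_h)=0$ for all $\fhi_h\in\Psi_h$, i.e.\ with the discrete test function in the \emph{first} argument and the error in the \emph{second}. But your displayed manipulation splits the \emph{second} argument, $b_0(\psi-\psi_h,\psi-\psi_h)=b_0(\psi-\psi_h,\psi)-b_0(\psi-\psi_h,\psi_h)$, producing terms like $b_0(\psi-\psi_h,\psi_h)$ in which the error sits in the first slot and the discrete function in the second; the stated orthogonality says nothing about such terms, so neither can be dropped nor replaced. (Your closing remark that asymmetry "forces \dots splitting in the second argument" repeats this.) The correct manipulation keeps the error in the second slot throughout and modifies only the first: for any $\fhi_h\in\Psi_h$,
\begin{align*}
b_0(\psi-\psi_h,\psi-\psi_h)=b_0(\psi-\fhi_h,\psi-\psi_h)+b_0(\fhi_h-\psi_h,\psi-\psi_h)=b_0(\psi-\fhi_h,\psi-\psi_h),
\end{align*}
since $\fhi_h-\psi_h\in\Psi_h$. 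Combining this with coercivity and boundedness gives the claimed bound. This is a local fix — your verbal description ("substitute $\psi_h$ by $\fhi_h$ in the first slot") already points at it — but the displayed identity and the concluding sentence should be corrected accordingly.
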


In practice, $H^2$-conforming elements such as the the Argyris or the HCT element need to be used on a periodic shape-regular triangulation of the unit cell. A more attractive alternative from an implementation point of view is the approach presented next.

\subsubsection{Approximation of $\tilde{r}$ via $H^1_{\mathrm{per},0}(Y;\R^n)$-conforming FEM}

As an alternative to the $H^2_{\mathrm{per},0}(Y)$-conforming finite element method, we propose an approximation scheme for $\tilde{r}$ based on an $H^1_{\mathrm{per},0}(Y;\R^n)$-conforming FEM, using ideas from  \cite{Gal17b}. To this end, let us introduce the bilinear form $b:H^1_{\mathrm{per}}(Y;\R^n)\times H^1_{\mathrm{per}}(Y;\R^n)\rightarrow \R$ given by
\begin{align}\label{bil form a}
b(w,\tilde{w}):=(\gamma A:Dw,\nabla\cdot \tilde{w})_{L^2(Y)}+S(w,\tilde{w}),\quad   S(w,\tilde{w}):=\frac{1}{2}(Dw - Dw^{\mathrm{T}},D\tilde{w} - D\tilde{w}^{\mathrm{T}})_{L^2(Y)}
\end{align}
for $w,\tilde{w}\in H^1_{\mathrm{per}}(Y;\R^n)$, where $\gamma\in L^{\infty}_{\mathrm{per}}(Y)$ is defined as in Remark \ref{Rk: Cordes holds}. Note that the stabilization term can be equivalently written as $S(w,\tilde{w}) = (\nabla \times w,\nabla \times \tilde{w})_{L^2(Y)}$ if $w,\tilde{w}\in H^1_{\mathrm{per}}(Y;\R^3)$, and as $S((w_1,w_2),(\tilde{w}_1,\tilde{w}_2)) = (\partial_2 w_1-\partial_1 w_2,\partial_2 \tilde{w}_1-\partial_1 \tilde{w}_2)_{L^2(Y)}$ if $(w_1,w_2),(\tilde{w}_1,\tilde{w}_2)\in H^1_{\mathrm{per}}(Y;\R^2)$. We can show the following result:

\begin{lmm}[approximation of $\tilde{r}$ via $H^1_{\mathrm{per},0}(Y;\R^n)$-conforming FEM]\label{Lmm: Approx rtilde mixed}
Let $A\in \mathcal{M}(\lambda,\Lambda)$ for some $\lambda,\Lambda > 0$ and suppose that \eqref{Cordes} holds. Let $\gamma\in L^{\infty}_{\mathrm{per}}(Y)$ be defined as in Remark \ref{Rk: Cordes holds} and let $\tilde{r}\in L^2_{\mathrm{per}}(Y)$ denote the invariant measure to $\tilde{A}:=\gamma A$. Let $b:H^1_{\mathrm{per}}(Y;\R^n)\times H^1_{\mathrm{per}}(Y;\R^n)\rightarrow \R$ be defined by \eqref{bil form a}, $C_{\delta}>0$ be defined as in Lemma \ref{Lmm: b_mu}, and let $P_h\subset H^1_{\mathrm{per},0}(Y;\R^n)$ be a closed linear subspace of $H^1_{\mathrm{per},0}(Y;\R^n)$. Then, the following assertions hold.
\begin{itemize}
\item[(i)] There exists a unique $p\in H^1_{\mathrm{per},0}(Y;\R^n)$ such that $b(w,p)=(\gamma,A:Dw)_{L^2(Y)}$ for all $w\in H^1_{\mathrm{per},0}(Y;\R^n)$. Further, there holds $\tilde{r} = 1 - \nabla\cdot p$.
\item[(ii)] There exists a unique $p_h\in P_h$ such that $b(w_h,p_h)=(\gamma,A:Dw_h)_{L^2(Y)}$ for all $w_h\in P_h$.
\item[(iii)] Setting $\tilde{r}_h:=1-\nabla\cdot p_h\in L^2_{\mathrm{per}}(Y)$ we have that
\begin{align*}
\|\tilde{r}-\tilde{r}_h\|_{L^2(Y)} = \|\nabla\cdot (p-p_h)\|_{L^2(Y)}\leq \left(1+\sqrt{n}\frac{\Lambda}{\lambda}\right)C_{\delta} \inf_{w_h\in P_h}\|D(p-w_h)\|_{L^2(Y)},
\end{align*}
where $p,p_h\in H^1_{\mathrm{per},0}(Y;\R^n)$ are the functions from (i)--(ii).
\end{itemize}
\end{lmm}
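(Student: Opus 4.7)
The plan is to set up (i) and (ii) as Lax--Milgram problems on the Hilbert space $H^1_{\mathrm{per},0}(Y;\R^n)$ equipped with the norm $w\mapsto \|Dw\|_{L^2(Y)}$, which is equivalent to the $H^1$-norm by the mean-zero Poincar\'{e} inequality. Once $b$ is shown to be coercive and bounded on this space, the Lax--Milgram theorem delivers (i) and (ii), while part (iii) follows from the C\'{e}a lemma for nonsymmetric coercive bilinear forms. The identification $\tilde r = 1-\nabla\cdot p$ in (i) will then be obtained by testing the equation only against gradient fields and invoking the uniqueness part of Theorem \ref{Thm: invmeas}.

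The key structural ingredient is a Miranda--Talenti-type identity on the torus: two integrations by parts yield $\int_Y \partial_i w_i\,\partial_j w_j = \int_Y \partial_j w_i\,\partial_i w_j$ for $w\in H^1_{\mathrm{per}}(Y;\R^n)$, which after expanding $|Dw - Dw^{\mathrm{T}}|^2$ produces
\begin{equation*}
\|Dw\|_{L^2(Y)}^2 = \|\nabla\cdot w\|_{L^2(Y)}^2 + S(w,w).
\end{equation*}
Combining this with the Cordes estimate $|\gamma A - I_n|\le \sqrt{1-\delta}$ via the splitting $\gamma A = I_n + (\gamma A - I_n)$ will give
\begin{equation*}
b(w,w) = \|\nabla\cdot w\|^2 + S(w,w) + ((\gamma A-I_n):Dw,\,\nabla\cdot w)_{L^2(Y)} \ge (1-\sqrt{1-\delta})\|Dw\|^2 = C_\delta^{-1}\|Dw\|^2,
\end{equation*}
where I also use $\|\nabla\cdot w\|\le \|Dw\|$, a by-product of the same identity. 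Boundedness $|b(w,\tilde w)|\le (1+\sqrt{n}\Lambda/\lambda)\|Dw\|\|D\tilde w\|$ is a direct Cauchy--Schwarz estimate using $\|\gamma A\|_{L^\infty(Y)}\le \sqrt{n}\Lambda/\lambda$ (from $\gamma\le 1/\lambda$ and $|A|\le \sqrt{n}\Lambda$) together with $S(w,w)\le\|Dw\|^2$.

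To verify $\tilde r = 1 - \nabla\cdot p$, I test the equation defining $p$ with $w = \nabla\fhi$ for arbitrary $\fhi\in H^2_{\mathrm{per},0}(Y)$. Since $D\nabla\fhi = D^2\fhi$ is symmetric, the stabilization vanishes, $S(\nabla\fhi,p) = 0$, and the equation collapses to $(\gamma A:D^2\fhi,\nabla\cdot p)_{L^2(Y)} = (\gamma, A:D^2\fhi)_{L^2(Y)}$. Rearranging gives $(1-\nabla\cdot p,\tilde A:D^2\fhi)_{L^2(Y)} = 0$ for all $\fhi\in H^2_{\mathrm{per},0}(Y)$, and this trivially extends to $\fhi\in H^2_{\mathrm{per}}(Y)$ since constants are annihilated by $D^2$. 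Together with $\int_Y (1-\nabla\cdot p) = 1$ by periodicity, the uniqueness of the invariant measure in Theorem \ref{Thm: invmeas}(ii) forces $1-\nabla\cdot p = \tilde r$.

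Finally, part (ii) is immediate from Lax--Milgram on the closed subspace $P_h$. For part (iii), Galerkin orthogonality $b(w_h, p-p_h) = 0$ for all $w_h\in P_h$ combined with coercivity gives, for arbitrary $w_h\in P_h$,
\begin{equation*}
C_\delta^{-1}\|D(p-p_h)\|^2 \le b(p-p_h,p-p_h) = b(p-w_h,p-p_h) \le (1+\sqrt{n}\Lambda/\lambda)\|D(p-w_h)\|\|D(p-p_h)\|,
\end{equation*}
yielding $\|D(p-p_h)\|\le (1+\sqrt{n}\Lambda/\lambda)C_\delta\,\|D(p-w_h)\|$, and the asserted bound follows on infimising over $w_h$ and using $\|\nabla\cdot v\|_{L^2(Y)}\le \|Dv\|_{L^2(Y)}$. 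The main obstacle I anticipate is getting the periodic integration-by-parts identity right, as it simultaneously delivers the coercivity constant and the sharp bound $\|\nabla\cdot v\|\le\|Dv\|$; once that identity is in hand, Lax--Milgram and the nonsymmetric C\'{e}a lemma slot the rest into place.
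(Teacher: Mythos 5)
Your proposal is correct and follows essentially the same route as the paper: the periodic integration-by-parts identity $\|Dw\|_{L^2(Y)}^2=\|\nabla\cdot w\|_{L^2(Y)}^2+S(w,w)$ for coercivity with constant $C_\delta^{-1}$, the bound $\|\gamma\,|A|\,\|_{L^\infty}\le\sqrt{n}\,\Lambda/\lambda$ for continuity, Lax--Milgram for (i)--(ii), testing with $w=\nabla\fhi$ (so that $S(\nabla\fhi,p)=0$) plus uniqueness of the invariant measure to identify $\tilde r=1-\nabla\cdot p$, and C\'{e}a's lemma with $\|\nabla\cdot v\|\le\|Dv\|$ for (iii). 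No substantive differences.
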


\subsubsection{Approximation of $r$}

In view of Lemma \ref{Lmm: H2 approx rtilde} and Lemma \ref{Lmm: Approx rtilde mixed} we are able to obtain a sequence of approximations $(\tilde{r}_h)_{h}\subset L^2_{\mathrm{per}}(Y)$ to $\tilde{r}$, i.e., $\|\tilde{r}-\tilde{r}_h\|_{L^2(Y)}\rightarrow 0$ as $h\searrow 0$, by choosing a suitable finite element space $\Psi_h\subset H^2_{\mathrm{per},0}(Y)$ (for the method from Lemma \ref{Lmm: H2 approx rtilde}) or $P_h\subset H^1_{\mathrm{per},0}(Y;\R^n)$ (for the method from Lemma \ref{Lmm: Approx rtilde mixed}) corresponding to a shape-regular periodic triangulation $\calT_h$ with mesh-size $h > 0$. It is also standard that the results from Lemma \ref{Lmm: H2 approx rtilde} and \ref{Lmm: Approx rtilde mixed} imply convergence rates depending on the regularity of $\psi$ by using interpolation estimates. Then, in view of \eqref{num r motivation}, we can obtain an approximation to $r$.

\begin{thrm}[approximation of $r$]\label{Thm: approx of r}
Let $A\in \mathcal{M}(\lambda,\Lambda)$ for some $\lambda,\Lambda > 0$ and suppose that \eqref{Cordes} holds. Let $r\in L^2_{\mathrm{per}}(Y)$ denote the invariant measure to $A$ and let $\tilde{r}\in L^2_{\mathrm{per}}(Y)$ denote the invariant measure to $\tilde{A}:=\gamma A$, where $\gamma\in L^{\infty}_{\mathrm{per}}(Y)$ is defined as in Remark \ref{Rk: Cordes holds}. Let $(\tilde{r}_h)_{h>0}\subset L^2_{\mathrm{per}}(Y)$ with $\|\tilde{r}-\tilde{r}_h\|_{L^2(Y)}\rightarrow 0$ as $h\searrow 0$. Then, for $h> 0$ sufficiently small we have that 
\begin{align}\label{rh defn}
r_h:=c_h^{-1} \gamma\tilde{r}_h\in L^2_{\mathrm{per}}(Y),\qquad c_h:= (\gamma,\tilde{r}_h)_{L^2(Y)}
\end{align}
is well-defined and satisfies the bound
\begin{align*}
\|r-r_h\|_{L^2(Y)}\leq \frac{\Lambda}{\lambda^2}\left(1+\frac{\Lambda^2}{\lambda}\right)\left(1+\frac{\Lambda}{\lambda^2}\|r\|_{L^2(Y)}\right)\|\tilde{r}-\tilde{r}_h\|_{L^2(Y)}.
\end{align*}
\end{thrm}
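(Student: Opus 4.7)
The proof is purely algebraic, so the plan is to first establish a uniform lower bound on the normalizing constant $c := (\gamma,\tilde r)_{L^2(Y)}$, transfer this bound to $c_h$ for $h$ small, and then control $r-r_h$ via a telescoping decomposition. The key non-trivial input is the nonnegativity and unit integral of $\tilde r$, both furnished by Theorem~\ref{Thm: invmeas}(ii).

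\textbf{Step 1: a lower bound for $c$.} Since $\tilde r\geq 0$ a.e.\ in $Y$ and $\int_Y \tilde r = 1$ by Theorem~\ref{Thm: invmeas}(ii), and since $\gamma\geq \lambda/\Lambda^2$ a.e.\ in $Y$ by Remark~\ref{Rk: Cordes holds}, I would observe that
\begin{align*}
c = \int_Y \gamma\,\tilde r \;\geq\; \frac{\lambda}{\Lambda^2}\int_Y \tilde r \;=\; \frac{\lambda}{\Lambda^2}>0,
\end{align*}
so that $c$ is explicitly bounded below in terms of $\lambda,\Lambda$ alone.

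\textbf{Step 2: well-definedness of $r_h$ for small $h$.} By Cauchy--Schwarz and the $L^{\infty}$-bound on $\gamma$,
\begin{align*}
\lvert c - c_h\rvert = \lvert(\gamma,\tilde r - \tilde r_h)_{L^2(Y)}\rvert \;\leq\; \frac{\Lambda}{\lambda^2}\,\|\tilde r - \tilde r_h\|_{L^2(Y)}.
\end{align*}
Since $\|\tilde r - \tilde r_h\|_{L^2(Y)}\to 0$ as $h\searrow 0$, I would pick $h_0>0$ such that for $0<h<h_0$ one has $\|\tilde r - \tilde r_h\|_{L^2(Y)}\leq \frac{\lambda^3}{2\Lambda^3}$, which forces $\lvert c - c_h\rvert\leq \frac{\lambda}{2\Lambda^2}$ and hence $c_h\geq c/2\geq \frac{\lambda}{2\Lambda^2}>0$. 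This guarantees $r_h=c_h^{-1}\gamma\tilde r_h\in L^2_{\mathrm{per}}(Y)$ is well-defined.

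\textbf{Step 3: decomposition and estimate.} Write
\begin{align*}
r - r_h \;=\; c_h^{-1}\gamma(\tilde r - \tilde r_h) \;+\; \bigl(c^{-1} - c_h^{-1}\bigr)\gamma\tilde r \;=\; c_h^{-1}\gamma(\tilde r - \tilde r_h) \;+\; \frac{c_h-c}{c_h}\,r,
\end{align*}
since $c^{-1}\gamma\tilde r = r$. Taking $L^2(Y)$-norms, using $|c-c_h|\leq \|\gamma\|_{L^{\infty}}\|\tilde r - \tilde r_h\|_{L^2(Y)}$, and collecting factors of $\|\tilde r-\tilde r_h\|_{L^2(Y)}$ yields
\begin{align*}
\|r-r_h\|_{L^2(Y)} \;\leq\; \frac{\|\gamma\|_{L^{\infty}}}{c_h}\bigl(1+\|r\|_{L^2(Y)}\bigr)\,\|\tilde r - \tilde r_h\|_{L^2(Y)}.
\end{align*}
Finally, insert $\|\gamma\|_{L^{\infty}}\leq \Lambda/\lambda^2$ together with an explicit lower bound for $c_h$ (either $c_h\geq c/2\geq \lambda/(2\Lambda^2)$, or the sharper $c_h\geq c-|c-c_h|$, arranged in the factorized form $\|\gamma\|_{L^\infty}\cdot(1+\|\gamma^{-1}\|_{L^\infty})$ using $\|\gamma^{-1}\|_{L^\infty}\leq \Lambda^2/\lambda$) to recover the stated constant.

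\textbf{Main obstacle.} There is no real analytic obstruction; the only subtlety is Step~1, where one must not forget to use both $\tilde r\geq 0$ \emph{and} $\int_Y\tilde r=1$ from Theorem~\ref{Thm: invmeas}(ii) to get a constant-free lower bound on $c$. Without positivity, one could only estimate $c$ via $\|\tilde r\|_{L^2(Y)}$, which would not suffice to bound $c_h$ from below once $\tilde r_h$ is merely an $L^2$-approximation. The rest is bookkeeping of the ellipticity constants $\lambda,\Lambda$.
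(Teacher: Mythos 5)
Your proposal is correct and follows essentially the same route as the paper's proof: the lower bound $c\geq\lambda/\Lambda^2$ via $\tilde r\geq 0$ and $\int_Y\tilde r=1$, the continuity estimate $|c-c_h|\leq\frac{\Lambda}{\lambda^2}\|\tilde r-\tilde r_h\|_{L^2(Y)}$ to get well-definedness of $r_h$ and the bound $c_h^{-1}\leq 1+\Lambda^2/\lambda$ for small $h$, and the same two-term splitting of $r-r_h$ combined with $\gamma\tilde r=cr$. The only (shared, cosmetic) imprecision is in the very last bookkeeping step, where matching the exact stated constant uses $\Lambda/\lambda^2\geq 1$; your intermediate bound $c_h^{-1}\frac{\Lambda}{\lambda^2}(1+\|r\|_{L^2(Y)})\|\tilde r-\tilde r_h\|_{L^2(Y)}$ is the same one the paper derives.
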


\subsection{Approximation of the homogenized problem}

In view of Lemma \ref{Lmm: H2 approx rtilde}, Lemma \ref{Lmm: Approx rtilde mixed}, and Theorem \ref{Thm: approx of r}, we now know how to obtain a sequence of approximations $(r_h)_h\subset L^2_{\mathrm{per}}(Y)$ to  $r$ with $\|r-r_h\|_{L^2(Y)}\rightarrow 0$. Recalling that the effective coefficient matrix $\bar{A}\in \R^{n\times n}_{\mathrm{sym}}$ is defined as $\bar{A}:=\int_Y rA$, we introduce the approximate effective coefficient matrix $\bar{A}_h\in \R^{n\times n}_{\mathrm{sym}}$ by
\begin{align}\label{Abar h def}
\bar{A}_{h}:=\int_Y r_h A.
\end{align}
We have the following error bound for this approximation of the effective coefficient matrix:
\begin{lmm}[approximation of $\bar{A}$]\label{Lmm: Approx Abar}
Let $A\in \mathcal{M}(\lambda,\Lambda)$ for some $\lambda,\Lambda > 0$ and suppose that \eqref{Cordes} holds. Let $r\in L^2_{\mathrm{per}}(Y)$ denote the invariant measure to $A$ and let $(r_h)_{h>0}\subset L^2_{\mathrm{per}}(Y)$ be such that $\|r-r_h\|_{L^2(Y)}\rightarrow 0$ as $h\searrow 0$. Let $\bar{A}\in \R^{n\times n}_{\mathrm{sym}}$ denote the effective coefficient matrix corresponding to $A$, and let $\bar{A}_h\in \R^{n\times n}_{\mathrm{sym}}$ be defined by \eqref{Abar h def}. Then, we have that
\begin{align*}
\lvert \bar{A} - \bar{A}_{h}\rvert \leq \sqrt{n}\Lambda \|r-r_h\|_{L^1(Y)}\leq \sqrt{n}\Lambda \|r-r_h\|_{L^2(Y)}
\end{align*}
and $\bar{A}_h$ is positive definite for $h>0$ sufficiently small.
\end{lmm}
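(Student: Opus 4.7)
The approach is direct: the error $\bar{A}-\bar{A}_h$ is simply an $L^2$-weighted integral of $A$ against the scalar $r-r_h$, so a pointwise Frobenius bound on $A$ combined with Hölder's inequality handles everything.

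First, I would observe that
\begin{equation*}
\bar{A} - \bar{A}_h = \int_Y (r-r_h)\,A\,dy,
\end{equation*}
so that by the triangle inequality for matrix-valued integrals,
\begin{equation*}
\lvert \bar{A}-\bar{A}_h\rvert \leq \int_Y \lvert r-r_h\rvert\,\lvert A\rvert\,dy.
\end{equation*}
The next ingredient is the pointwise estimate $\lvert A(y)\rvert\leq\sqrt{n}\,\Lambda$ almost everywhere, which follows from the fact that $A\in \mathcal{M}(\lambda,\Lambda)$ is symmetric with eigenvalues in $[\lambda,\Lambda]$, so its Frobenius norm squared is bounded by $n\Lambda^{2}$. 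Substituting this bound and using Cauchy--Schwarz (recalling that $Y$ has unit measure), one obtains the claimed chain
\begin{equation*}
\lvert \bar{A}-\bar{A}_h\rvert \leq \sqrt{n}\,\Lambda\,\lVert r-r_h\rVert_{L^1(Y)} \leq \sqrt{n}\,\Lambda\,\lVert r-r_h\rVert_{L^2(Y)}.
\end{equation*}

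For positive definiteness, I would use that Theorem \ref{Thm: Conv} guarantees $\bar{A}\xi\cdot\xi\geq \lambda\lvert\xi\rvert^2$ for all $\xi\in \R^n$, together with
\begin{equation*}
\bar{A}_h\xi\cdot\xi = \bar{A}\xi\cdot\xi + (\bar{A}_h-\bar{A})\xi\cdot\xi \geq (\lambda - \lvert \bar{A}_h-\bar{A}\rvert)\lvert\xi\rvert^2.
\end{equation*}
Since the first part of the lemma together with the hypothesis $\lVert r-r_h\rVert_{L^2(Y)}\rightarrow 0$ implies $\lvert \bar{A}-\bar{A}_h\rvert \rightarrow 0$, we have $\lvert \bar{A}_h-\bar{A}\rvert < \lambda$ for all sufficiently small $h>0$, yielding positive definiteness of $\bar{A}_h$ with an effective ellipticity constant that is uniform in $h$ on that range.

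There is essentially no obstacle here: the whole argument is a two-line estimate plus a standard eigenvalue-perturbation observation. The only minor point worth stating carefully is the pointwise bound $\lvert A\rvert\leq \sqrt{n}\,\Lambda$, which is where the factor $\sqrt{n}$ enters; everything else is Hölder and the triangle inequality.
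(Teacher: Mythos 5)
Your proof is correct and follows essentially the same route as the paper: the identity $\bar{A}-\bar{A}_h=\int_Y (r-r_h)A$, the pointwise bound $\lvert A\rvert\leq\sqrt{n}\,\Lambda$, and H\"older on the unit cell, with positive definiteness obtained by perturbing the ellipticity bound $\bar{A}\xi\cdot\xi\geq\lambda\lvert\xi\rvert^2$ from Theorem \ref{Thm: Conv}. The paper compresses this to one line, but your spelled-out version of the positive-definiteness step is exactly what its ``the claim follows'' is standing in for.
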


With the approximate effective coefficient matrix at hand, we can obtain an approximation to the solution of the homogenized problem:
\begin{lmm}[approximation of $u$]\label{Lmm: Approx u}
Suppose that we are in the situation of Lemma \ref{Lmm: Approx Abar}. Let $\Omega\subset \R^n$ be a bounded convex domain, $f\in L^2(\Omega)$, $g\in H^2(\Omega)$, and let $u\in H^2(\Omega)$ be the solution to the homogenized problem \eqref{u problem}. Then, for $h>0$ sufficiently small, there exists a unique solution $u_h\in H^2(\Omega)$ to 
\begin{align}\label{uh prob}
\begin{split}
\bar{L}_h u_h:=-\bar{A}_h:D^2 u_h &= f\quad\text{in }\Omega,\\
u_h &= g\quad \text{on }\partial\Omega,
\end{split}
\end{align}
and we have the bound 
\begin{align*}
\|u-u_h\|_{H^2(\Omega)}\leq C\lvert \bar{A}-\bar{A}_h\rvert \left(\|f\|_{L^2(\Omega)} + \|g\|_{H^2(\Omega)}\right)
\end{align*}
for some constant $C = C(\mathrm{diam}(\Omega),\lambda,\Lambda,n)>0$.
\end{lmm}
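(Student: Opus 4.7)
The plan is to reduce the proof to the $H^2$-regularity theory for constant-coefficient elliptic equations on convex domains, applied twice: once to establish well-posedness of \eqref{uh prob}, and once to the boundary-value problem satisfied by the error $w_h := u - u_h$.

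First I would establish a uniform $H^2$-estimate for problems of the form $-C:D^2 v = \tilde f$ in $\Omega$, $v = 0$ on $\partial\Omega$, for any $C\in \R^{n\times n}_{\mathrm{sym}}$ with eigenvalues contained in a bounded interval $[\mu_1,\mu_2]\subset (0,\infty)$. The standard approach is a linear change of variables based on the eigendecomposition $C=Q^{\mathrm{T}}\Sigma Q$ to reduce to a Poisson problem $-\Delta \tilde v = \tilde{\tilde f}$ on a convex image $\tilde \Omega$ of $\Omega$ under a linear map of condition number bounded by $\sqrt{\mu_2/\mu_1}$; combining the Miranda--Talenti inequality $\|D^2 \tilde v\|_{L^2(\tilde\Omega)}\le \|\Delta \tilde v\|_{L^2(\tilde\Omega)}$ with the Poincar\'e inequality on $\tilde\Omega$ and transforming back yields $\|v\|_{H^2(\Omega)}\le C_0 \|\tilde f\|_{L^2(\Omega)}$ with $C_0=C_0(\mathrm{diam}(\Omega),\mu_1,\mu_2,n)$. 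Since Lemma \ref{Lmm: Approx Abar} guarantees that $\bar{A}_h$ is symmetric positive definite with eigenvalues in, say, $[\lambda/2,2\Lambda]$ once $h$ is sufficiently small, applying this estimate to $v := u_h - g$ with $\tilde f := f+\bar A_h : D^2 g$ yields existence, uniqueness, and a bound of the form $\|u_h\|_{H^2(\Omega)}\le C(\|f\|_{L^2(\Omega)}+\|g\|_{H^2(\Omega)})$ with $C=C(\mathrm{diam}(\Omega),\lambda,\Lambda,n)$.

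For the error bound, I would subtract $-\bar{A}_h:D^2 u_h = f$ from $-\bar{A}:D^2 u = f$ and rewrite to obtain that $w_h\in H^2(\Omega)\cap H^1_0(\Omega)$ solves
\begin{align*}
-\bar{A}_h : D^2 w_h = (\bar{A}-\bar{A}_h):D^2 u \quad\text{in }\Omega,\qquad w_h = 0\quad\text{on }\partial\Omega.
\end{align*}
The right-hand side belongs to $L^2(\Omega)$ with norm at most $\lvert \bar{A}-\bar{A}_h\rvert\, \|D^2 u\|_{L^2(\Omega)}$, by the pointwise Cauchy--Schwarz inequality for the Frobenius inner product together with the fact that $\bar A - \bar A_h$ is constant. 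Applying the $H^2$-estimate from the previous step to $w_h$ yields $\|w_h\|_{H^2(\Omega)}\le C\lvert \bar{A}-\bar{A}_h\rvert\, \|u\|_{H^2(\Omega)}$. Because the same constant-coefficient regularity argument also applies to $u$ (the eigenvalues of $\bar{A}$ lie in $[\lambda,\Lambda]$ by Theorem \ref{Thm: Conv}), we obtain $\|u\|_{H^2(\Omega)}\le C(\|f\|_{L^2(\Omega)}+\|g\|_{H^2(\Omega)})$, which combines with the previous display to give the claimed bound.

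The main obstacle is keeping the constant $C$ independent of $h$ and expressed solely in terms of $\mathrm{diam}(\Omega),\lambda,\Lambda,n$, rather than via the Cordes parameter of $\bar{A}_h$, which is not automatically controlled when $n\ge 3$. This is precisely why I would use the constant-coefficient Miranda--Talenti reduction in place of a direct appeal to Theorem \ref{Thm: Well-posedness of uepsproblem}: it produces an $H^2$-estimate whose constant depends only on the eigenvalue bounds of the coefficient matrix, which are themselves controlled uniformly in $h$ by Lemma \ref{Lmm: Approx Abar}.
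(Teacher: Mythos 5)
Your argument is correct and follows essentially the route the paper delegates to Lemma 3.4 of \cite{CSS20}: derive the error equation $-\bar{A}_h:D^2(u-u_h)=(\bar{A}-\bar{A}_h):D^2u$ with homogeneous boundary data and invoke $H^2$ well-posedness of the constant-coefficient Dirichlet problem on a bounded convex domain, together with the eigenvalue control on $\bar{A}_h$ from Lemma \ref{Lmm: Approx Abar}. Your choice of the eigendecomposition/Miranda--Talenti reduction instead of the Cordes-based well-posedness is a sensible way to obtain a constant depending only on $\mathrm{diam}(\Omega),\lambda,\Lambda,n$ (a general constant symmetric positive definite matrix need not satisfy \eqref{Cordes} for $n\ge 3$), which matches the dependence asserted in the lemma.
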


Let us note that, since $\bar{A}_h$ is a constant symmetric positive definite matrix for $h>0$ sufficiently small, the solution to \eqref{uh prob} can be approximated by a standard $H^1(\Omega)$-conforming finite element method. If an $H^2$-approximation of $u_h$ is desired, the function $\tilde{u}_h:=u_h-g\in H^2(\Omega)\cap H^1_0(\Omega)$ can be approximated by an $H^2(\Omega)$-conforming finite element method based on the variational formulation $(\bar{L}_h\tilde{u}_h,\bar{L}_h v)_{L^2(\Omega)} = (f-\bar{L}_h g,\bar{L}_h v)_{L^2(\Omega)}$ for all $v\in H^2(\Omega)\cap H^1_0(\Omega)$.

\section{Numerical Experiments}\label{Sec: Exp}

In this section, we perform numerical experiments illustrating the theoretical results from Section \ref{Sec: Num methods}. We take $n=2$ and choose $A\in L^{\infty}(\R^2;\R^{2\times 2}_{\mathrm{sym}})$ as
\begin{align}\label{A for numexp}
A(y):=\mathrm{diag}(1-a(y),a(y)),\qquad a(y_1,y_2):= \frac{3-2\omega (y_1)\sin(2\pi y_2)}{8+(\pi^2 \theta(y_1)-2\omega (y_1))\sin(2\pi y_2)}
\end{align}
for $y=(y_1,y_2)\in\R^2$, where 
\begin{align*}
\omega(t):=\mathrm{sign}(\sin(2\pi t)),\qquad \theta(t):=S(t)(1-\omega(t)S(t)),\qquad S(t):=1-2(t-\floor{t})
\end{align*}
for $t\in \R$. Note that $A\in \mathcal{M}(\frac{1}{10},\frac{9}{10})$ since $a\in L^{\infty}_{\mathrm{per}}(Y)$ and $\frac{1}{10}\leq a\leq \frac{5}{6}$ a.e. in $\R^2$ (note $\lvert \pi^2 \theta - 2\omega\rvert \leq 2$ a.e. in $\R$). In particular, in view of Remark \ref{Rk: Cordes in 1D,2D}, $A$ satisfies the Cordes condition \eqref{Cordes} with $\delta = \frac{1}{9}$. An illustration of the discontinuous function $a$ is provided in Figure \ref{Fig: plota}.

\begin{figure}
	\begin{subfigure}{0.49\textwidth}
		\includegraphics[width=\textwidth]{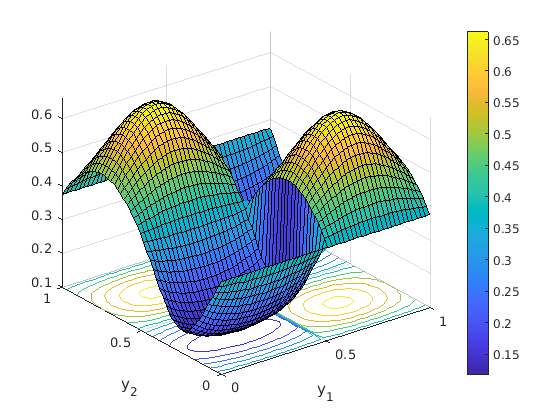}
		\subcaption{$a(y_1,y_2)$ for $(y_1,y_2)\in Y$.}
	\end{subfigure}
	\begin{subfigure}{0.49\textwidth}
		\includegraphics[width=\textwidth]{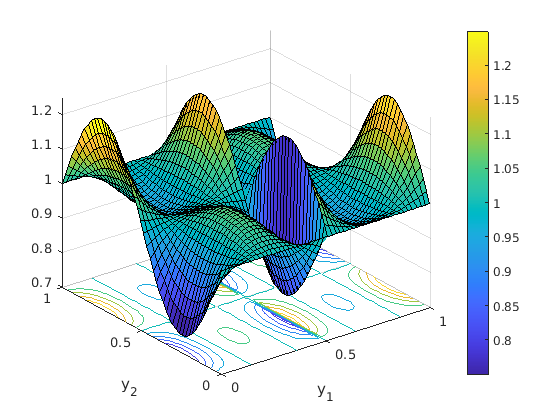}
		\subcaption{$r(y_1,y_2)$ for $(y_1,y_2)\in Y$.}
	\end{subfigure}
	\caption{Plot of the function $a$ defined in \eqref{A for numexp} and plot of the invariant measure $r$ to $A=\mathrm{diag}(1-a,a)$ given by \eqref{r fornumexp}.}
	\label{Fig: plota}
\end{figure}

By Theorem \ref{Thm: C+aM}, noting that $A = C+aM$ with $C:=\mathrm{diag}(1,0)$ and $M:=\mathrm{diag}(-1,1)$, the invariant measure to $A$ is given by 
\begin{align}\label{r fornumexp}
r(y) = 1+M:D^2 [T(A,a)](y) = 1 + \frac{1}{8}(\pi^2 \theta(y_1)-2\omega (y_1))\sin(2\pi y_2)
\end{align}
for $y=(y_1,y_2)\in \R^2$, where we have used that $T(A,a)\in H^2_{\mathrm{per},0}(Y)$ (recall Definition \ref{Def: T(A,a)}) is given by $[T(A,a)](y) = -\frac{1}{32}\theta(y_1)\sin(2\pi y_2)$ for $y=(y_1,y_2)\in \R^2$. The effective coefficient matrix $\bar{A}:=\int_Y rA$ is then given by
\begin{align*}
\bar{A} = C + (r,a)_{L^2(Y)} M = C + \frac{3}{8} M = \frac{1}{8}\mathrm{diag}(5,3).
\end{align*} 
A plot of the discontinuous function $r$ is provided in Figure \ref{Fig: plota}.

In our numerical experiment, we use FreeFem\texttt{++} \cite{Hec12} to approximate $r$ by Theorem \ref{Thm: approx of r} in combination with the method from Lemma \ref{Lmm: Approx rtilde mixed}, where we choose $P_h$ to be the finite-dimensional subspace of $H^1_{\mathrm{per}}(Y;\R^2)$ consisting of vector fields whose components are continuous $Y$-periodic piecewise affine functions with zero mean over $Y$ on a periodic shape-regular triangulation $\mathcal{T}_h$ of $\overline{Y}$ into triangles with vertices $\{(ih,jh)\}_{1\leq i,j\leq N}$ where $N = \frac{1}{h}\in \N$. The integrals in the computation of $c_h$ in \eqref{rh defn} and $\bar{A}_h$ in \eqref{Abar h def} have been obtained using the default quadrature formula in FreeFem\texttt{++} on a fine mesh. The results are presented in Figure \ref{Fig: err curves}. 

By Lemma \ref{Lmm: Approx rtilde mixed} and Theorem \ref{Thm: approx of r} we have that $\|r-r_h\|_{L^2(Y)}\leq C \inf_{w_h\in P_h}\|D(p-w_h)\|_{L^2(Y)}$ for some constant $C>0$, where $p\in H^1_{\mathrm{per},0}(Y;\R^2)$ is the function from Lemma \ref{Lmm: Approx rtilde mixed}(i). For the approximation of $r$, we observe convergence of order $\calO(h^{s})$ in the $L^2(Y)$-norm for some $s\in (0,1)$ in Figure \ref{Fig: err curves}. This indicates that $p\in H^{1+s}(Y)$ in which case we have by standard interpolation inequalities that $\inf_{w_h\in P_h}\|D(p-w_h)\|_{L^2(Y)}\leq Ch^s \|p\|_{H^{1+s}(Y)}$. In Figure \ref{Fig: err curves}, we observe the superconvergence $\|r-r_h\|_{L^2(Y)} = \calO(h)$ for $h = 2^{-i}$, $i\in \N$, i.e., when there is no element of the triangulation whose interior intersects the line $\{y_1 = \frac{1}{2}\}$ along which $r$ has a jump.

For the approximation of $\bar{A}$, we observe that $\lvert \bar{A}-\bar{A}_h\rvert = \calO(h)$, and the superconvergence $\lvert \bar{A}-\bar{A}_h\rvert = \calO(h^2)$ for $h = 2^{-i}$, $i\in \N$.

\begin{figure}
	\begin{subfigure}{0.49\textwidth}
		\includegraphics[width=\textwidth]{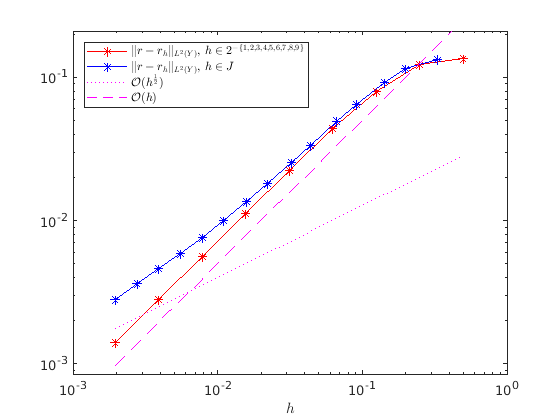}
		\subcaption{$\|r-r_h\|_{L^2(Y)}$}
	\end{subfigure}
	\begin{subfigure}{0.49\textwidth}
		\includegraphics[width=\textwidth]{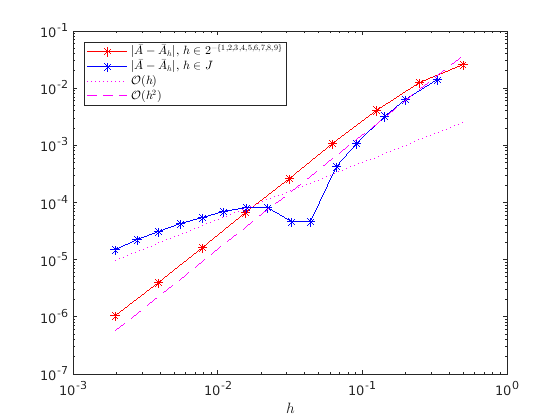}
		\subcaption{$\lvert \bar{A}-\bar{A}_h\rvert$}
	\end{subfigure}
	\caption{Approximation error for the approximation of the invariant measure $r$ and the effective coefficient matrix $\bar{A}$ from Section \ref{Sec: Exp}. We observe two curves, corresponding to whether or not there are elements of the triangulation whose interior intersect the line $\{y_1 = \frac{1}{2}\}$, i.e., the set along which $r$ exhibits a jump.}
	\label{Fig: err curves}
\end{figure}

\section{Collection of the Proofs}\label{Sec: Pfs}

\subsection{Proofs for Section \ref{Sec: Framework}}

\begin{proof}[Proof of Theorem \ref{Thm: Well-posedness of uepsproblem}]

First, note that there exists a constant $\tilde{c}_0 = \tilde{c}_0(\mathrm{diam}(\Omega),n)>0$ such that $\|v\|_{H^2(\Omega)}\leq \tilde{c}_0 \|\Delta v\|_{L^2(\Omega)}$ for any $v\in H^2(\Omega)\cap H^1_0(\Omega)$. Since $\tilde{f}_{\eps}:=f-L_{\eps}g\in L^2(\Omega)$, we know from Theorem 3 in \cite{SS13} that there exists a unique function $\tilde{u}_{\eps}\in H^2(\Omega)\cap H^1_0(\Omega)$ such that $L_{\eps} \tilde{u}_{\eps} = \tilde{f}_{\eps}$ a.e. in $\Omega$, and we have that 
\begin{align*}
\|\tilde{u}_{\eps}\|_{H^2(\Omega)} \leq \tilde{c}_0 C_{\delta}\|\gamma^{\eps}\tilde{f}_{\eps}\|_{L^2(\Omega)},
\end{align*}
where $C_{\delta}:=(1-\sqrt{1-\delta})^{-1}$ and $\gamma^{\eps}:=\gamma(\frac{\cdot}{\eps})$ with $\gamma$ defined in Remark \ref{Rk: Cordes holds}. As $\|\gamma\|_{L^\infty(\R^n)}\leq \frac{\Lambda}{\lambda^2}$ and $\|L_{\eps}g\|_{L^2(\Omega)}\leq c_1 \|D^2 g\|_{L^2(\Omega)}$ with $c_1:= \sqrt{n}\Lambda$, it follows that $\|\tilde{u}_{\eps}\|_{H^2(\Omega)}\leq c_2(\|f\|_{L^2(\Omega)}+\|g\|_{H^2(\Omega)})$ with $c_2:=\tilde{c}_0(1+c_1)C_{\delta}\frac{\Lambda}{\lambda^2}$. We find that $u_{\eps}:=\tilde{u}_{\eps} + g \in H^2(\Omega)$ is the unique solution to \eqref{ueps problem} in $H^2(\Omega)$ and we have the bound $\|u_{\eps}\|_{H^2(\Omega)}\leq \|\tilde{u}_{\eps}\|_{H^2(\Omega)} + \|g\|_{H^2(\Omega)}\leq (c_2+1)(\|f\|_{L^2(\Omega)}+\|g\|_{H^2(\Omega)})$. 
\end{proof}

\subsection{Proofs for Section \ref{Sec: Prelim}}

\begin{proof}[Proof of Lemma \ref{Lmm: b_mu}]
In view of Remark \ref{Rk: Cordes holds} and Remark \ref{Rk: norm on H^2_per,0} we have for any $\fhi\in H^2_{\mathrm{per}}(Y)$ that
\begin{align*}
b_{\mu}(\fhi,\fhi) &= \|\mu\fhi-\Delta \fhi\|_{L^2(Y)}^2 + \left((I_n-\gamma A):D^2 \fhi, \mu\fhi-\Delta \fhi\right)_{L^2(Y)} \\&\geq \|\mu\fhi-\Delta \fhi\|_{L^2(Y)}^2-\sqrt{1-\delta}\|D^2 \fhi\|_{L^2(Y)} \|\mu\fhi-\Delta \fhi\|_{L^2(Y)}.  
\end{align*} 
Finally, using that for any $\fhi\in H^2_{\mathrm{per}}(Y)$ we have that
\begin{align*}
\|D^2\fhi\|_{L^2(Y)}^2 = \|\Delta \fhi\|_{L^2(Y)}^2 \leq \mu^2 \|\fhi\|_{L^2(Y)}^2 +2 \mu \|\nabla \mu\|_{L^2(Y)}^2 + \|\Delta \fhi\|_{L^2(Y)}^2 =  \|\mu\fhi-\Delta \fhi\|_{L^2(Y)}^2,
\end{align*}
the claimed result follows.
\end{proof}

\subsection{Proofs for Section \ref{Sec: Per problems}}

\begin{proof}[Proof of Theorem \ref{Thm: invmeas}]
(i) Note that $b_0$ is coercive on $H^2_{\mathrm{per},0}(Y)$ by Lemma \ref{Lmm: b_mu} and that 
\begin{align}\label{boundedness b_0}
\lvert b_0(\fhi_1,\fhi_2)\rvert = \lvert (\gamma A:D^2 \fhi_1,\Delta \fhi_2)_{L^2(Y)}\rvert \leq \sqrt{n}\frac{\Lambda}{\lambda}\|\Delta \fhi_1\|_{L^2(Y)}\|\Delta\fhi_2\|_{L^2(Y)}\quad\forall \fhi_1,\fhi_2\in H^2_{\mathrm{per},0}(Y), 
\end{align}
where we have used Remark \ref{Rk: norm on H^2_per,0} and that $\gamma \lvert A\rvert = \frac{\mathrm{tr}(A)}{\lvert A\rvert} \leq  \frac{n\Lambda}{\sqrt{n}\lambda} = \sqrt{n}\frac{\Lambda}{\lambda}$ almost everywhere. Since $l:H^2_{\mathrm{per},0}(Y)\rightarrow \R$, $l(\fhi):= \int_Y \tilde{A}:D^2 \fhi$ is a bounded linear map with $\lvert l(\fhi)\rvert \leq \sqrt{n}\frac{\Lambda}{\lambda}\|\Delta\fhi\|_{L^2(Y)}$ for any $\fhi\in H^2_{\mathrm{per},0}(Y)$, we deduce from the Lax--Milgram theorem that there exists a unique $\psi\in H^2_{\mathrm{per},0}(Y)$ such that $b_0(\fhi,\psi) = l(\fhi)$ for any $\fhi\in H^2_{\mathrm{per},0}(Y)$ and we have the bound 
\begin{align*}
\|\Delta \psi\|_{L^2(Y)}^2 \leq C_{\delta}\, b_0(\psi,\psi) = C_{\delta}\, l(\psi)\leq \sqrt{n}\frac{\Lambda}{\lambda}C_{\delta}\|\Delta \psi\|_{L^2(Y)}.
\end{align*}

(ii) Since $\tilde{r}:=1-\Delta \psi \in L^2_{\mathrm{per}}(Y)$, $\int_Y \tilde{r} = 1$, and $(\tilde{r},\tilde{A}:D^2 \fhi)_{L^2(Y)} = l(\fhi)-b_0(\fhi,\psi) = 0$ for any $\fhi\in H^2_{\mathrm{per}}(Y)$, we have that $\tilde{r}$ is an invariant measure to $\tilde{A}$. Suppose $\hat{r}\in L^2_{\mathrm{per}}(Y)$ is another invariant measure to $\tilde{A}$. Then, as $\int_Y \hat{r} = 1$, there exists a unique $\xi\in H^2_{\mathrm{per},0}(Y)$ such that $\Delta \xi = 1-\hat{r}$. Since $b_0(\fhi,\xi) = l(\fhi)-(\hat{r},\tilde{A}:D^2 \fhi)_{L^2(Y)} = l(\fhi)$ for any $\fhi\in H^2_{\mathrm{per}}(Y)$, we have that $\xi = \psi$ and thus, $\hat{r} =  1-\Delta \psi = \tilde{r}$. Therefore, $\tilde{r}$ is the unique invariant measure to $\tilde{A}$.

It remains to show that $\tilde{r}\geq 0$ almost everywhere. For $k\in \N$ we set $a_{k,ij}:= a_{ij}\ast w_k$ for $1\leq i,j\leq n$, where $w_k:=k^n w(k\,\cdot)$ for some $w\in C_c^{\infty}(\R^n)$ with $w\geq 0$ in $\R^n$ and $\int_{\R^n} w = 1$. Then, $A_k:=(a_{k,ij})_{1\leq i,j\leq n}\in C^{\infty}_{\mathrm{per}}(Y;\R^{n\times n}_{\mathrm{sym}})\cap \mathcal{M}(\lambda,\Lambda)$ for all $k\in \N$ and we have that $\lim_{k\rightarrow\infty}\|A_k-A\|_{L^2(Y)}= 0$. We set $\gamma_k := \frac{\mathrm{tr}(A_k)}{\lvert A_k\rvert^2}\in C^{\infty}_{\mathrm{per}}(Y)$ and note that $\frac{\lambda}{\Lambda^2}\leq \gamma_k\leq \frac{\Lambda}{\lambda^2}$ for any $k\in \N$, as observed in Remark \ref{Rk: Cordes holds}. Let $\tilde{r}_k\in L^2_{\mathrm{per}}(Y)$ be the unique invariant measure to $\tilde{A}_k:=\gamma_k A_k$. Since $\tilde{A}_k\in C^{\infty}_{\mathrm{per}}(Y)$, we have that $\tilde{r}_k\in C^{\infty}_{\mathrm{per}}(Y)$ and it is known (see e.g., \cite{BLP11}) that $\tilde{r}_k > 0$ in $\R^n$. Setting $q_k := \tilde{r}_k\gamma_k\in C^{\infty}_{\mathrm{per}}(Y)$, we have that $q_k\geq \frac{\lambda}{\Lambda^2}\tilde{r}_k > 0$ in $\R^n$ and, by (i) and the first part of (ii), we find $\|q_k\|_{L^2(Y)}\leq \frac{\Lambda}{\lambda^2}\|\tilde{r}_k\|_{L^2(Y)}\leq C$ for all $k\in \N$ for some constant $C=C(\lambda,\Lambda,n,\delta)>0$. Therefore, there exists $q\in L^2_{\mathrm{per}}(Y)$ such that, upon passing to a subsequence, $q_k\weak q$ weakly in $L^2(Y)$. Thus, for any $\fhi\in C^{\infty}_{\mathrm{per}}(Y)$ we have
\begin{align*}
0 = (q_k, A_k:D^2 \fhi)_{L^2(Y)} = (q_k, A:D^2 \fhi)_{L^2(Y)} + (q_k, (A_k-A):D^2 \fhi)_{L^2(Y)}  \underset{k\rightarrow \infty}{\longrightarrow} (q, A:D^2 \fhi)_{L^2(Y)}.
\end{align*}
It follows that $q\in L^2_{\mathrm{per}}(Y)$ is a solution to $-D^2:(q A) = 0$ in $Y$. Note that $q\geq 0$ a.e. in $\R^n$ and $\int_Y q \geq \frac{\lambda}{\Lambda^2}>0$ as $q_k > 0$ in $\R^n$ and $\int_Y q_k \geq \frac{\lambda}{\Lambda^2}\int_Y \tilde{r}_k =  \frac{\lambda}{\Lambda^2}$ for any $k\in\N$. Recalling that $\gamma > 0$ a.e. in $\R^n$, we see that $\int_Y \frac{q}{\gamma} = \|\frac{q}{\gamma}\|_{L^1(Y)} > 0$. Setting $\tilde{q} := (\int_Y \frac{q}{\gamma})^{-1} \frac{q}{\gamma}\in L^2_{\mathrm{per}}(Y)$ and using uniqueness of the invariant measure to $\tilde{A}$, we see that $\tilde{r}=\tilde{q}\geq 0$ a.e. in $\R^n$.

(iii) This follows immediately from (ii) and the fact that $\frac{\lambda}{\Lambda^2}\leq \gamma\leq \frac{\Lambda}{\lambda^2}$ a.e. in $\R^n$. Note that the bound \eqref{r bound} follows from $\|r\|_{L^2(Y)} \leq \frac{\Lambda}{\lambda^2}  ( \gamma,\tilde{r} )_{L^2(Y)}^{-1}\|\tilde{r}\|_{L^2(Y)}$ and $( \gamma,\tilde{r} )_{L^2(Y)}\geq \frac{\lambda}{\Lambda^2}\int_Y \tilde{r} = \frac{\lambda}{\Lambda^2}$, where we have used that $\tilde{r}\geq 0$ almost everywhere and $\int_Y \tilde{r} = 1$.
\end{proof}

\begin{proof}[Proof of Theorem \ref{Thm: Main thm double div}]
(i) If $q\in L^2_{\mathrm{per}}(Y)$ is a solution to \eqref{q equation}, then $\int_Y f = (f,1)_{L^2(Y)} = (q,-A:D^2 1)_{L^2(Y)}= 0$, where $1$ denotes the constant function with value one. Now suppose that $\int_Y f = 0$. Then, by Lemma \eqref{Lmm: b_mu} and the Lax--Milgram theorem, there exists a unique $\eta\in H^2_{\mathrm{per},0}(Y)$ such that $b_0(\fhi,\eta)=(f,\fhi)_{L^2(Y)}$ for any $\fhi\in H^2_{\mathrm{per},0}(Y)$. Equivalently, since $\int_Y f = 0$, we have $b_0(\fhi,\eta)=(f,\fhi)_{L^2(Y)}$ for any $\fhi\in H^2_{\mathrm{per}}(Y)$. Setting $q := -\gamma\Delta\eta\in L^2_{\mathrm{per}}(Y)$, we have that $(q,-A:D^2 \fhi)_{L^2(Y)} = b_0(\fhi,\eta)=(f,\fhi)_{L^2(Y)}$ for any $\fhi\in H^2_{\mathrm{per}}(Y)$, i.e., $q$ is a solution to \eqref{q equation}.

(ii) Suppose that $q_1,q_2\in L^2_{\mathrm{per}}(Y)$ are solutions to \eqref{q equation}. Then, $w:=q_1-q_2\in L^2_{\mathrm{per}}(Y)$ satisfies $(w,-A:D^2 \fhi)_{L^2(Y)} = 0$ for all $\fhi\in H^2_{\mathrm{per}}(Y)$ and $\int_Y w = c$. This gives $w = cr$. Indeed, by uniqueness of $r$, we see that if $c\neq 0$ then $r = c^{-1} w$, and if $c = 0$ then $r = w+r$, i.e., $w = 0 = cr$.

(iii) Note that $q_0 = -\gamma\Delta \eta + (\gamma, \Delta \eta)_{L^2(Y)}r$, where $\eta\in H^2_{\mathrm{per},0}(Y)$ is as in the proof of (i). By Lemma \ref{Lmm: b_mu} and Remark \ref{Rk: norm on H^2_per,0} we have $\|\Delta \eta\|_{L^2(Y)}^2 \leq C_{\delta}b_0(\eta,\eta) = C_{\delta}(f,\eta)_{L^2(Y)}\leq \frac{n}{\pi^2}C_{\delta}\|f\|_{L^2(Y)}\|\Delta \eta\|_{L^2(Y)}$. Using that $\frac{\lambda}{\Lambda^2}\leq \gamma\leq \frac{\Lambda}{\lambda^2}$ a.e. in $\R^n$, we see that $\|q_0\|_{L^2(Y)}\leq \frac{\Lambda}{\lambda^2}\left(1+\|r\|_{L^2(Y)}\right) \|\Delta \eta\|_{L^2(Y)}$ which yields the claimed result in combination with $\|\Delta \eta\|_{L^2(Y)}\leq \frac{n}{\pi^2}C_{\delta}\|f\|_{L^2(Y)}$.
\end{proof}

\begin{proof}[Proof of Theorem \ref{Thm: per nondiv}]
(i,ii) If $v\in H^2_{\mathrm{per}}(Y)$ solves \eqref{v equation}, then $(f,r)_{L^2(Y)} = (-A:D^2 v,r)_{L^2(Y)}= 0$. Now suppose $(f,r)_{L^2(Y)} = 0$. By the Lax--Milgram theorem, in view of Lemma \ref{Lmm: b_mu}, there exists a unique $v\in H^2_{\mathrm{per},0}(Y)$ such that  $b_0(v,\fhi)=(\gamma f,-\Delta \fhi)_{L^2(Y)}$ for any $\fhi \in H^2_{\mathrm{per},0}(Y)$. Equivalently, since $-\Delta:H^2_{\mathrm{per},0}(Y)\rightarrow L^2_{\mathrm{per},0}(Y)$ is a bijection, we have that
\begin{align}\label{pf of per A:D^2 1}
(-\gamma A:D^2 v,\phi)_{L^2(Y)} = (\gamma f,\phi)_{L^2(Y)}\qquad \forall \phi\in L^2_{\mathrm{per},0}(Y).
\end{align}
We claim that this is equivalent to
\begin{align}\label{pf of per A:D^2 2}
(-\gamma A:D^2 v,\phi)_{L^2(Y)} = (\gamma f,\phi)_{L^2(Y)}\qquad \forall \phi\in L^2_{\mathrm{per}}(Y).
\end{align}
Indeed, to see that \eqref{pf of per A:D^2 1} implies \eqref{pf of per A:D^2 2}, we write $\phi\in  L^2_{\mathrm{per}}(Y)$ as $\phi = \phi_1 + \phi_2$ with $\phi_1:= \phi - c\tilde{r}\in L^2_{\mathrm{per},0}(Y)$ and $\phi_2:=c\tilde{r}$, where $c:=\int_Y \phi$ and $\tilde{r}:=(\frac{1}{\gamma},r)_{L^2(Y)}^{-1} \frac{r}{\gamma}\in L^2_{\mathrm{per}}(Y)$ is the invariant measure to $\tilde{A}:=\gamma A$, and use that $(-\tilde{A}:D^2 v,\phi_2)_{L^2(Y)} = 0 = c(\frac{1}{\gamma},r)_{L^2(Y)}^{-1} (f,r)_{L^2(Y)}  = (\gamma f,\phi_2)_{L^2(Y)}$. Noting that \eqref{pf of per A:D^2 2} is equivalent to $-\gamma A:D^2 v = \gamma f$ a.e. in $\R^n$, which in turn is equivalent to $-A:D^2 v = f$ a.e. in $\R^n$ since $\gamma\geq \frac{\lambda}{\Lambda^2}>0$ a.e. in $\R^n$, we immediately obtain (i)--(ii).  

(iii) Note that $v_0$ is the unique element in $H^2_{\mathrm{per},0}(Y)$ such that $b_0(v_0,\fhi)=-(\gamma f,\Delta \fhi)_{L^2(Y)}$ for any $\fhi\in H^2_{\mathrm{per}}(Y)$. Hence, $\|\Delta v_0\|_{L^2(Y)}^2 \leq C_{\delta}b_0(v_0,v_0) = -C_{\delta}(\gamma f,\Delta v_0)_{L^2(Y)}\leq \frac{\Lambda}{\lambda^2}C_{\delta}\|f\|_{L^2(Y)}\|\Delta v_0\|_{L^2(Y)}$ by Lemma \ref{Lmm: b_mu} and Remark \ref{Rk: Cordes holds}.
\end{proof}

\subsection{Proofs for Section \ref{Sec: conv result}}

\begin{proof}[Proof of Theorem \ref{Thm: Conv}]
First, recall that by the uniform $H^2$-bound \eqref{uniform H2 bd} there exists a function $u\in H^2(\Omega)$ with $u-g\in H^1_0(\Omega)$ such that, upon passing to a subsequence, \eqref{limit for subs} holds. Passing to the limit in \eqref{divform eqn} and using \eqref{lim of M} we find that $u$ is a solution to \eqref{u problem}. Noting that 
\begin{align*}
\lambda\lvert \xi\rvert^2 = \lambda\lvert \xi\rvert^2 (r,1)_{L^2(Y)}  \leq (r,A\xi\cdot \xi)_{L^2(Y)} =  \bar{A}\xi\cdot \xi  \leq \Lambda\lvert \xi\rvert^2 (r,1)_{L^2(Y)} = \Lambda\lvert \xi\rvert^2\quad\forall \xi\in \R^n,
\end{align*}
where we have used that $r\geq 0$ almost everywhere and $A\in \mathcal{M}(\lambda,\Lambda)$, we see that $u$ is the unique solution in $H^2(\Omega)$ to \eqref{u problem}. We conclude that the whole sequence $(u_{\eps})_{\eps > 0}$ convergences weakly in $H^2(\Omega)$ to $u$.
\end{proof}

\subsection{Proofs for Section \ref{Sec: corrector est}}

\begin{proof}[Proof of Theorem \ref{Thm: corrI}]
We have $r_{\eps}:=u_{\eps} - u - \eps^2 ( \eta_{\eps} + \theta_{\eps})\in H^2(\Omega)\cap H^1_0(\Omega)$ and $L_{\eps}r_{\eps} = \eps A^{\eps}:P_{\eps}$. By the proof of Theorem \ref{Thm: Well-posedness of uepsproblem} and using that $\gamma \lvert A\rvert = \frac{\mathrm{tr}(A)}{\lvert A\rvert} \leq  \frac{n\Lambda}{\sqrt{n}\lambda} = \sqrt{n}\frac{\Lambda}{\lambda}$ almost everywhere, we then have the bound
\begin{align*}
\|r_{\eps}\|_{H^2(\Omega)}\leq C_{\delta}\tilde{c}_0 \|\gamma^{\eps}L_{\eps}r_{\eps}\|_{L^2(\Omega)}\leq \eps C_{\delta}\tilde{c}_0\|\gamma\lvert A\rvert \|_{L^{\infty}(\R^n)} \|P_{\eps}\|_{L^2(\Omega)}\leq \eps  C_{\delta}\tilde{c}_0\sqrt{n}\frac{\Lambda}{\lambda} \|P_{\eps}\|_{L^2(\Omega)},
\end{align*}
where $C_{\delta}:=(1-\sqrt{1-\delta})^{-1}$ and $\tilde{c}_0 = \tilde{c}_0(\mathrm{diam}(\Omega),n)>0$ is as in the proof of Theorem \ref{Thm: Well-posedness of uepsproblem}. The claim follows with $c_0:= \tilde{c}_0\sqrt{n}$.
\end{proof}

\begin{proof}[Proof of Theorem \ref{Thm: CorrII}]
We set $d_{\eps}:=r_{\eps}+2\eps( z_{\eps}-\eps^2(\tilde{\eta}_{\eps}+\tilde{\theta}_{\eps}))$ where $r_{\eps}:=u_{\eps} - u - \eps^2 ( \eta_{\eps} + \theta_{\eps})$. We have that $d_{\eps}\in H^2(\Omega)\cap H^1_0(\Omega)$ and, writing $c_j^{kl}:=(Ae_j\cdot \nabla v_{kl},r)_{L^2(Y)}$ for $1\leq j,k,l\leq n$, there holds
\begin{align*}
L_{\eps} d_{\eps} &= \eps A^{\eps}:P_{\eps}+2\eps\left(-\sum_{j,k,l=1}^n c_j^{kl}\partial^3_{jkl} u + \eps^2 A^{\eps}:D^2\tilde{\eta}_{\eps}   \right)\\
&= \eps A^{\eps}:P_{\eps}-2\eps\sum_{j,k,l=1}^n [c_j^{kl}-A:D^2 \chi_{jkl}]^{\eps}\partial^3_{jkl} u + 2\eps^2 \sum_{i,j=1}^n a_{ij}^{\eps}(2[\partial_i X]^{\eps}:D^3(\partial_j u) + \eps X^{\eps}:D^3(\partial^2_{ij}u)  )    \\
&= \eps^2 A^{\eps}:Q_{\eps},
\end{align*}
where we have used that $\chi_{jkl}=T(A,Ae_j\cdot \nabla v_{kl})$ and $L_{\eps}r_{\eps} = \eps A^{\eps}:P_{\eps}$ with $P_{\eps}$ defined as in Theorem \ref{Thm: corrI}. By the proof of Theorem \ref{Thm: Well-posedness of uepsproblem} and using that $\gamma \lvert A\rvert = \frac{\mathrm{tr}(A)}{\lvert A\rvert} \leq  \frac{n\Lambda}{\sqrt{n}\lambda} = \sqrt{n}\frac{\Lambda}{\lambda}$ almost everywhere, we then have the bound
\begin{align*}
\|d_{\eps}\|_{H^2(\Omega)}\leq C_{\delta}\tilde{c}_0 \|\gamma^{\eps} L_{\eps}d_{\eps}\|_{L^2(\Omega)}\leq \eps^2 C_{\delta}\tilde{c}_0\|\gamma\lvert A\rvert \|_{L^{\infty}(\R^n)} \|Q_{\eps}\|_{L^2(\Omega)}\leq \eps^2  C_{\delta}\tilde{c}_0\sqrt{n}\frac{\Lambda}{\lambda} \|Q_{\eps}\|_{L^2(\Omega)},
\end{align*}
where $C_{\delta}:=(1-\sqrt{1-\delta})^{-1}$ and $\tilde{c}_0 = \tilde{c}_0(\mathrm{diam}(\Omega),n)>0$ is as in the proof of Theorem \ref{Thm: Well-posedness of uepsproblem}. The claim follows with $c_0:= \tilde{c}_0\sqrt{n}$.
\end{proof}

\subsection{Proofs for Section \ref{Sec: Type}}

The proofs of Theorem \ref{Thm: C+aM} and Theorem \ref{Thm: type-eps^2 diag} are almost identical to the proofs in \cite{GST22} and hence omitted.

\subsection{Proofs for Section \ref{Sec: Num methods}}

\begin{proof}[Proof of Lemma \ref{Lmm: H2 approx rtilde}]
In view of Lemma \ref{Lmm: b_mu}, we note that existence and uniqueness of $\psi_h\in \Psi_h$ satisfying \eqref{psi h prob} follows from the Lax--Milgram theorem. Noting that $\psi-\psi_h\in H^2_{\mathrm{per},0}(Y)$ and using Lemma \ref{Lmm: b_mu}, Galerkin orthogonality, and \eqref{boundedness b_0}, we have for any $\fhi_h\in \Psi_h$ that
\begin{align*}
C_{\delta}^{-1}\|\Delta(\psi-\psi_h)\|_{L^2(Y)}^2 &\leq b_0(\psi - \psi_h,\psi-\psi_h) \\ &= b_0(\psi - \fhi_h,\psi-\psi_h)\leq \sqrt{n}\frac{\Lambda}{\lambda}\|\Delta(\psi - \fhi_h)\|_{L^2(Y)}\|\Delta(\psi-\psi_h)\|_{L^2(Y)},
\end{align*}
which yields the claimed result. Note that $\tilde{r}-\tilde{r}_h = -\Delta(\psi-\psi_h)$ since $\tilde{r}_h = 1-\Delta \psi_h$ and $\tilde{r} = 1-\Delta \psi$ by Theorem \ref{Thm: invmeas}.
\end{proof}

\begin{proof}[Proof of Lemma \ref{Lmm: Approx rtilde mixed}]
First, we note that for any $w\in H^1_{\mathrm{per}}(Y;\R^n)$ there holds
\begin{align*}
\|\nabla\cdot w\|_{L^2(Y)}^2 + \frac{1}{2}\|Dw-Dw^{\mathrm{T}}\|_{L^2(Y)}^2 = \|Dw\|_{L^2(Y)}^2 + \|\nabla\cdot w\|_{L^2(Y)}^2 - (Dw,Dw^{\mathrm{T}})_{L^2(Y)} = \|Dw\|_{L^2(Y)}^2,
\end{align*}
where the second equality follows from integration by parts and a density argument. We find that
\begin{align}\label{coer a}
\begin{split}
b(w,w) &= \|D w\|_{L^2(Y)}^2 + ((\tilde{A}-I_n):Dw,\nabla\cdot w)_{L^2(Y)} \\
&\geq \|D w\|_{L^2(Y)}^2 - \sqrt{1-\delta}\|\nabla\cdot w\|_{L^2(Y)}\|Dw\|_{L^2(Y)} \geq C_{\delta}^{-1}\|Dw\|_{L^2(Y)}^2
\end{split}
\end{align}
for any $w\in H^1_{\mathrm{per}}(Y;\R^n)$, where $C_{\delta} = (1-\sqrt{1-\delta})^{-1}$. Further, using that $\|\gamma\lvert A\rvert\|_{L^{\infty}(\R^n)}\leq \sqrt{n}\frac{\Lambda}{\lambda}$ and setting $C_{n,\lambda,\Lambda}:=1+\sqrt{n}\frac{\Lambda}{\lambda}$, we have for any $w\in H^1_{\mathrm{per}}(Y;\R^n)$ that
\begin{align}\label{boun a}
\begin{split}
\lvert b(w,\tilde{w})\rvert &\leq \sqrt{n}\frac{\Lambda}{\lambda} \|Dw\|_{L^2(Y)}\|\nabla\cdot \tilde{w}\|_{L^2(Y)} + \frac{1}{2}\|Dw-Dw^{\mathrm{T}}\|_{L^2(Y)}\|D\tilde{w}-D\tilde{w}^{\mathrm{T}}\|_{L^2(Y)}\\ &\leq C_{n,\lambda,\Lambda}\|Dw\|_{L^2(Y)}\|D\tilde{w}\|_{L^2(Y)},
\end{split}
\end{align}
where we have used in the second inequality that $\frac{1}{2}\|Dw-Dw^{\mathrm{T}}\|_{L^2(Y)}^2\leq \|Dw\|_{L^2(Y)}^2$ for any $w\in H^1_{\mathrm{per}}(Y;\R^n)$. Since $w\mapsto \|Dw\|_{L^2(Y)}$ defines a norm on $H^1_{\mathrm{per},0}(Y;\R^n)$, it follows that $b$ defines a bounded coercive bilinear form on $H^1_{\mathrm{per},0}(Y;\R^n)$. By the Lax--Milgram theorem, using that $w\mapsto (\gamma,A:Dw)_{L^2(Y)}$ defines a bounded linear functional on $H^1_{\mathrm{per},0}(Y;\R^n)$, we have that there exist unique $p\in H^1_{\mathrm{per},0}(Y;\R^n)$ and $p_h\in P_h$ such that
\begin{align*}
b(w,p) = (\gamma,A:Dw)_{L^2(Y)}\quad\forall w\in H^1_{\mathrm{per},0}(Y;\R^n),\qquad b(w_h,p_h) = (\gamma,A:Dw_h)_{L^2(Y)}\quad\forall w_h\in P_h.
\end{align*}
Noting that for any $\fhi\in H^2_{\mathrm{per}}(Y)$ we have that $\nabla \fhi\in H^1_{\mathrm{per},0}(Y;\R^n)$, we see that
\begin{align*}
(1-\nabla\cdot p,-\tilde{A}:D^2\fhi)_{L^2(Y)} = b(\nabla \fhi,p) - (\gamma,A:D^2 \fhi)_{L^2(Y)}  = 0\qquad\forall\fhi\in H^2_{\mathrm{per}}(Y).
\end{align*}
Therefore, using that $\nabla\cdot p\in L^2_{\mathrm{per},0}(Y)$, we have that $\tilde{r} = 1-\nabla\cdot p$. It only remains to prove (iii). Noting that $p-p_h\in H^1_{\mathrm{per},0}(Y;\R^n)$ and using \eqref{coer a}, Galerkin orthogonality, and \eqref{boun a}, we have that
\begin{align*}
C_{\delta}^{-1}\|D(p-p_h)\|_{L^2(Y)}^2 &\leq b(p-p_h,p-p_h) \\ &= b(p - w_h,p-p_h)\leq C_{n,\lambda,\Lambda}\|D(p - w_h)\|_{L^2(Y)}\|D(p-p_h)\|_{L^2(Y)}
\end{align*}
for any $w_h\in P_h$. Since $\|\nabla\cdot (p-p_h)\|_{L^2(Y)}\leq \|D(p-p_h)\|_{L^2(Y)}$, this yields the claimed result. Note that $\tilde{r}-\tilde{r}_h = -\nabla\cdot(p-p_h)$ since $\tilde{r}_h = 1-\nabla\cdot p_h$ and $\tilde{r} = 1-\nabla\cdot p$. 
\end{proof}

\begin{proof}[Proof of Theorem \ref{Thm: approx of r}]
First, recall that $r = c^{-1}\gamma \tilde{r}$ with $c:=(\gamma,\tilde{r})_{L^2(Y)}\in [ \frac{\lambda}{\Lambda^2},\frac{\Lambda}{\lambda^2}]$, where we have used the bounds on $\gamma$ from Remark \ref{Rk: Cordes holds}, $\int_Y \tilde{r} = 1$, and $\tilde{r}\geq 0$ almost everywhere. Noting that $\lvert c - c_h\rvert \leq \frac{\Lambda}{\lambda^2} \|\tilde{r}-\tilde{r}_h\|_{L^2(Y)}$, we see that $c_h\rightarrow c$ and hence, $r_h$ is well-defined for $h>0$ sufficiently small. Further, for $h>0$ sufficiently small, there holds
\begin{align*}
\|r-r_h\|_{L^2(Y)} = \| c^{-1}\gamma \tilde{r}-c_h^{-1}\gamma \tilde{r}_h\|_{L^2(Y)}\leq c_h^{-1}\|\gamma(\tilde{r}-\tilde{r}_h)\|_{L^2(Y)} + \lvert c^{-1}-c_h^{-1}\rvert\, \|\gamma \tilde{r}\|_{L^2(Y)},
\end{align*}
and using that $\gamma \tilde{r} = cr$, we find
\begin{align*}
\|r-r_h\|_{L^2(Y)} &\leq c_h^{-1}\left(\|\gamma(\tilde{r}-\tilde{r}_h)\|_{L^2(Y)} + \lvert c-c_h\rvert\, \|r\|_{L^2(Y)}\right)\\ &\leq \frac{\Lambda}{\lambda^2}c_h^{-1}\left(\|\tilde{r}-\tilde{r}_h\|_{L^2(Y)} + \frac{\Lambda}{\lambda^2}\|\tilde{r}-\tilde{r}_h\|_{L^2(Y)} \|r\|_{L^2(Y)}\right) 
\end{align*} 
Noting that $c_h^{-1}\leq \frac{\Lambda^2}{\lambda}+1$ for $h> 0$ sufficiently small completes the proof.
\end{proof}

\begin{proof}[Proof of Lemma \ref{Lmm: Approx Abar}]
We have $\lvert \bar{A}-\bar{A}_h\rvert = \left\lvert \int_Y (r-r_h)A\right\rvert \leq \int_Y \lvert r-r_h\rvert \lvert A\rvert \leq \sqrt{n}\Lambda \|r-r_h\|_{L^1(Y)}$, and the claim follows.
\end{proof}

\begin{proof}[Proof of Lemma \ref{Lmm: Approx u}]
The proof of this result is analogous to the proof of Lemma 3.4 in \cite{CSS20} and hence omitted.
\end{proof}

\section*{Acknowledgments}

The author gratefully acknowledges helpful conversations with Professor Yves Capdeboscq (Universit\'{e} de Paris) and Professor Hung V. Tran (University of Wisconsin Madison) during the preparation of this work.

\bibliographystyle{plain}
\bibliography{ref_LinftyA}

\begin{thebibliography}{10}

\bibitem{AFL22}
S.~Armstrong, B.~Fehrman, and J.~Lin.
\newblock Green function and invariant measure estimates for nondivergence form
  elliptic homogenization, 2022.
\newblock arXiv:2211.13279 [math.AP].

\bibitem{AL17}
S.~Armstrong and J.~Lin.
\newblock Optimal quantitative estimates in stochastic homogenization for
  elliptic equations in nondivergence form.
\newblock {\em Arch. Ration. Mech. Anal.}, 225(2):937--991, 2017.

\bibitem{AS14}
S.~N. Armstrong and C.~K. Smart.
\newblock Quantitative stochastic homogenization of elliptic equations in
  nondivergence form.
\newblock {\em Arch. Ration. Mech. Anal.}, 214(3):867--911, 2014.

\bibitem{AL89}
M.~Avellaneda and F.-H. Lin.
\newblock Compactness methods in the theory of homogenization. {II}.
  {E}quations in nondivergence form.
\newblock {\em Comm. Pure Appl. Math.}, 42(2):139--172, 1989.

\bibitem{Beb03}
M.~Bebendorf.
\newblock A note on the {P}oincar\'{e} inequality for convex domains.
\newblock {\em Z. Anal. Anwendungen}, 22(4):751--756, 2003.

\bibitem{BLP11}
A.~Bensoussan, J.-L. Lions, and G.~Papanicolaou.
\newblock {\em Asymptotic analysis for periodic structures}.
\newblock AMS Chelsea Publishing, Providence, RI, 2011.
\newblock Corrected reprint of the 1978 original.

\bibitem{BS17}
V.~I. Bogachev and S.~V. Shaposhnikov.
\newblock Integrability and continuity of solutions to double divergence form
  equations.
\newblock {\em Ann. Mat. Pura Appl. (4)}, 196(5):1609--1635, 2017.

\bibitem{CM09}
F.~Camilli and C.~Marchi.
\newblock Rates of convergence in periodic homogenization of fully nonlinear
  uniformly elliptic {PDE}s.
\newblock {\em Nonlinearity}, 22(6):1481--1498, 2009.

\bibitem{CSS20}
Y.~Capdeboscq, T.~Sprekeler, and E.~S\"{u}li.
\newblock Finite element approximation of elliptic homogenization problems in
  nondivergence-form.
\newblock {\em ESAIM Math. Model. Numer. Anal.}, 54(4):1221--1257, 2020.

\bibitem{Cor56}
H.~O. Cordes.
\newblock \"{U}ber die erste {R}andwertaufgabe bei quasilinearen
  {D}ifferentialgleichungen zweiter {O}rdnung in mehr als zwei {V}ariablen.
\newblock {\em Math. Ann.}, 131:278--312, 1956.

\bibitem{ES08}
B.~Engquist and P.~E. Souganidis.
\newblock Asymptotic and numerical homogenization.
\newblock {\em Acta Numer.}, 17:147--190, 2008.

\bibitem{FO18}
C.~Finlay and A.~M. Oberman.
\newblock Approximate homogenization of convex nonlinear elliptic {PDE}s.
\newblock {\em Commun. Math. Sci.}, 16(7):1895--1906, 2018.

\bibitem{FGP23}
P.~Freese, D.~Gallistl, D.~Peterseim, and T.~Sprekeler.
\newblock Computational multiscale methods for nondivergence-form elliptic
  partial differential equations, 2023.
\newblock arXiv:2211.13731 [math.NA].

\bibitem{FO09}
B.~D. Froese and A.~M. Oberman.
\newblock Numerical averaging of non-divergence structure elliptic operators.
\newblock {\em Commun. Math. Sci.}, 7(4):785--804, 2009.

\bibitem{Gal17b}
D.~Gallistl.
\newblock Variational formulation and numerical analysis of linear elliptic
  equations in nondivergence form with {C}ordes coefficients.
\newblock {\em SIAM J. Numer. Anal.}, 55(2):737--757, 2017.

\bibitem{GSS21}
D.~Gallistl, T.~Sprekeler, and E.~S\"{u}li.
\newblock Mixed {F}inite {E}lement {A}pproximation of {P}eriodic
  {H}amilton--{J}acobi--{B}ellman {P}roblems {W}ith {A}pplication to
  {N}umerical {H}omogenization.
\newblock {\em Multiscale Model. Simul.}, 19(2):1041--1065, 2021.

\bibitem{GST22}
X.~Guo, T.~Sprekeler, and H.~V. Tran.
\newblock Characterizations of diffusion matrices in homogenization of elliptic
  equations in nondivergence-form, 2022.
\newblock arXiv:2201.01974 [math.AP].

\bibitem{GT22}
X.~Guo and H.~V. Tran.
\newblock Stochastic integrability of heat-kernel bounds for random walks in a
  balanced random environment, 2022.
\newblock arXiv:2209.14293 [math.PR].

\bibitem{GT23}
X.~Guo and H.~V. Tran.
\newblock Optimal convergence rates in stochastic homogenization in a balanced
  random environment, 2023.
\newblock arXiv:2301.01267 [math.PR].

\bibitem{GTY20}
X.~Guo, H.~V. Tran, and Y.~Yu.
\newblock Remarks on optimal rates of convergence in periodic homogenization of
  linear elliptic equations in non-divergence form.
\newblock {\em SN Partial Differ. Equ. Appl.}, 1(15), 2020.

\bibitem{Hec12}
F.~Hecht.
\newblock New development in freefem++.
\newblock {\em J. Numer. Math.}, 20(3-4):251--265, 2012.

\bibitem{JKO94}
V.~V. Jikov, S.~M. Kozlov, and O.~A. Ole\u{\i}nik.
\newblock {\em Homogenization of differential operators and integral
  functionals}.
\newblock Springer-Verlag, Berlin, 1994.
\newblock Translated from the Russian.

\bibitem{JZ23}
W.~Jing and Y.~Zhang.
\newblock On the periodic homogenization of elliptic equations in
  non-divergence form with large drifts, 2023.
\newblock arXiv:2302.01157 [math.AP].

\bibitem{KS22}
E.~L. Kawecki and T.~Sprekeler.
\newblock Discontinuous {G}alerkin and {C}0-{IP} finite element approximation
  of periodic {H}amilton-{J}acobi-{B}ellman-{I}saacs problems with application
  to numerical homogenization.
\newblock {\em ESAIM Math. Model. Numer. Anal.}, 56(2):679--704, 2022.

\bibitem{KL16}
S.~Kim and K.-A. Lee.
\newblock Higher order convergence rates in theory of homogenization: equations
  of non-divergence form.
\newblock {\em Arch. Ration. Mech. Anal.}, 219(3):1273--1304, 2016.

\bibitem{Saf10}
M.~V. Safonov.
\newblock Non-divergence elliptic equations of second order with unbounded
  drift.
\newblock In {\em Nonlinear partial differential equations and related topics},
  volume 229 of {\em Amer. Math. Soc. Transl. Ser. 2}, pages 211--232. Amer.
  Math. Soc., Providence, RI, 2010.

\bibitem{SS13}
I.~Smears and E.~S\"{u}li.
\newblock Discontinuous {G}alerkin finite element approximation of
  nondivergence form elliptic equations with {C}ord\`es coefficients.
\newblock {\em SIAM J. Numer. Anal.}, 51(4):2088--2106, 2013.

\bibitem{ST21}
T.~Sprekeler and H.~V. Tran.
\newblock Optimal convergence rates for elliptic homogenization problems in
  nondivergence-form: analysis and numerical illustrations.
\newblock {\em Multiscale Model. Simul.}, 19(3):1453--1473, 2021.

\bibitem{Tal65}
G.~Talenti.
\newblock Sopra una classe di equazioni ellittiche a coefficienti misurabili.
\newblock {\em Ann. Mat. Pura Appl. (4)}, 69:285--304, 1965.

\end{thebibliography}

\end{document}